\DeclareMathAlphabet{\matheurm}{U}{eur}{m}{n}
\newtheorem{thmA}{Theorem}
\numberwithin{equation}{section} 
\newtheorem{theorem}{Theorem}[section]
\newtheorem{proposition}[theorem]{Proposition}
\newtheorem{lemma}[theorem]{Lemma}
\newtheorem{question}[theorem]{Question}
\newtheorem{conjecture}[theorem]{Conjecture}   
\newtheorem*{basic-question}{Basic Question}
\theoremstyle{remark}
\newtheorem{remark}[theorem]{Remark}
\newtheorem{example}[theorem]{Example}
\theoremstyle{definition}
\newtheorem{definition}[theorem]{Definition}
\def\Z{\mathbb Z}
\def\H{\mathbb H}
\def\C{\mathbb C}
\def\aut{{\rm{Aut}}}
\def\norm{\unlhd}
\def\tr{{\rm{tr}}}
\def\PSL{{\rm{PSL}}}
\def\D{\Delta}  
\def\Z{\mathbb Z}
\def\R{\mathbb R}
\def\Q{\mathbb Q}
\def\G{\Gamma}
\def\La{\Lambda}
\def\ssm{\smallsetminus}
\def\e{\varepsilon}
\def\SL{{\rm{SL}}}
\def\-{\overline}
\def\wh{\widehat}
\def\onto{\twoheadrightarrow}
\def\G{\Gamma}
\def\<{\langle}
\def\>{\rangle}
\begin{document}

\title{Chasing finite shadows of infinite groups through geometry}

\author[Martin R.   Bridson]{Martin R.  ~Bridson}
\address{Mathematical Institute\\
Andrew Wiles Building\\
Woodstock Road\\
Oxford,   OX2 6GG}
\email{bridson@maths.ox.ac.uk}

\date{Tuesday 8 April 2025, Stanford} 

\keywords{finitely presented groups,   finite quotients,   profinite rigidity}

\subjclass[2020]{20E18,  20E26,  20F67,   57K32}

\thanks{For the purpose of open access, the author has applied a CC BY public copyright licence to any author accepted manuscript arising from this submission.}

\begin{abstract}   There are many situations in geometry and group theory where it is natural,   convenient or necessary to explore infinite groups via their actions on finite objects – i.e.   via the finite quotients of the group.   But how much understanding can one really gain about an infinite group by examining its finite images? Which properties of the group can one recognise,   and when does the set of finite images determine the group completely? How hard is it to decide what the finite images of a given infinite group are?
 
These notes follow my plenary lecture at the ECM in Sevilla,   July 2024.   The goal of the lecture was 
to sketch some of the rich history of the preceding problems and to present results that illustrate how the field
surrounding these questions has been transformed in recent years by input from low-dimensional topology and the study of non-positively curved spaces.  
\end{abstract}

\maketitle

\def\<{\langle}
\def\>{\rangle}
\def\La{\Lambda}
\def\onto{\twoheadrightarrow}
\def\G{\Gamma}
\def\Free{{\rm{Free}}}
\def\e{\varepsilon}

\newcommand{\magenta}[1]{{\color{magenta} #1}}

\newcommand{\co}{\colon \thinspace}    
\newcommand{\IP}{{\rm{IP}}} 
\newcommand{\IPk}{{\rm{IP}}^{(k)}}  
\newcommand{\FillM}{{\rm{Fill}_0^M}}
\newcommand{\fa}{{ {magenta}{\rm{FArea}}}} 
\newtheorem{conjectures}[theorem]{Conjectures} 
 
\def\a{\alpha}
\def\b{\beta}
\def\c{\gamma}
\def\d{\delta}

\newtheorem{vision}[theorem]{Bill Thurston's Vision}

\numberwithin{equation}{section}


\newcommand{\CG}{{\mathcal{C}\mathcal{G}}}

\def\id{{\rm{id}}}
\def\dom{{\rm{dom}}}
\def\ran{{\rm{ran}}}

\newcommand{\xycomsquare}[8]                   
{\xymatrix{#1 \ar[r]^{#2} \ar[d]^{#4} &
    #3 \ar[d]^{#5}  \\
    #6\ar[r]^{#7} & #8 }}

 \section{Introduction}

{\em To what extent can one understand an infinite group by studying only its finite images?} 
  This compelling question has re-emerged with different emphases throughout the history of group theory,   and in recent years it has been animated by a rich interplay with geometry and low-dimensional topology.    My purpose in the lecture on which these notes are based was to report on some highlights of this interplay.

Before engaging directly with the challenge of identifying and exploring the finite images of groups,  
we will step back and consider the following more primitive questions: how should we describe the groups that arise in nature,   and how might we go about the task of understanding a group that is presented to us?
The first question leads us to a 
discussion of group-presentations and the unlimited difficulty of extracting information from such presentations.  
A classical and insightful response to the second  question is that since a group is,   by definition,  
the mathematical object that captures the notion of symmetry,   one ought to explore groups by seeking objects they are symmetries of,   i.e.   one should study {\em actions} on various objects.   But what objects? 
There are many things that are worth trying,  but in the absence of  further information,   the most obvious objects to try first are finite sets,   and that
brings us back to the basic question that frames our study:   to what extent can one understand a group
by studying only its finite images?  

Our focus will be on finitely generated and finitely presented groups.  
We shall discuss how hard it is to identify the finite quotients of a given
group and we shall explore which properties of a group can be detected from its set of finite quotients.  
We will be particularly concerned with questions of {\em profinite rigidity},   i.e.   situations where an infinite group is 
completely determined by its finite quotients.   Groups that arise in low-dimensional geometry and topology
will play a central role in our discussion.   
For example,  
one might ask to what extent the group of symmetries of the paving of the hyperbolic plane shown in figure \ref{fig:334}
is distinguished from all other groups by it finite quotients.   
\begin{figure}[ht]
        \begin{center}
         \includegraphics[width=8.0cm, angle=90]{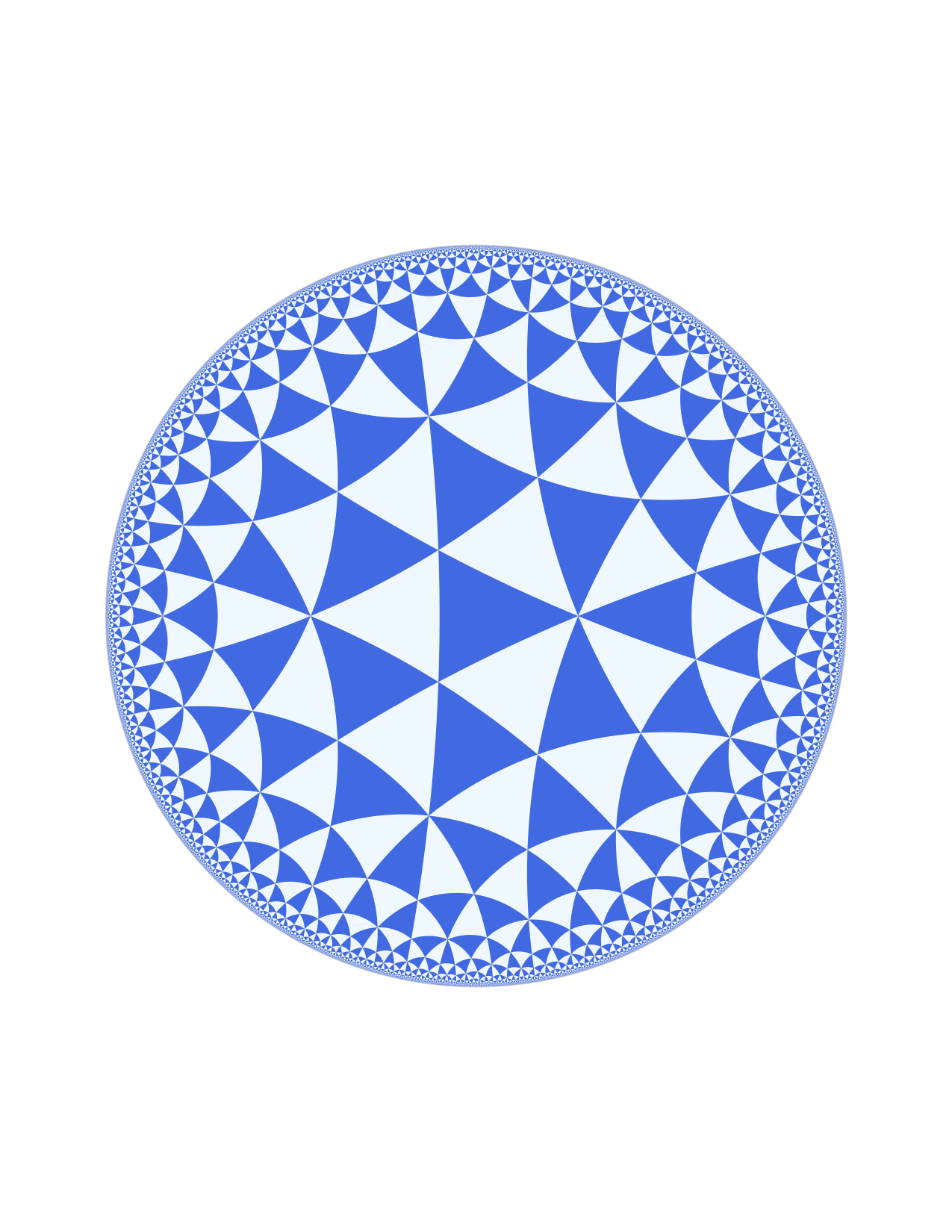}  
                      \end{center}
        \caption{$\Delta(3,  3,  4) = \< A,   B,   C \mid A^3=B^3=C^4=1=ABC\>$}\label{fig:334}
\end{figure} 

The narrative of the lecture on which these notes are based was built around the following sequence of results.   The first
serves to illustrate the difficulty of identifying the finite quotients of a finitely presented group.  

\begin{thmA}[\cite{BW}]\label{t-intro:BW}
There does not exist an algorithm that,   given an arbitrary finite presentation,   can determine whether 
or not the group presented has a non-trivial finite quotient.   
\end{thmA}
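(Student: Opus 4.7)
The strategy is to exhibit a computable reduction from a known undecidable problem to the problem of detecting non-trivial finite quotients. Concretely, I would aim to produce, uniformly in an instance of a fixed undecidable problem (say the halting of a Turing machine $M$, or equivalently the triviality of a word $w$ in a fixed finitely presented group $Q$ with unsolvable word problem), a finite presentation $\mathcal{P}(M)$ of a group $G(M)$ such that
\[ G(M) \text{ has a non-trivial finite quotient} \iff M \text{ halts}. \]
An algorithm deciding the right-hand side of Theorem~\ref{t-intro:BW} would then, by composition with the computable map $M \mapsto \mathcal{P}(M)$, solve the halting problem, giving the desired contradiction.

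To build $\mathcal{P}(M)$ I would combine two classical constructions. The first is the Rips construction, which converts an arbitrary finitely presented group $Q$ into a short exact sequence
\[ 1 \longrightarrow N \longrightarrow \Gamma \longrightarrow Q \longrightarrow 1, \]
with $\Gamma$ hyperbolic (and hence very well-behaved) while $N$ is finitely generated; the undecidable features of $Q$ are inherited by the pair $(\Gamma,N)$ and may now be manipulated inside a negatively curved setting. The second is the associated fibre product $P = \Gamma \times_Q \Gamma \leq \Gamma \times \Gamma$, which by the $1$-$2$-$3$ Theorem of Baumslag, Bridson, Miller and Short admits a finite presentation that can be computed from the input provided $Q$ is of type $\mathrm{F}_3$; by first replacing $Q$ with an aspherical Turing-complete avatar one may assume this hypothesis. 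Encoding $M$ by an additional relator or an auxiliary subgroup built from the computation tape then yields the presentation $\mathcal{P}(M)$.

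The principal obstacle, and the heart of the argument, is to show that the finite quotients of $G(M)$ faithfully detect the halting behaviour. Every finite quotient of $P$ arises from a pair of finite quotients of $\Gamma$ that agree modulo $N$, so controlling the finite images of $G(M)$ requires a structural analysis of compatible finite quotients of $\Gamma$ together with a Grothendieck-style separability argument for the distinguished subgroup. The task is to arrange matters so that when $M$ halts, the halting computation unlocks a non-trivial abelian quotient of $G(M)$ in an explicit way; while when $M$ does not halt, any putative non-trivial finite quotient would impose a compatibility between finite images of $\Gamma$ that forces the encoded subgroup to be separated, in turn producing a decision procedure for membership in $N$ and so contradicting the undecidable input. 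Establishing this dichotomy — essentially a statement that the finite quotients of the fibre product are no richer than what the computable data reveals — is where the geometry of hyperbolic groups and the algebra of profinite completions must work in concert, and it is this combined analysis that I would expect to consume the bulk of the proof.
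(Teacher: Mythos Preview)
Your reduction starts from the wrong undecidable problem, and the machinery you invoke pulls in the wrong direction. The Rips output $\Gamma$ is residually finite (by Wise and Agol), so the fibre product $P<\Gamma\times\Gamma$ always has a huge supply of finite quotients; indeed, the Rips/fibre-product/1-2-3 template is exactly what the paper uses \emph{elsewhere} to build Grothendieck pairs, in which $\widehat P\cong\widehat{\Gamma\times\Gamma}$ is as non-trivial as possible. Nothing in your sketch explains how adjoining relators ``encoding the computation tape'' could kill \emph{all} finite images of the resulting group precisely when $M$ fails to halt, and there is no evident mechanism for this: the ordinary word problem in $Q$ says nothing about finite quotients, so the dichotomy you need has no foothold.

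The paper's proof begins instead from Slobodskoi's theorem that the universal theory of finite groups is undecidable, which yields a finitely presented group $G$ in which one cannot decide, for a given word $w$, whether $w$ survives in \emph{some} finite quotient of $G$. This profinite word problem is already phrased in terms of finite images, which is what makes a reduction possible. One then builds groups $G_w$ with $\widehat{G_w}=1$ if and only if $w$ dies in $\widehat G$, using a refinement of the classical HNN/amalgam ``parlaying'' trick rather than Rips. The hard implication requires controlling the \emph{orders} of specified elements in finite quotients of carefully crafted virtually free groups; this is where Wise's omnipotence theorem and malnormality arguments enter, and those are the key technical ingredients your proposal does not identify.
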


Following \cite{BELS},   we shall discuss refinements of this result that describe the (non)existence of algorithms
that can identify quotients in particular families of finite simple groups,   for example the groups ${\rm{PSL}}_n(q)$
with either $n$ fixed and $q$ varying,  or vice versa.

The finite images of a group 
$\G$ form an inverse system indexed by the 
finite-index normal subgroups $N\norm \G$:
 if $N<M$ then $\G/N\onto \G/M$.   The {\em profinite completion} $\wh{\G}$ of $\G$
is the inverse limit of this system; it is a compact totally disconnected
topological group.    There is a natural map $\iota:\G\to\wh{\G}$ with dense image.    
Every homomorphism from $\G$ to a finite
group $Q$ extends,    via $\iota$,    to a map $\wh{\G}\to Q$
 with the same image,  and if $\G$ is finitely generated,  every finite quotient of $\wh{\G}$
arises in this way, so 
  $\G$ and $\wh{\G}$ have the same set of finite quotients.
For finitely generated groups $\G$ and $\Lambda$,   the set of finite images of $\G$ will be the same as the set of finite images of $\Lambda$ if and only if $\wh{\Lambda}$ and $\wh{\G}$ are isomorphic (as abstract or topological groups)
\cite{same-quots, NS};  thus,  for finitely generated groups,  the reader can read the statement $\wh{\Lambda}\cong \wh{\G}$ as ``$\G$ and $\Lambda$ have the same set of finite quotients".  

When discussing profinite rigidity, it is natural to restrict attention to groups $\G$ that are {\em residually finite}, i.e.~every $\gamma\in\G\ssm\{1\}$ has non-trivial image in some finite quotient of $\G$.
We shall consider three versions of profinite rigidity:
{\em absolute} profinite rigidity,   in which comparisons to all finitely generated, residually finite groups are made;
 {\em relative} profinite rigidity,  
whereby one compares groups within a class  -- e.g.   3-manifold groups,   nilpotent groups,    or lattices in Lie groups; 
and {\em Grothendieck} rigidity,   where one is concerned with the existence of pairs of non-isomorphic
groups $P\hookrightarrow \G$
such the inclusion induces an isomorphism of profinite completions.   A central result in this last setting is the following.  

\begin{thmA}[\cite{BG}]\label{t-intro:BG}
There exist residually finite (Gromov) {hyperbolic groups} $H$ 
 and  finitely presented subgroups  $P\hookrightarrow \G:=H\times H$ of infinite index,  
 with $P\not\cong\Gamma$,    such that $u:P\hookrightarrow \G$ induces an
isomorphism $\hat u:\hat P\to \hat\G$.   (Moreover, one can 
\end{thmA}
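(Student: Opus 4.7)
The plan is to realise $(P, \G)$ as a fibre-product pair inside a direct product $\G = H \times H$ of two copies of a residually finite hyperbolic group. There are three largely independent ingredients: a pathological finitely presented group $Q$ with no non-trivial finite quotients, a residually finite hyperbolic extension of $Q$, and the Platonov--Tavgen criterion that pins down when such a fibre product induces an isomorphism on profinite completions.

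First I would produce a non-trivial finitely presented group $Q$ of type $F_3$ satisfying $H_1(Q;\Z) = H_2(Q;\Z) = 0$ and $\wh Q = 1$. Higman-style four-generator, four-relator examples are the classical source of finitely presented groups with no finite quotients; promoting such an example to one that is simultaneously super-perfect and of type $F_3$ is a technical but reasonably standard exercise in presentation arithmetic. Next I would apply a residually finite, hyperbolic version of the Rips construction to $Q$, producing a short exact sequence
\[
1 \to N \to H \to Q \to 1
\]
with $H$ hyperbolic and residually finite, and $N$ finitely generated. The ordinary Rips construction delivers hyperbolicity but not residual finiteness; securing the latter requires a cubulated Rips construction together with the Wise/Haglund--Wise/Agol virtual-specialness technology. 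I expect this to be the main obstacle of the proof.

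With $H$ in hand, set $\G = H \times H$ and
\[
P \;=\; \{(h_1, h_2) \in H \times H : h_1 N = h_2 N\}.
\]
Since $Q$ is infinite, $P$ has infinite index in $\G$. The 1-2-3 Theorem of Baumslag--Bridson--Miller--Short, applied to the fact that $Q$ is of type $F_3$ and $N$ is finitely generated, guarantees that $P$ is finitely presented. Finally I would run the Platonov--Tavgen argument: given a finite quotient $\pi : \G \to F$, one analyses the five-term exact sequence in homology for the extension $1 \to N \times N \to \G \to Q \times Q \to 1$ together with its restriction to $P$; the combined vanishing of $H_1(Q)$, $H_2(Q)$ and $\wh Q$ forces $\pi$ to be determined by its restriction to $P$, so $\hat u$ is surjective, while injectivity is immediate from the fibre-product structure and the residual finiteness of $H$. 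That $P \not\cong \G$ is then forced by the infinite index together with a structural argument distinguishing $H \times H$ from its proper subdirect products (for instance via its lattice of normal subgroups, or via finiteness invariants of the kernels of its projections).
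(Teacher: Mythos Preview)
Your proposal is correct and follows essentially the same route as the paper: choose a finitely presented $Q$ with $\wh Q=1$, $H_2(Q;\Z)=0$ and type $F_3$ (the paper uses Higman's group $G_4$ or the group $B$, whose presentation $2$-complexes are already aspherical, so no ``promotion'' is needed), apply a residually finite Rips construction to obtain $1\to N\to H\to Q\to 1$ with $H$ hyperbolic, form the fibre product $P<H\times H$, invoke the 1-2-3 Theorem for finite presentability, and apply the Platonov--Tavgen criterion. The only nuance is that for mere residual finiteness of $H$ one can cite Wise's residually finite Rips construction directly, reserving the cubulated version plus Agol for the stronger $H<\mathrm{SL}(n,\Z)$ enhancement.
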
 
\vskip -0.2cm
We shall see in Section \ref{s:groth} that in this theorem one can assume $H<{\rm{SL}}(n,  \mathbb Z)$. 
\smallskip

Easy examples of residually finite groups that are profinitely rigid in the absolute sense include $\Z^d$,   but it is much more
difficult to find {\em full-sized} examples,   i.e.   groups that contain a non-abelian free subgroup and hence do not satisfy
any group law.   This is where   input from low-dimensional geometry and topology comes to the fore.  
The reader will recall that ${{\rm{PSL}}}(2,\R)$  and ${\rm{PSL}}(2,  \C)$ are the groups of orientation preserving isometries
of the hyperbolic plane $\H^2$ and hyperbolic 3-space $\H^3$, respectively.

\begin{thmA} [\cite{BMRS2}] \label{t-intro:BMRS} There are arithmetic lattices  in ${\rm{PSL}}(2,  \C)$
and ${{\rm{PSL}}}(2,\R)$  that are profinitely rigid in the absolute sense. 
\end{thmA}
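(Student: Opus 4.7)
The plan is to exhibit specific arithmetic lattices $\G$ — in the Kleinian case, the fundamental group of a small-volume closed hyperbolic $3$-manifold such as the Weeks manifold, and in the Fuchsian case, a cocompact arithmetic triangle group such as $\Delta(3,3,4)$ of Figure \ref{fig:334} — and to verify profinite rigidity directly for each such $\G$ by exploiting the tight interaction between its arithmetic structure and its set of finite quotients. Throughout I fix such a candidate $\G$ and consider an arbitrary finitely generated, residually finite group $\La$ with $\wh\La\cong\wh\G$; the goal is to prove $\La\cong\G$.

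The first step is to extract a rich family of finite images of $\G$ from its arithmetic description. Since $\G$ is commensurable with the norm-one units of a maximal order in a quaternion algebra $A$ over a number field $k$, reduction modulo prime ideals of $\mathcal{O}_k$ produces a compatible system of congruence quotients, typically of the form $\PSL(2,\mathbb F_q)$ or $\PSL(2,\mathcal{O}_k/\mathfrak p^n)$. Because $\wh\La\cong\wh\G$, all of these finite quotients together with the projection maps between them are inherited by $\La$.

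The main — and hardest — step is to reassemble this ``local'' data into a \emph{global} linear representation. Concretely, one needs a faithful representation $\rho\co\La\to\PSL(2,\C)$ (respectively $\PSL(2,\R)$) whose image is conjugate into $\G$. The strategy is to pass through the pro-$\ell$ congruence tower of $\G$ for a suitable rational prime $\ell$: the isomorphism $\wh\La\cong\wh\G$ yields a continuous homomorphism from an appropriate quotient of $\wh\La$ into $\PSL(2,\mathcal{O}_k\otimes\Z_\ell)$, and a Galois-rigidity argument, resting on the specific and rigid number-theoretic invariants of $\G$ (its invariant trace field and quaternion algebra), then allows one to descend this to a representation of $\La$ and to identify the trace field of $\rho(\La)$ with $k$. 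This pins $\rho(\La)$ inside the commensurability class of $\G$. I expect this reassembly to be the principal obstacle: there is no a priori reason for a coherent global representation of $\La$ to exist, and ensuring that its image sits inside $\G$ rather than in some larger arithmetic lattice commensurable with $\G$ requires delicate use of the specific finite presentation of $\G$.

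Having $\rho$ in hand, I would close the argument using strong rigidity — Mostow rigidity in dimension three, and the corresponding rigidity of cocompact arithmetic Fuchsian triangle groups in dimension two — to show that $\rho$ is faithful and that $\rho(\La)=\G$ rather than a proper subgroup. Faithfulness uses residual finiteness of $\La$ together with the compatibility of $\rho$ with the profinite data; the equality $\rho(\La)=\G$ follows by comparing invariants visible to $\wh\G$, such as minimal numbers of generators, first homology, or (for triangle groups) the essentially zero-dimensional $\PSL(2,\C)$-character variety. The success of the whole scheme depends on choosing $\G$ with especially small trace field and rigid presentation so that every step — from cataloguing finite quotients, through constructing $\rho$, to pinning down its image — leaves no slack.
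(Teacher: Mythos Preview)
Your outline has the right broad shape, but the endgame contains a genuine gap. Mostow rigidity cannot be invoked to show that $\rho:\Lambda\to\PSL(2,\C)$ is faithful with image $\G$, because Mostow rigidity applies only to lattices and you do not know that $\Lambda$ is a lattice --- that is essentially what you are trying to prove. The argument in \cite{BMRS1, BMRS2} runs in the opposite order: one first proves that $\rho$ is \emph{surjective} onto $\G$, and then the Hopf property for finitely generated profinite groups (any surjective endomorphism of $\wh\G$ is injective) forces $\wh\rho:\wh\Lambda\to\wh\G$ to be injective, whence $\rho$ is injective since $\Lambda$ is residually finite. Establishing surjectivity is much harder than comparing first homology or generator counts: one must rule out \emph{every} proper finitely generated subgroup $H<\G$ as a possible image by showing $\wh H\not\cong\wh\G$. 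In \cite{BMRS1} this required intricate case analysis on subgroups of small index in each specific example; it is now subsumed by Theorem~A of \cite{prasad}, whose proof draws on substantial 3-manifold topology (compact cores, tameness, virtual fibring via Agol and Wise, and cohomological goodness).

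Your construction of $\rho$ also departs from the paper's mechanism. The route in \cite{BMRS1} is not to climb a single pro-$\ell$ congruence tower and then ``descend'' to $\C$; rather, one first verifies that $\G$ is \emph{Galois rigid} --- its only Zariski-dense $\PSL(2,\C)$-characters are the Galois conjugates of the inclusion --- and then uses integrality of traces to transport these complex representations, via (non-continuous) field isomorphisms $\C\cong\overline{\Q_p}$, to bounded $p$-adic representations for every prime $p$. Bounded representations extend continuously to $\wh\G\cong\wh\Lambda$ and hence restrict to $\Lambda$, yielding a bijection of bounded Zariski-dense characters at every finite place. From this one deduces (\cite[Theorem~4.8]{BMRS1}) that $\Lambda$ is itself Galois rigid with the same trace field and quaternion algebra as $\G$, which is what produces the required Zariski-dense complex representation of $\Lambda$ with image inside $\G$ (or a small extension of it). Your pro-$\ell$ sketch yields an $\ell$-adic representation of $\Lambda$ but leaves the passage back to a Zariski-dense representation over $\C$ with the correct trace field unexplained; Galois rigidity, not the congruence tower alone, is the device that makes this step work.
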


Explicit examples of such lattices in ${\rm{PSL}}(2,  \C)$
 include the Bianchi group ${\rm{PSL}}(2,  \Z[\omega])$,   where $1+\omega +\omega^2=1$,  
as well as certain cocompact lattices of small covolume.   Examples in ${\rm{PSL}}(2, \R)$
include the triangle group $\Delta(3,  3,  4)$  portrayed in
figure 1.

The remarkable nature of 3-manifold groups and the beauty and utility of hyperbolic geometry and its generalisations
have been major themes in the study of profinite rigidity over recent years.  Another important theme that has emerged is the importance of finiteness properties: there can be a stark difference in the behaviour of  finitely generated and finitely presented groups   in this context.
The following recent result is perhaps
the best illustration of that theme.   A more refined version of this theorem,   which again relies on extensive input
concerning 3-dimensional manifolds,   will be described in the penultimate section of these notes.  

\begin{thmA}[\cite{BRS}]\label{t-intro:BRS} There are finitely presented,   residually finite groups $\G$ that
are profinitely rigid among finitely presented groups but contain infinitely many non-isomorphic finitely generated
subgroups $H\hookrightarrow\G$ such that $\wh{H}\cong\wh{\G}$
\end{thmA}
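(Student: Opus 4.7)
The plan is to combine absolute profinite rigidity for certain arithmetic hyperbolic 3-manifold groups (Theorem~\ref{t-intro:BMRS}) with Grothendieck-pair technology applied inside a single fixed group.

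First I would choose $\G=\pi_1(M)$ with $M$ a closed arithmetic hyperbolic 3-manifold from the family producing the absolutely profinitely rigid examples of Theorem~\ref{t-intro:BMRS}. Every finitely generated residually finite group $\Lambda$ with $\wh\Lambda\cong\wh\G$ is then forced to be isomorphic to $\G$; in particular the first half of the conclusion (profinite rigidity among finitely presented groups) is automatic.

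For the second half, note that any finitely presented $H\hookrightarrow\G$ with $\wh H\cong\wh\G$ must already be isomorphic to $\G$ by the preceding paragraph, so the subgroups $H$ we seek are necessarily finitely generated but \emph{not} finitely presented. To manufacture them I would adapt the Platonov--Tavgen / Bridson--Grunewald construction underpinning Theorem~\ref{t-intro:BG}. The standard version produces finitely presented Grothendieck subgroups inside $G\times G$ starting from a surjection $G\twoheadrightarrow Q$ onto a finitely presented superperfect group $Q$ (that is, with $H_1(Q,\Z)=H_2(Q,\Z)=0$); here, instead, the goal is to realise analogues of such subgroups \emph{inside} the given $\G$, exploiting the rich supply of immersed $\pi_1$-injective subsurfaces and subcomplexes in $M$. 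An infinite family $\G\twoheadrightarrow Q_n$ of surjections onto pairwise non-isomorphic superperfect quotients with finitely generated kernels then gives rise to infinitely many $H_n$, pairwise distinguishable by an invariant that tracks $Q_n$ such as the Euler characteristic of a classifying complex or the rank of the abelianisation of a canonical quotient.

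The main obstacle is precisely the construction of this family of superperfect quotients with finitely generated kernels inside one fixed profinitely rigid $\G$: each of the conditions (finite generation of the kernel, superperfectness of the quotient, distinctness of the $Q_n$, finite generation of the resulting $H_n$) is delicate in isolation, and arranging them simultaneously within a single $\G$ is where the deep input from 3-manifold topology — virtual cubulation, controlled hyperbolic Dehn filling, and the systematic construction of exotic quotients of fundamental groups of hyperbolic 3-manifolds — is essential.
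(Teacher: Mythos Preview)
Your plan contains a fatal contradiction at the very first step. You propose to take $\G$ to be one of the absolutely profinitely rigid hyperbolic 3-manifold groups from Theorem~\ref{t-intro:BMRS}. But absolute profinite rigidity means that \emph{every} finitely generated, residually finite $\Lambda$ with $\wh\Lambda\cong\wh\G$ is isomorphic to $\G$ --- not just the finitely presented ones. So the moment you choose such a $\G$, the existence of even a single finitely generated $H\not\cong\G$ with $\wh H\cong\wh\G$ is impossible, and the second half of the statement cannot hold. Your own paragraph ``any finitely presented $H\hookrightarrow\G$ \ldots\ must already be isomorphic to $\G$'' applies equally to finitely generated $H$, for exactly the reason you cite. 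Independently of this, your plan to build Grothendieck pairs \emph{inside} a hyperbolic 3-manifold group is blocked by the Grothendieck rigidity of such groups: Long--Reid, Sun, and more sharply \cite{prasad} show that no proper finitely generated subgroup $H<\pi_1M$ can satisfy $\wh H\cong\wh{\pi_1M}$.

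The paper's construction avoids both obstructions by working not with $\pi_1M$ but with a \emph{direct product} $\G=\pi_1M\times\pi_1M$, and by taking $M$ to be a carefully chosen Seifert fibred space over $S^2(p,q,r)$ for triples such as $(3,3,4)$ rather than a hyperbolic manifold. The factor $\pi_1M$ is shown to be absolutely profinitely rigid (extending the Galois-rigidity machinery to these central extensions of triangle groups), and this is leveraged to prove that any finitely generated $\Lambda$ with $\wh\Lambda\cong\wh{\G}$ must embed as a Grothendieck subgroup of $\G$. An extension of Theorem~\ref{t:BWclosed} then rules out finitely presented Grothendieck pairs in $\G$, giving rigidity among finitely presented groups. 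Finally, the infinitely many finitely generated Grothendieck pairs come from a fibre-product construction in $\pi_1M\times\pi_1M$ (in the spirit of Platonov--Tavgen and Bass--Lubotzky), which requires the Seifert invariants to be tuned so that $\pi_1M$ sits with finite index in the commutator subgroup of a group $G$ with $\wh G=1$ and $H_2(G,\Z)=0$. The passage to the direct product is not a convenience but the essential mechanism: it is precisely what separates the finitely presented and finitely generated regimes.
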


\noindent{\bf{Acknowledgements:}} I thank my longstanding co-author Alan Reid for  
the many insights that he has shared as we have explored profinite rigidity together.
I acknowledge with gratitude my debt to the late Fritz Grunewald, who enticed me into this field.
I also offer warm thanks to my other co-authors: 
Marston Conder,  David Evans,  Martin Liebeck,  Ben McReynolds,  Dan Segal, Ryan Spitler, and Henry Wilton.
In addition,  I thank Henry for his helpful comments on my first draft of these notes.

I was honoured by the invitation to speak at the 9th European Congress of Mathematics in Sevilla and I 
thank the organisers for making it such a stimulating and enjoyable event.  I am also grateful to
Stanford University and my hosts Tadashi Tokieda and Gregory Lieb for the hospitality 
that enabled me to finish this manuscript.

\section{Presenting Groups and Understanding Them}\label{s:presentation}

In a moment we will decipher the caption of figure 1 by recalling what it means to {\em present} 
a group.  
But first let me provoke the reader with  the assertion that group theory is not really a branch of algebra -- it
 belongs to the whole of mathematics,   just as numbers do.  

\subsection{Groups everywhere} What kind of mathematics do you want to do?  When we start doing formal mathematics,    settling down to lend precision to an intuition gleaned from examples,   we make precise
definitions that encapsulate the essence of the objects that we want to
study.    
As we start to manipulate
and compare our objects,   we have to decide what sort of maps  $X\to Y$ we will allow.  
If we are studying sets,   then any maps will do,   but if we are interested in
a linear problem and are studying vector spaces,   we are likely to restrict to linear or affine maps; if we are doing topology we will probably consider continuous maps; 
if we are modelling flexible geometric objects we might study Lipschitz maps,   {\em{etc. }.}   But 
no matter what the context,   it is 
likely that sooner or later we will want to understand the {\em automorphisms} of our objects $X$ -- i.e.   the set of 
invertible maps (of our chosen type) that preserve the defining features of $X$.   And,   no matter what the context,  
these automorphisms form a {\em group},   $\aut(X)$.    
According to one's nature,   one might then be drawn to the study of groups themselves.

\subsection{How should we capture groups?} 
Maintaining the spirit of doing mathematics in the wild,   let's consider how we might go about describing the symmetries of objects we encounter in nature. 
For an ethane molecule,  we might start by noting that it has six  (rigid, orientation-preserving) symmetries,
but more information is contained in the observation that
we need to combine two   basic operations -- $T$ (twist) and $F$ (flip) -- in order to get everything.  
We also note that doing $T$ three times or $F$ twice amounts to doing
nothing and,   after some more thought,   we might also observe that doing $F$ then $T$ then $F$ has the same effect  as doing $T$ twice.    In summary,   $\< T,   F \mid T^3=1=F^2,  \ FTF=T^2\>.  $
\begin{figure}[ht]
        \begin{center}
                   \includegraphics[width=2.4in]{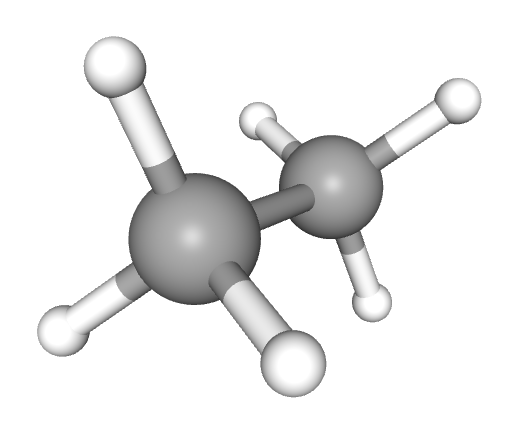}
                      \end{center} 
                       \caption{$\< T,   F \mid T^3=1=F^2,  \ FTF=T^2\>.  $} 
\end{figure}    
We want to formalise the fact that these notes describe the group completely.  
 
\subsection{Presentations of Groups} 

A {\em{presentation}} of a group $G$ is an expression of the form 
\begin{equation*}
G= \;  \langle {a_{1},   a_2,  \ldots}\mid
{r_1=1,  \,   r_2=1,  \dots }\rangle
\end{equation*}  
where the $r_j$ are finite words in the letters $a_i$ and their inverses.   The letters $a_i$ are called
{\em generators} and the $r_j$ are called {\em relators}  (or ``defining relations").  
We will be mostly concerned with the case where the lists $(a_i)$ and $(r_j)$ are finite,   in which
case the presentation is said to be finite and $G$ is said to be {\em finitely presented}.   

If $r_j$ is a concatenation of words $u_jv_j$,   it is sometimes convenient to write
$u_j=v_j^{-1}$ instead of $r_j=1$.   It is also common practice to abbreviate the list of relators by writing
$r_1,  r_2,  \dots$ instead of $r_1=1,  r_2=1,  \dots$.   

When we give a presentation,   we are asserting  three things.   First,   
 there is a choice of elements $a_1,  a_2\dots\in G$
(which need not be distinct)
such that  every element of $G$ can be obtained by repeatedly performing the
operations  $a_i$ and their inverses.     Two assertions are made concerning the
 relators: first,    each  of the equalities $r_i=1$ is true in the group $G$ (where $1$ represents the identity element); 
 second,   {\em{all}} relations among  the $a_i^{\pm 1}$ in $G$ are  consequences of the rules $r_j=1$,   where a
 ``consequence" is a deduction made by repeatedly appealing to the following obvious facts and nothing else
$$  [ r= s\ \ \ {\text{and}}\ \ \ t=u  \ \ \ \implies rt=su],  
 \  \ \ \ [ r= 1\ \ \ \ \implies a^{-1}r a = 1].$$ 
These conditions can be formalised by saying that $G\cong F/R$ where $F$ is the free group
with basis $\{a_1,  a_2,  \dots\}$ and $R\norm F$ is the smallest normal subgroup containing $\{r_1,  r_2,  \dots\}$.  

\smallskip

With these formalities in hand,   we return to the caption in figure \ref{fig:334}.   
The reader can verify 
(with effort) that if one takes $A,  B,  C$ to be suitable rotations about the vertices of any fixed triangle in the tessellation,
then the caption satisfies the three requirements of a presentation.

\subsection{Groups that do not have finite presentations}

A group $G$ is termed {\em finitely generated} if there is a finite collection of elements $a_1,  \dots,  a_n\in G$
such that every element of $G$ is a product of the $a_i$ and their inverses.  
If a group is finitely generated then it is countable,   but not all countable groups are finitely generated.   For example,   it is easy to verify that the rational numbers under addition 
$(\Q,   +)$ cannot be finitely generated.  

It is less obvious how to construct finitely generated groups that cannot be finitely presented,   but 
I want to emphasize that one does not need obscure,   monstrous constructions to find such groups: the absence of
a finite presentation emerges in small,   concrete examples.   To illustrate this,   let
$$ 
{\rm{\bf{X}}}=\begin{pmatrix} 1 & 2\cr 0& 1\cr\end{pmatrix},  \ \ \ \ \ {\rm{\bf{Y}}}=\begin{pmatrix} 1& 0\cr 2& 1\cr\end{pmatrix}
$$
and consider the $4$-by-$4$ integer matrices 
$$
A=\begin{pmatrix} {\rm{\bf{X}}} & 0\cr 0& {\bf{I}}\cr\end{pmatrix},  \ \ \  B=\begin{pmatrix} {\bf{I}}& 0\cr 0& {\rm{\bf{X}}}\cr\end{pmatrix},  \ \ \ 
C =\begin{pmatrix} {\rm{\bf{Y}}}& 0\cr 0&{\rm{\bf{Y}}}\cr\end{pmatrix}\  \ .  
$$
\begin{example}\label{ex:4}
The subgroup of $ {\rm{SL}}(4,  \Z)$ generated by $\{A,  B,  C\}$ has {\em{no finite presentation}}.  
To prove this,   one first uses Klein's ping-pong argument
to prove that the subgroup of ${\rm{SL}}(2,  \Z)$ generated by $X$ and $Y$ is a free group $F$ of rank $2$.   
Next,   one observes that $A,   B,   C$ lie in an obvious copy of $F\times F$.   In fact,   $\<A,  B,  C\>$ is normal in
$F\times F$ with quotient $\Z$,   and such a subgroup can never be finitely presented \cite{BaRo,   grun}.  
This example,   which was studied by Stallings \cite{stall},   is the seed for a rich vein of ideas in geometric
group theory involving higher-dimensional (homological) finiteness properties \cite{BestBrady},   
algebraic fibring \cite{BNS,   kielak},   and residually-free groups \cite{BHMS}.  

\end{example} 

The indirect nature of the proof sketched in the preceding example points to the fact that it
can be {{hard}} to decide if  an explicitly-described group has a  finite  presentation.   The fact that the following question  has defied our understanding for half a century underscores this point.
\begin{question}[Serre 1973 \cite{serre1974}]
Can every finitely generated subgroup of ${\rm{SL}}(3,  \Z)$ be finitely presented?
\end{question}  

The following theorem points to deeper problems and introduces a theme that we will pursue in the next section.  
This result relies on the existence of groups with relatively small presentations where the
word problem is unsolvable,   e.g.   \cite{boris}.   It is contained in the work of Mihailova \cite{mihailova} and Miller \cite{miller}; for a concise proof see \cite[Lemma 2.1]{mb:raag}.   

\begin{theorem} 
Let $F_2$ be a free group of rank $2$.  
There is no algorithm that,   given a set of $20$ elements  $S\subset F_2\times F_2$ can
decide if the subgroup $\<S\>$ has a finite presentation or not.  
\end{theorem}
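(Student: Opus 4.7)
The plan is to reduce the word problem of a finitely presented group to the finite-presentability problem for $20$-generator subgroups of $F_2\times F_2$. Since the word problem is known to be unsolvable in some finitely presented groups (Novikov--Boone), this will yield the theorem. The reduction is built around Mihailova's fiber-product construction.

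First I would fix a finitely presented group $G=\<x_1,\dots,x_n\mid r_1,\dots,r_m\>$ with unsolvable word problem, chosen so that the presentation is as small as possible; this is where small-presentation results such as Borisov's ultimately constrain the constant $20$. Then I would form the Mihailova subgroup
\[
M_G\ =\ \{(u,v)\in F_n\times F_n : \overline{u}=\overline{v}\text{ in }G\},
\]
i.e.\ the preimage of the diagonal under $F_n\times F_n\onto G\times G$. A routine verification shows that $M_G$ is generated by the $n+m$ elements $(x_i,x_i)$ and $(r_j,1)$, and that membership $(w,1)\in M_G$ is equivalent to $w=_G 1$; hence the membership problem for $M_G$ in $F_n\times F_n$ is unsolvable. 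This is the standard Mihailova/Miller input.

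The crucial step is to promote this undecidable membership problem to an undecidable finite-presentability problem. For each word $w$ in the free generators I would construct a finitely generated subgroup $P_w\le F_n\times F_n$, obtained from $M_G$ by adjoining extra generators that depend on $w$, so that $P_w$ is finitely presented if and only if $w=_G 1$ (or its negation). The essential ingredient is the Baumslag--Roseblade theorem that every finitely presented subgroup of a direct product of two free groups is virtually a direct product of free groups. This strong structural rigidity forces any finitely presented $P_w$ to look like a product, and can be leveraged to translate the dichotomy ``$w=_G 1$ or not'' into a dichotomy on whether $P_w$ admits a finite presentation. This step is the main obstacle: calibrating the extra generators so that Baumslag--Roseblade rigidity yields a genuine equivalence with the word-problem answer, without inflating the generator count.

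Finally, I would use an embedding $F_n\hookrightarrow F_2$ (any $F_n$ embeds in $F_2$ as a free factor of a finite-index subgroup) to transport $P_w$ to a finitely generated subgroup of $F_2\times F_2$. Counting carefully -- small enough $G$, Mihailova's generators $(x_i,x_i)$ and $(r_j,1)$, the additional pair(s) encoding $w$, and the overhead of expressing the images of the $x_i$ as words in a $2$-letter basis -- one bounds the total number of generators by $20$. Combined with the fact that the word problem for $G$ is undecidable, this produces an algorithm-free reduction showing that no algorithm can decide finite presentability for the subgroup $\<S\>$ from the $20$-element input $S\subset F_2\times F_2$.
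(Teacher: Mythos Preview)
Your overall architecture matches the paper's: reduce from an undecidable problem via Mihailova's fibre-product construction, invoke the Baumslag--Roseblade/Grunewald theorem to tie finite presentability of the fibre product to finiteness of the quotient, and push everything into $F_2\times F_2$ via an embedding $F_n\hookrightarrow F_2$. The ingredients you cite (Borisov, Mihailova, Miller, Baumslag--Roseblade) are exactly those the paper points to.

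But the gap is precisely where you flag ``the main obstacle'', and the construction you sketch cannot close it. Any subgroup of $F_n\times F_n$ obtained from $M_G$ by adjoining further elements is again a Mihailova subgroup: since $(w,v)=(wv^{-1},1)\cdot(v,v)$ and $(v,v)\in M_G$, every adjoined generator may be taken of the form $(u,1)$, and $\langle M_G,(u_1,1),\dots,(u_k,1)\rangle$ is the Mihailova subgroup of $G/\langle\!\langle u_1,\dots,u_k\rangle\!\rangle$. By Grunewald/Baumslag--Roseblade this is finitely presented if and only if that quotient is \emph{finite}, which bears no direct relation to whether $w=_G1$. For your proposed equivalence (in either direction) to hold, $G$ would essentially have to be just-infinite, and no finitely presented just-infinite group with unsolvable word problem is on offer.

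The missing step, implicit in the paper's reference to Miller, is an Adian--Rabin type construction applied \emph{before} forming the fibre product: from $(G,w)$ one builds a new group $G_w$, with a uniformly bounded number of generators and relators independent of $w$, such that $G_w=\{1\}$ if $w=_G1$ and $G\hookrightarrow G_w$ (so $G_w$ is infinite) otherwise. The Mihailova subgroup of this \emph{varying} $G_w$, not of the fixed $G$, is then finitely presented iff $G_w$ is finite iff $w=_G1$. The constant $20$ comes from the generator-plus-relator count of Borisov's small group together with the fixed overhead of the $G\mapsto G_w$ construction; embedding $F_n$ in $F_2$ does not change the number of subgroup generators.
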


Even in situations
where one knows that finite presentations of groups exist,   it can be extremely hard to construct them -- for example,   it is  far from clear how to construct an explicit
 finite presentation for any uniform lattice\footnote{A {\em lattice}  in a Lie group $G$ is a discrete
 subgroup $\G<G$ whose co-volume ${\rm{vol}}(G/\G)$, with respect to Haar measure,  is finite.  The lattice is
 {\em{uniform}} if $G/\G$ is compact and otherwise it is non-uniform. } $\G < {\rm{SL}}(3,  \R)$.  
Moving beyond lattices,   if one works only 
with groups of integer matrices given by finite subsets $S<{\rm{SL}}(d,  \Z)$,    
then one can prove that there is no algorithm that allows one to construct finite presentations,   even in 
cases where one knows that such presentations exist \cite{BW:ggd}.

\subsection{Problem: hard to extract information from a presentation}

Notwithstanding the preceding comments on the difficulty of finding presentations,   
when groups arise in nature (e.g.   as discrete groups of automorphisms of a nice space),   there are some general
procedures that one can try to use to construct presentations,   with versions of the Seifert/van Kampen theorem
foremost among them.   These are particularly effective in low dimensions.    There are also various algebraic
settings in which groups come naturally equipped with a finite presentation.   
But it can be ferociously hard to extract information about a group from a presentation.   
Rather than engaging in an abstract discussion of this difficulty,   let's examine three similar-looking small presentations:
\def\a{\alpha}
\def\b{\beta}
\def\c{\gamma}
\def\d{\delta} 

\begin{equation} \label{e:G_i}
\begin{gathered}
{G_2= \langle A,   B \mid BA = A^2B\rangle}
 \\
{G_3 = \langle a,  b,  c \mid ba=a^2b,  \,   cb=b^2c,  \,   ac=c^2a\rangle}\\
{G_4 = \langle \a,  \b,  \c,  \d \mid \b\a=\a^2\b,  \,   \c\b=\b^2\c,  \,   \d\c=\c^2\d,  
\,   \a\d=\d^2\a\rangle}.
\end{gathered}
\end{equation}
The easiest of these groups to understand is $G_2$: for example,   by adding the relation $A=1$,   we see that $G_2$ maps onto
$\Z$,   so in particular it is infinite.   As for $G_3$ and $G_4$,   one of these groups is infinite
while the other is the trivial group -- {how are we to tell which is which?} 
 In my experience,   asking an audience to vote
on which of $G_3$ and $G_4$ is the trivial group will provide an even split of opinion,   and I see no obvious reason why it should be otherwise.   We'll return to these groups shortly.  

\subsection{We need action!} One can quickly get frustrated by the task of trying to understand 
groups such as  $G_3$ and $G_4$ simply by manipulating
 their presentations algebraically.   Instead,   one seeks to understand a group 
  $G=  \langle {a_{1},  \ldots,   a_{n}}\mid
{r_1,  \dots,  r_m}\rangle$ by examining the ways in which it can be realised as a group of symmetries
(automorphisms) of something.   For example,   one might pursue the path of classical representation theory by trying to
understand linear actions of $G$: in finite dimensions,   this amounts to searching for invertible matrices $A_1,  \dots,  A_n$ 
so that substituting each $A_i$ for $a_i$ in the defining relations $r_j$ yields a product that equals the identity matrix.  
Most primitively of all,   we can examine the ways in which $G$ can act as a group of permutations of a finite set.  
Beyond this,   we might look for actions  on spaces that carry  interesting or illuminating geometric,   topological or
analytic features; this is where geometric group theory really begins.   
 
\subsection{A group springing into action: topological models}

Instead of viewing $G=\<a_1,  \dots,  a_n\mid r_1,  \dots,  r_m\>$ as a recipe for expressing $G$ as a quotient
of a free group,   one can regard it as a recipe for building a 2-dimensional 
complex $K$ with fundamental group $G$: the complex has one vertex (0-cell),   has an edge (1-cell) for
each $a_i$,   orientated and labelled $a_i$,   and for each relator $r_j$ there is  2-cell  obtained by attaching
a disc to the loop in the 1-skeleton that spells out the  word $r_j$.    If one does this with 
the standard presentation $\Z^2=\langle a,  b \mid aba^{-1}b^{-1} \rangle$,   the complex $K$ is a torus.  
One can then unwrap $K$ to its universal cover to get the group $\Z^2=\pi_1K$ acting freely and 
cocompactly\footnote{Thanks to Tim Riley for drawing this picture} (as the group of deck transformations).  
\begin{figure}[ht]
        \begin{center}
\psfrag{a}{{$a$}}
\psfrag{b}{{$b$}}
\includegraphics[width=3.7in]{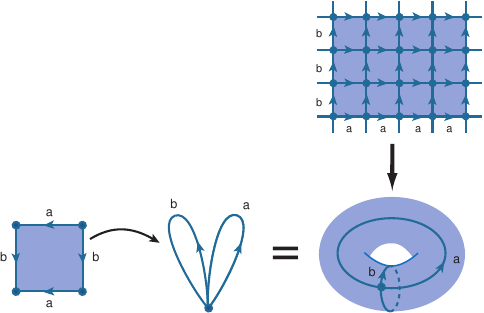}
        \end{center}
\caption{The 2-complex creating an action of $\Z^2=\langle a,  b \mid ab=ba \rangle$} 
\end{figure}  
If we start with an  arbitrary finite presentation,   the 2-complex $K$ will not be a manifold in general,   
but one can remedy this by embedding  it in  $\R^5$ then taking the boundary of a regular neighbourhood.   This boundary can   be
smoothed and with a little effort one can argue that it has the same fundamental group as the 2-complex
we started with,   thus proving:

\begin{proposition} Every finitely presented group is the fundamental group of a smooth,   compact
4-dimensional manifold.    
\end{proposition}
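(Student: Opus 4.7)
The plan is to realise $G$ as $\pi_1(M)$ in two moves: first build a $2$-dimensional CW complex $K$ with $\pi_1(K)\cong G$ directly from a finite presentation, then thicken $K$ to a compact $5$-manifold whose boundary $M$ is the desired closed, smooth $4$-manifold.

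Starting from a finite presentation $G=\<a_1,\dots,a_n\mid r_1,\dots,r_m\>$, form $K$ by taking a single $0$-cell, attaching an oriented $1$-cell labelled $a_i$ for each generator (giving a wedge of $n$ circles), and then, for each relator $r_j$, attaching a $2$-cell along the edge-loop in the $1$-skeleton that spells $r_j$. A direct application of the Seifert--van Kampen theorem to this construction gives $\pi_1(K)\cong G$.

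Choose a PL (or simplicial) structure on $K$ and embed it in $\R^5$; since $\dim(K)+\dim(K)=4<5$, a routine general-position argument guarantees such an embedding. Let $N\subset\R^5$ be a closed regular neighbourhood of $K$. Then $N$ is a compact, smooth $5$-manifold with boundary that deformation retracts onto $K$, so $\pi_1(N)\cong\pi_1(K)\cong G$. Set $M:=\partial N$; this is a smooth, closed $4$-manifold. It remains to show $\pi_1(M)\cong\pi_1(N)$.

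This last step is the only point requiring care, and it is handled by two general-position moves against the codimension-$3$ subcomplex $K\subset N$. For surjectivity of $\pi_1(M)\to\pi_1(N)$: any based loop in $N$ may be perturbed to miss $K$ (a $1$-dimensional object generically avoids a $2$-dimensional subset of a $5$-manifold), and then radially pushed along the normal-disc-bundle structure of the regular neighbourhood onto $\partial N=M$. For injectivity: any null-homotopy in $N$ of a loop in $M$ is a map of a $2$-disc into $N$, and since $\dim K+\dim D^2=2+2<5$, such a disc can be perturbed to miss $K$ and then pushed into $M$ by the same radial deformation. I expect this boundary-$\pi_1$ computation to be the only nontrivial obstacle; the construction of $K$, the embedding into $\R^5$, and the smoothing are all standard and essentially forced by dimension counts.
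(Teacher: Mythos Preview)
Your proposal is correct and follows essentially the same approach as the paper: build the presentation $2$-complex $K$, embed it in $\R^5$, take the boundary of a regular neighbourhood, and argue that this boundary has the same fundamental group as $K$. The paper merely says that ``with a little effort one can argue'' the last step, whereas you have supplied the standard general-position argument (codimension $\ge 3$) that constitutes that effort.
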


Classes of groups that are of special interest emerge when one asks how one might improve
on this construction.   
The most obvious question,   perhaps,   is whether one can reduce to three dimensions;  the
answer is no,   which provides us with the first hint that fundamental groups of compact 3-dimensional manifolds
(``{\em{3-manifold groups}}" for short) have special features,   a theme to which we will return.

But first let me remark that while these general
 constructions projecting the study of group presentations into topology are both useful and appealing,    
no hard work has been done,   so one should expect them to 
preserve the hardness of most problems.   
Thus,   for example,   the task we set ourselves of deciding which of $G_3$ and
$G_4$ was the trivial group is faithfully translated into the equally hard problem of deciding which 
of the 2-complexes built from these presentations is contractible,   i.e.   can be continuously deformed
within itself to a point.   (It requires some theory to see that these problems are equivalent.  )  
   
\subsection{Decision Problems (Max Dehn 1911)}

Combinatorial group theory is the study of groups given by generators and relators.   Its origins are
intertwined with the development of low-dimensional topology at the beginning of the twentieth century,   particularly in the work of Max Dehn who,  in his famous paper \cite{dehn},  was the first to articulate clearly the algorithmic problems involved
in extracting information from finite presentations $G = \langle   {a_{1},  \ldots,   a_{n}}\mid
 {r_1,  \dots,  r_m}\rangle $,  singling out three problems as the most fundamental:  the {\em{Word Problem}} (deciding which words in the generators equal $1\in G$),  the {\em{Conjugacy Problem}} (deciding which words represent conjugate elements in $G$),  and the {\em{Isomorphism Problem}} (which includes the Triviality Problem -- deciding if the
given group is trivial or not).
One senses a  certain level of frustration when he writes:
{\em{``One is led to such problems by necessity when working in geometry and topology."}}
His main interest at the time was the classification of knots and related questions in 3-manifold topology.
  
\subsection{Unsolvable Problems}\label{ss:parlay}
 In the 1950s,   
P.S. ~Novikov \cite{novikov} and W.W. ~Boone \cite{boone} (followed by Adian \cite{adian},  Rabin  \cite{rabin} and others)
proved that the sought-after algorithms do not exist in general.   
  
  \begin{theorem}[Noviko-Boone] There exist finitely presented groups $G=\<a_1,  \dots,  a_n\mid r_1,  \dots,   r_m\>$
  for which there is no algorithm that,   given an arbitrary word $w$ in the generators,   can
  decide whether or not $w=1$ in $G$.   
  \end{theorem}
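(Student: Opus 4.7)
The plan is to parlay the existence of a computational problem known to be algorithmically unsolvable (by the work of Turing, Post, Markov) into a group-theoretic unsolvability result, by engineering a finite presentation whose word problem simulates an arbitrary computation.

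First I would establish the semigroup version as a stepping stone. Given a Turing machine $\mathcal{M}$ whose halting problem is unsolvable, one encodes configurations of $\mathcal{M}$ (state plus tape contents plus head position) as words over a finite alphabet $\Sigma$ consisting of one letter for each tape symbol and one letter for each internal state. Each transition rule of $\mathcal{M}$ translates into a rewriting rule $u \to v$ with $|u|,|v|$ bounded by a small constant, and a special halting symbol $h$ allows us to collapse any halted configuration to a fixed word $w_\ast$. The resulting finitely presented semigroup $S_{\mathcal{M}} = \langle \Sigma \mid u_i = v_i\rangle$ has the property that a configuration word $w$ equals $w_\ast$ in $S_{\mathcal{M}}$ if and only if $\mathcal{M}$ halts on the corresponding input, so solving the word problem in $S_{\mathcal{M}}$ would solve the halting problem for $\mathcal{M}$. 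This is the Post--Markov theorem, and the main content here is essentially bookkeeping: verifying that semigroup equivalence captures exactly the reachability relation of $\mathcal{M}$, with no unwanted collapses.

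The main obstacle, and the deep step, is passing from semigroups to groups. One cannot just formally invert the generators, because the added inverses could force unintended collapses. The Boone--Britton route is to build a carefully layered tower of HNN extensions over $S_{\mathcal{M}}$ (viewed inside its group completion via auxiliary letters). At each level one adjoins stable letters $t_j$ with associated subgroups chosen so that the defining conjugation relations $t_j^{-1} x t_j = \phi_j(x)$ encode the allowable moves of $\mathcal{M}$, while Britton's Lemma on normal forms in HNN extensions guarantees that the only way a certain distinguished word $W(w)$ can be trivial in the final group is if $w$ already rewrote to $w_\ast$ in $S_{\mathcal{M}}$. The technical heart is verifying the small-cancellation-style interplay between the stable letters so that Britton's lemma cleanly translates group-theoretic triviality into semigroup-theoretic equivalence, with no new identifications introduced. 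This is the step that takes real work.

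As an alternative route, I would note that one can instead cite Higman's embedding theorem: every recursively presented group embeds in a finitely presented group. It is entirely straightforward to write down a recursively presented group with unsolvable word problem (use any recursively enumerable non-recursive set $S \subseteq \mathbb{N}$ as an index set for relators $a^{-n}ba^n = 1$ with $n \in S$), and the embedding preserves unsolvability because a solution to the word problem in the ambient finitely presented group would restrict to one in the subgroup. This packages all the hard HNN-extension machinery into Higman's theorem, which is itself the substantive result. Either way, the finitely presented group produced satisfies the conclusion of the theorem.
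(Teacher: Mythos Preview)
Your outline is a correct sketch of the classical approaches to the Novikov--Boone theorem, but the paper does not actually prove this statement: it simply records it as a known result, citing the original papers \cite{novikov} and \cite{boone}, and then uses it as a black box to discuss further undecidability phenomena (unsolvable conjugacy and triviality problems). So there is no ``paper's own proof'' to compare against.

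That said, your plan is an accurate account of how the theorem is established. The Post--Markov reduction from the halting problem to the semigroup word problem is exactly as you describe, and the passage to groups via HNN towers with Britton's Lemma controlling collapses is indeed the heart of Boone's argument (and of the later streamlined proofs). Your alternative route through Higman's embedding theorem is also valid and is the approach many modern expositions prefer, though as you note it repackages rather than avoids the HNN machinery. One small caveat on the Higman route: your example $\langle a,b \mid a^{-n}ba^n = 1,\ n\in S\rangle$ needs a little care, since those relators force $b=1$ regardless of $S$ (conjugates of $b$ being trivial makes $b$ itself trivial); the standard fix is to impose relations like $[a^{-n}ba^n, c]=1$ for $n\in S$ in a larger free group, or to work with commutator-type relators indexed by $S$.
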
 

Starting with a finitely presented group $G$ that has an unsolvable word problem,   it is not difficult to construct
groups that have solvable word  problem but unsolvable conjugacy problem,   nor is it too difficult to construct
recursive sequences of finitely presented groups so that one cannot tell which are trivial.  
For the triviality problem, one uses  the theory of HNN extensions and amalgamated free products to make a 
sequence of groups, indexed by words in the generators of $G$, so that $w=1$ in $G \iff G_w=\{1\}$.

\subsection{3-manifold groups are well behaved}

\medskip
We have already seen that 3-manifold groups are special,   and this is particularly borne out in the context
of decision problems,   although it took the whole of the twentieth century to prove that the algorithms
that Dehn sought in this context at the beginning of the century do exist.    

\begin{theorem} If $\G$ is the fundamental group of
 a compact 3-manifold,   then there are algorithms to solve the word and conjugacy problems in $\G$.   
 Moreover,   there is an algorithm that,   given two finite presentations of 3-manifold groups will
 determine whether or not the groups presented are isomorphic.  
\end{theorem}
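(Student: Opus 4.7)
The plan is to reduce each algorithmic problem to structural features of compact 3-manifolds that were painstakingly established through the twentieth century and finally completed by the Geometrization Theorem of Thurston and Perelman. Throughout, the central tool is the decomposition of a compact 3-manifold $M$ first along spheres (Kneser--Milnor prime decomposition) and then along tori (the JSJ decomposition of Jaco--Shalen and Johannson), so that each resulting piece either is Seifert fibered or admits a complete hyperbolic metric of finite volume. The fundamental group of $M$ is then the fundamental group of a finite graph of groups whose vertex groups are of these two very well-understood types.

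For the word problem I would proceed via residual finiteness. Hempel proved, using Geometrization, that the fundamental group of every compact 3-manifold is residually finite. Since $\Gamma$ is finitely presented, one can then apply McKinsey's classical trick: run in parallel (i) an enumeration of all consequences of the relators, which will eventually produce any word equal to $1$, and (ii) an enumeration of all finite quotients of $\Gamma$, checking in each whether the given word has nontrivial image. Residual finiteness guarantees that exactly one of the two searches halts, solving the word problem. For the conjugacy problem I would use the same parallel-search idea, but now one needs the far stronger property of \emph{conjugacy separability} of compact 3-manifold groups, established by Hamilton--Wilton--Zalesskii (after many partial results for Haken manifolds going back to Waldhausen); once that is in hand, the same template works.

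For the isomorphism problem the strategy, following Aschenbrenner--Friedl--Wilton, is to reduce it to the homeomorphism problem for compact 3-manifolds, which is known to be solvable (Rubinstein--Thompson for $S^3$-recognition, Matveev more generally, and Kuperberg in full generality using geometrization). Given two finite presentations $\mathcal{P}_1,\mathcal{P}_2$ of 3-manifold groups, I would enumerate all triangulated compact 3-manifolds $M$, compute a presentation of $\pi_1(M)$ from a spine, and use the already-solved word problem in 3-manifold groups to test whether this presentation is Tietze-equivalent to $\mathcal{P}_i$; this eventually produces compact 3-manifolds $M_1,M_2$ with $\pi_1(M_i)=\Gamma_i$. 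One then decides whether $M_1\cong M_2$ algorithmically, and finally invokes the algebraic rigidity theorem that, modulo a short explicit list of exceptions (lens spaces, $S^2\times S^1$, some Seifert pieces with non-unique fibration), two compact 3-manifolds are homeomorphic if and only if their fundamental groups are isomorphic. The exceptional cases are finite in number and can be checked directly.

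The main obstacle is this last step. It requires one to know that isomorphism of $\pi_1$ determines the manifold, which ultimately rests on Mostow--Prasad rigidity for the hyperbolic pieces of the JSJ decomposition, on the classification of Seifert fibered spaces and their automorphisms, and on the fact that the JSJ decomposition is essentially canonical at the level of the group (for which one needs Sela--Rips-type JSJ theory for the vertex groups and an analysis of how two different graph-of-groups presentations of the same $\Gamma$ can be compared). Marshalling these ingredients into a single algorithm is the real heart of the argument; by contrast, once one accepts Geometrization together with residual finiteness and conjugacy separability, the word and conjugacy problems are essentially formal.
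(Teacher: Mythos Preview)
The paper does not actually prove this theorem: it simply records that the result is a consequence of Perelman's resolution of Thurston's Geometrisation Conjecture together with the work of many authors, and refers the reader to Aschenbrenner--Friedl--Wilton for a thorough discussion. Your outline is essentially the argument one finds in that reference, so in spirit you and the paper agree.

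A small quibble on your isomorphism-problem sketch: having solved the word problem does not let you ``test whether this presentation is Tietze-equivalent to $\mathcal{P}_i$'' in the way you suggest. What saves you is the \emph{promise} that $\mathcal{P}_i$ presents a 3-manifold group: you can blindly enumerate pairs (triangulated compact 3-manifold $M$, chain of Tietze moves from a spine-presentation of $\pi_1M$ to $\mathcal{P}_i$) and this search is guaranteed to halt. Also, once you have $M_1$ and $M_2$, deciding $M_1\cong M_2$ is not quite the same as deciding $\pi_1M_1\cong\pi_1M_2$; rather, one algorithmically produces the \emph{finite} list of compact 3-manifolds with fundamental group $\Gamma_i$ (this is where the rigidity results you mention are used) and then checks whether the two lists intersect via the homeomorphism algorithm. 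With that adjustment your sketch is correct.
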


This theorem is a consequence of 
Perelman's resolution of Thurston's Geometrisation Conjecture,   but to see this one requires
the work of many authors; we refer the reader to
\cite{AFW} for a thorough discussion.

\subsection{Returning to the groups $G_2,  \ G_3,  \ G_4$}
 
We commented earlier that the general construction illustrated in figure 3 does not make 
the task of deciding which of $G_3$ and $G_4$ is $\{1\}$ any easier.    Since topology has not helped
us,   we might fall back on representation theory,   or the search for finite images.   But these too fail us:
we can gain further insight into  
${G_2= \langle A,   B \mid BA = A^2B\rangle}$ by noting that the defining relation is satisfied by the
matrices  
$${ {
A=\begin{pmatrix} 1& 1\cr 0& 1\cr\end{pmatrix}\ \ \ \hbox{ and } \ \ \ \ B=\begin{pmatrix} 2& 0\cr 0& 1\cr\end{pmatrix}}},  
$$
yielding a representation that one can prove is faithful.   One can also construct infinitely many finite
quotients,   for example the presentation that we gave of the symmetries of the ethane molecule (figure 2)
provides an action on a finite object.  
But a search for matrices of any size over any
field satisfying the defining relations of $G_3$ and $G_4$ will prove fruitless -- neither group admits 
any non-trivial,   finite dimensional representation over any field.   Likewise,   a search for non-trivial finite
images of these groups will prove fruitless.  

In fact,    $G_3$ is the trivial group (this is a challenging exercise) while $G_4$ is
an infinite group that has no non-trivial finite quotients.   Since $G_4$ has no non-trivial
matrix representations and no non-trivial actions on finite sets,   where are we to look
for an action that will unravel the structure of the group? 
The answer is that we get the
group to act on a {\em{tree}}:
the group decomposes as a non-trivial amalgamated free product and hence  (according to Bass-Serre
theory \cite{serre-trees})
acts
with compact quotient on an infinite tree.   This is an important insight of
 Higman \cite{higman},   reinterpreted by Serre \cite{serre-trees}.   
 
 \subsection{Really useful actions: Trees,  hyperbolicity,  and cube complexes} \label{s:cubes}
 We have just had our first encounter with actions on trees, which is the base example for
 two classes of actions that play a central role in what follows; both originate in the seminal work of Gromov \cite{gromov}.
 First,  there is Gromov's theory of  hyperbolic groups and spaces.
Whereas in a tree all triangles degenerate to tripods, hyperbolicity for geodesic metric spaces is characterised by the
property that all triangles are uniformly thin: there is a constant $\delta>0$ such that each side of each geodesic triangle is
contained in the union of the other two sides. A finitely generated group is {\em{(Gromov)  hyperbolic}} if it
acts properly and cocompactly by isometries on such a  space. It will be helpful for the reader to have a nodding acquaintance with this
theory (which can be gleaned from \cite{gromov} or \cite{BH}),  but no details will be needed to understand  these notes.  

From a different perspective,  one can regard trees as 1-dimensional {\rm{CAT}}$(0)$ cube complexes.  
The rich and extensive theory of groups acting on {\rm{CAT}}$(0)$ cube complexes
has played a central role in geometric group theory and low-dimensional topology
over the last three decades.   The reader unfamiliar with the subject can consult \cite{BH, davis, schwer} for the basic theory
and \cite{sageev} for a concise introduction.   Briefly,   a {\em{non-positively curved cube complex}}
is a space,  built by assembling Euclidean cubes,   glued together by local-isometries between faces,   
so that the resulting path metric on the space is non-positively curved in the sense of A.D. ~Alexandrov,   i.e.  
in the neighbourhood of each point,   geodesic triangles are no fatter than triangles  in the Euclidean plane with the same edge lengths -- see \cite{BH}.   This is equivalent to a purely combinatorial condition on links: there are no ``empty simplicies" in the 
link of any vertex of the complex; in dimension 2, this means that there are no circuits of length less than 4 in the link graphs. 
If the complex is locally finite-dimensional,   all triangles in the universal cover
(not just small ones) will satisfy Alexandrov's comparison condition -- in other words this universal cover will be a {\rm{CAT}}$(0)$ space  \cite{BH}.

There is a rich theory of groups that act by isometries on {\rm{CAT}}$(0)$ spaces and
a  much 
richer theory of groups that act properly and compactly by isometries on {\rm{CAT}}$(0)$  cube complexes, particularly when the
complex is {\em special} in the sense of Haglund and Wise \cite{HW}. This last condition (which concerns the way in which 
cube-bisecting hyperplanes intersect) defines the class of {\em special} groups (sometimes called cocompact special, for clarity). 
Although it is defined in terms of the geometry of hyperplanes, specialness has a global characterisation that is particularly important
for us: a cube complex is special if and only if there is a locally-isometric embedding of it into the Salvetti complex of some right-angled Artin group \cite{HW}.  For us,  the important consequence of this is that any (virtually) special group
can be embedded in ${\rm{SL}}(n,\Z)$ for some $n$.
\begin{figure} 
                   \includegraphics[width=6cm, angle=90]{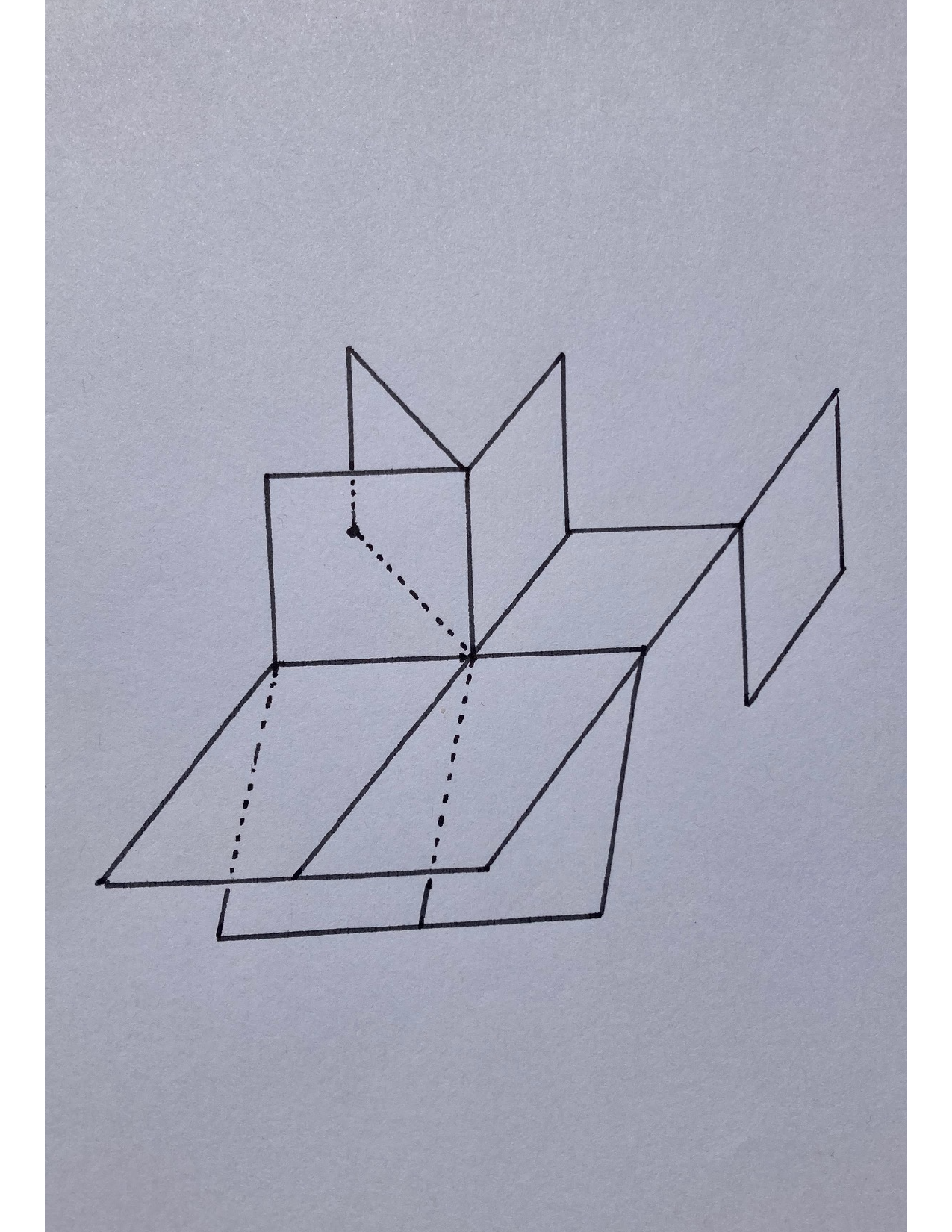}
\caption{A 2-dimensional CAT$(0)$ cube complex} 
 \end{figure}   
 
The powerful theory of (virtually) special groups developed by Wise and his collaborators has had a profound influence
on many aspects of geometric group theory, including the study of profinite rigidity. 
If  the reader explores the original papers on which these notes are based, they will quickly see how useful this theory is. 
A fundamental reason for this is the tight connection between specialness, the behaviour of subgroups in the profinite topology, and
hierarchical decompositions of groups \cite{wise:qc}.

Much of the impact of the theory of specialness in the context of profinite rigidity flows through Agol's Theorem and its consequences \cite{agol}. 
Agol proved that if a hyperbolic group acts properly and cocompactly by isometries on any {\rm{CAT}}$(0)$ cube complex, then it
has a subgroup of finite index that is special. Agol used this theorem to prove Thurston's virtual fibring conjecture for 
hyperbolic 3-manifolds. This fibring theorem, as well as various other consequences of the work of Agol and Wise, are crucial ingredients in
the main results of \cite{BMRS1} and \cite{BRS}.

\section{Hard to see if finite  images exist}

We want to know to what extent  
finitely presented groups can be understood by examining only their finite quotients.  
In this context,   we are inevitably led to ask how hard it is to discern what the finite
quotients are.   There are naive processes that one can run to list all finite quotients of a 
finitely presented group, but if such a process has not found a non-trivial finite quotient
after a certain time,    how is one to know whether it is worth continuing the search?  One cannot know.  

\begin{theorem}[Bridson, Wilton \cite{BW}]\label{t:BW}
There does not exist an algorithm that,   given an arbitrary finite presentation,   can determine whether 
or not the group presented has a non-trivial finite quotient.   
\end{theorem}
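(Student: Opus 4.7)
The plan is to reduce from an undecidable problem. The parlay of Section~\ref{ss:parlay}, applied to a finitely presented group $H$ with unsolvable word problem (Novikov--Boone), produces a uniformly constructible family of finite presentations $G_w$, indexed by words $w$ in the generators of $H$, such that $G_w$ is trivial iff $w=_H 1$. If one could replace $G_w$ by a group $\Gamma_w$ satisfying the clean dichotomy ``$\Gamma_w$ has a non-trivial finite quotient iff $G_w\neq 1$'' --- uniformly and recursively in $w$ --- then an algorithm for the finite-quotient problem would decide the word problem in $H$, contradicting Novikov--Boone. The obstacle, illustrated by the groups $G_3$ and $G_4$ of \eqref{e:G_i}, is that a non-trivial finitely presented group can easily have no non-trivial finite quotient, so the bare $G_w$ do not suffice: the heart of the proof is to engineer the passage from $G_w$ to $\Gamma_w$ so as to restore the dichotomy.

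My approach would situate the $G_w$ inside a residually finite ambient group via a Rips-type construction combined with a fibre product. Concretely: apply the virtually compact special version of the Rips construction (due to Wise) to build a short exact sequence $1\to N_w\to R_w\to G_w\to 1$ in which $R_w$ is virtually compact special hyperbolic --- hence residually finite and linear over $\Z$ by Agol's theorem --- and $N_w$ is finitely generated with a prescribed algebraic profile (e.g.\ perfect). Form the fibre product $P_w = R_w\times_{G_w} R_w$; after arranging at the parlay stage that $G_w$ has type $F_3$, the 1-2-3 theorem of Baumslag--Bridson--Miller--Short produces an explicit finite presentation for $P_w$, recursively in $w$. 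One then analyses $\wh{P_w}$ via a Grothendieck--Platonov--Tavgen style argument: because $R_w$ is residually finite and $N_w$ is perfect, $\wh{P_w}$ is controlled by $\wh{G_w}$, and when $G_w=1$ the fibre product collapses to $R_w\times R_w$. To convert this into a clean dichotomy of finite-quotient behaviour, pass from $P_w$ to a further group $\Gamma_w$ --- a suitable quotient or a carefully chosen subgroup --- designed to eliminate the ``trivially present'' finite quotients inherited from $R_w$ in exactly one of the two cases.

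The main obstacle is this last step of profinite bookkeeping. The fibre product $P_w$ naturally projects onto $R_w$ and thereby inherits all of its abundant finite quotients, so $P_w$ alone cannot separate the two cases. One must either sever this projection (by adjoining relations that trivialise it precisely when $G_w\neq 1$) or extract a carefully chosen subgroup, all while preserving finite presentability and uniform recursive constructibility in $w$. Achieving this in harmony with the Rips/Wise/Agol scaffolding --- in particular, ensuring that the $F_3$ hypothesis needed for the 1-2-3 theorem is met after the parlay stage --- is the technical heart of the argument.
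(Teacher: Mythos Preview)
Your reduction starts from the wrong undecidable problem, and this is a genuine gap rather than a technicality. You reduce from the ordinary word problem (Novikov--Boone), asking for groups $\Gamma_w$ whose finite-quotient behaviour detects whether $G_w$ is trivial. But the obstacle you yourself flag with $G_3$ versus $G_4$ is fatal here: there exist words $w\neq_H 1$ for which $G_w$ is non-trivial yet has $\wh{G}_w=1$, and no construction built from $G_w$ can distinguish this case from $G_w=1$ using only finite quotients. Your own machinery demonstrates this. Suppose (as can easily be arranged in the parlay) that $H_2(G_w,\Z)=0$. Then by the Platonov--Tavgen criterion (Proposition~\ref{p:PT}), whenever $\wh{G}_w=1$ --- whether or not $G_w$ itself is trivial --- the fibre product $P_w\hookrightarrow R_w\times R_w$ is a Grothendieck pair, so $\wh{P}_w\cong\wh{R_w\times R_w}$ in both cases. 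The profinite data you have access to are the same in the two situations you need to separate, so the ``further group $\Gamma_w$'' you gesture at in the last step cannot exist: any recursive construction from $P_w$ (or $R_w$, or $N_w$) will have the same finite images in the two cases.

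The paper's proof instead reduces from Slobodskoi's theorem that the universal theory of finite groups is undecidable: there is a finitely presented group $G$ in which one cannot decide whether a given word $w$ survives in \emph{some finite quotient} of $G$. This is already a profinite question, so the target dichotomy becomes $\wh{G}_w=1 \iff w=1$ in $\wh{G}$, which is the natural thing to aim for. The hard implication ($\Rightarrow$) is then established not via Rips/fibre-products/1-2-3 at all, but via Wise's omnipotence technology for controlling the orders of elements in finite quotients of virtually free groups, together with results on malnormal amalgams and separability. The Rips/Agol/1-2-3 circle of ideas that you invoke is central to the Grothendieck-pair constructions in Section~\ref{s:groth}, but it plays no role in the proof of Theorem~\ref{t:BW}.
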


A non-trivial finitely generated group of matrices (i.e.   a linear group) always has non-trivial finite quotients,  and finite
groups are linear over any field,  
so Theorem \ref{t:BW} implies that one also cannot decide whether a finitely presented
group has a non-trivial finite-dimensional,    linear representation (over any field or over a fixed field).  

\subsection{Sketch of Proof}

The proof relies on:
\begin{itemize}[leftmargin=*]
\item Logic: Slobodskoi's proof \cite{slobo} of the undecidability of the universal theory of finite groups.
\item Technology related to the geometry of 
subgroup separability in low-dimensions,  particularly  in the context of graphs of free and 
finite groups,  and their covering spaces,   as well as
 {\rm{CAT}}$(0)$ cube complexes (ideas of Stallings \cite{stall-graphs} and Wise \cite{wise:omni, wise-invent}).
\end{itemize}

The {\em{universal theory of finite groups}} is the set of all one-quantifier first-order sentences that are true in all finite
groups.   To see why it is relevant here,   consider the following sentence  $\Psi$ (remembering that
$\vee$ means ``or").   
$$   
\forall a,b,c,d: (ba^2b^{-1}\neq a^3)\, \vee\,(dc^2d^{-1}\neq c^3)\,\vee\,([a,b]\neq d)\,\vee\,([c,d]\neq b)\,\vee\,(a=b=c=d=1).
$$
The reader can convince themself that
this sentence is true in a specific group $Q$ if and only if there is no non-trivial homomorphism
$B\to Q$,   where $B$ is the group
\begin{equation}\label{e:B}
B = \langle a,b,c,d\mid ba^2b^{-1}a^{-3},\, dc^2d^{-1}c^{-3},\, [a,b]d^{-1},\, [c,d]b^{-1}\rangle.
\end{equation}
The reader will then see how to write a sentence $\Phi_{AR}$ based
on any finite group-presentation $\<A\mid R\>$ so that $\Psi_{AR}$ is true in a group $Q$ if any only
if there is no non-trivial homomorphism $G\to Q$,   where $G=\<A\mid R\>$.   

In our example,   the group $B$ (which is infinite) has no finite quotients \cite{BG},   so the sentence
$\Psi$ is true in every finite group,   i.e.   it  belongs to the   universal theory of finite groups.  In 
proving that this theory is undecidable, 
Slobodskoi \cite{slobo} constructed a finitely presented group $G$  in which there is no algorithm to determine 
which elements of the group survive in any finite quotient.   One can phrase this as the unsolvability of a profinite
version of the word problem in $G$.   
 With such a group $G$ in hand,   one might  hope
 to prove Theorem \ref{t:BW} by
using amalgamated free products and HNN extensions in the manner alluded to in Section \ref{ss:parlay},
building   a sequence of finitely presented groups $G_w$,   indexed by words in the generators of $G$,  
so that $G_w$ has a non-trivial finite quotient if and only if the image of $w$ is non-trivial in some finite
quotient of $G$: in a standard notation,  $w=1{\text{ in }}\widehat{G}\iff \widehat{G}_w=1$.  

Ultimately,   this outline of strategy does succeed (see \cite[Theorem C]{BW}),   but the
details of the implication $\implies$ are much harder than in the classical situation (\ref{ss:parlay}.)   This
is where the technology of Stallings  and Wise  is used.  
For reasons that quickly become clear when one tries to pursue the natural strategy,   a key point is that one
needs to be able to control the orders of the generators of $G$ (and related groups) in finite quotients: this
requires $G$ to be crafted carefully,   and it relies on refinements of  Wise's work on {\em omnipotence} \cite{wise:omni},  
which began with his proof that,   given elements $a,  b$ of a non-abelian free group that are independent in homology,   one 
can find a finite quotient in which the orders of $a$ and $b$ are in any given ratio.   Beyond this first stage of control,   one needs further technology concerning malnormal subgroups of virtually free groups,   
fibre products,   and theorems concerning malnormal amalgams and residual finiteness.

\subsection{Refinement: Finite Simple Images} 

It follows easily from Theorem \ref{t:BW} that there is no algorithm that determines the finite simple images of a finitely presented group.   Famously,   every non-abelian finite simple group is either an alternating group,   a group 
from one of 16 families of Lie type,   for example ${\rm{PSL}}_d(q)$,   or one of the 26 sporadic groups.   Thus the question arises: for which collections of finite simple groups is there an algorithm that determines the members of the collection that are images of a  given finitely presented group,   and for which families do such algorithms not exist?  This question
is essentially settled in \cite{BELS} where it is proved that if a collection of finite simple groups contains infinitely many alternating groups,   or contains classical groups of unbounded dimensions,   then there is no algorithm,   whereas  for families of simple groups of  Lie type with bounded rank,   the desired algorithms do exist.   The following special case of
this result provides a concrete illustration of this dichotomy.    

\begin{theorem}[Bridson,  Evans,  Liebeck,  Segal \cite{BELS}]$\ $
\begin{enumerate}[leftmargin=*]
\item There is an algorithm that,   given a finitely presented group $\G$ will
 determine whether or not $\G$ has infinitely many quotients ${\rm{PSL}}_n(q)$ with $n$ fixed and $q$ varying,   and
 will list these quotients if there are only finitely many.  
\item There does not exist an algorithm that,   given a finitely presented group $\G$,   will
 determine whether or not $\G$ has infinitely many quotients ${\rm{PSL}}_n(q)$  with $q$ fixed and $n$ varying,   nor
 does there exist an algorithm that can determine whether $\G$ has any quotient of this form.  
 \end{enumerate}
\end{theorem}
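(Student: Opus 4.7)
My plan for (1) is to study homomorphisms $\G \to \PSL_n$ for fixed $n$ via their moduli. From a finite presentation $\<A\mid R\>$ one writes down explicit polynomial equations over $\Z$ cutting out the affine scheme $\mathcal R_n(\G) := \mathrm{Hom}(\G, \mathrm{SL}_n)$, which is of finite type over $\Z$. I would use effective Gröbner basis arithmetic over $\Z$ to enumerate its finitely many irreducible components, together with their dimensions and their images in $\mathrm{Spec}(\Z)$. A surjection $\G \twoheadrightarrow \PSL_n(q)$ corresponds to an $\mathbb{F}_q$-point $\rho$ of $\mathcal R_n(\G)$ whose image is Zariski-dense (modulo centre) in $\mathrm{SL}_n$. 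Infinitely many such $q$ force one of two scenarios: either a component of positive relative dimension over $\mathrm{Spec}(\Z)$, which conversely yields infinitely many non-isomorphic $\PSL_n(q)$-quotients by a specialization/Chebotarev argument, or a zero-dimensional component lying over a single prime $p$ but with arbitrarily large residue fields, which a Weil-type rigidity argument excludes. Both alternatives are testable algorithmically from the component data, and in the finite case one enumerates the surviving $q$ directly.

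For (2), my plan is a many-one reduction from the undecidable problem of Theorem~\ref{t:BW}. Given an arbitrary finite presentation of a group $G$, I would construct a finite presentation of a group $H = H(G)$ whose $\PSL_n(q)$-quotients (with $q$ fixed and $n$ varying) mirror the existence of a non-trivial finite quotient of $G$. One direction is cheap: every non-trivial finite group $Q$ embeds into $\PSL_N(q)$ for some (indeed all sufficiently large) $N$ via a high-dimensional linear representation, so a non-trivial $G \twoheadrightarrow Q$ begets a $\PSL_N(q)$-quotient of $H$, and block-diagonal embeddings $\PSL_N(q) \hookrightarrow \PSL_{N+m}(q)$ promote one such quotient into infinitely many, settling both clauses of (2) at once. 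The converse requires $H$ to be engineered so that any surjection $H \twoheadrightarrow \PSL_n(q)$ restricts non-trivially to the $G$-factor. The natural candidate is an amalgamated free product $H = G *_C F$ over a malnormal subgroup $C$, together with ``entangling'' auxiliary generators forcing the $G$- and $F$-factors to share a non-abelian simple image in any finite simple quotient.

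The main obstacle for (1) will be ruling out arbitrarily large $\PSL_n(q)$-quotients arising from Zariski-isolated representations: one must uniformly control Galois orbits, scalar ambiguity, and the possibility that the image lies in a proper subfield or proper algebraic subgroup of $\mathrm{SL}_n$, across all residue characteristics. The main obstacle for (2) will be the entangling step, since the free factor $F$ already admits abundant $\PSL_n(q)$-quotients for $q$ fixed and $n$ varying; simple malnormality is insufficient, and one needs the finer control over orders of generators in finite quotients supplied by Wise's omnipotence and the Stallings-graph/fibre-product machinery invoked in the sketch of Theorem~\ref{t:BW}.
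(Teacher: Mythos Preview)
The paper does not prove this theorem; it cites \cite{BELS} and records only that the proof of (1) ``is rooted in representation theory and the model theory of finite fields''. Your proposal has gaps in both parts.

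For (1), the representation-scheme framework is right, but you omit the decisive ingredient the paper names: the decidability of the first-order theory of finite fields (Ax). An $\mathbb{F}_q$-point of $\mathcal{R}_n(\G)$ is only a homomorphism to $\mathrm{SL}_n(\mathbb{F}_q)$; to count surjections onto $\PSL_n(q)$ you must excise the loci where the image lands in a proper algebraic subgroup or in a subfield subgroup $\PSL_n(\mathbb{F}_{q'})$ with $q'\mid q$. You flag this as the main obstacle but give no mechanism. With $n$ fixed, ``$\G$ surjects onto $\PSL_n(\mathbb{F}_q)$'' can be expressed uniformly as a first-order sentence about the field $\mathbb{F}_q$, and Ax's theorem then decides whether it holds for infinitely many finite fields; your Gr\"obner--Chebotarev route is trying to reprove a fragment of this by hand. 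Even on its own terms your dichotomy is incomplete: a component supported over a single prime $p$ may be positive-dimensional over $\mathbb{F}_p$ and hence contribute $q=p^k$ for all large $k$ --- a third source of infinitely many quotients, not something any ``Weil-type rigidity argument'' excludes.

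For (2), reducing from Theorem~\ref{t:BW} is the correct strategy, but your ``cheap direction'' is broken as written. Composing a surjection $H\twoheadrightarrow\PSL_N(q)$ with an embedding into $\PSL_{N+m}(q)$ gives a map with image $\PSL_N(q)$, not a surjection onto $\PSL_{N+m}(q)$; block-diagonal embeddings cannot ``promote one such quotient into infinitely many'' (and in any case they embed $\mathrm{SL}_N$, not $\PSL_N$). What is actually needed is a construction producing, whenever $\widehat{G}\neq 1$, a surjection $H\twoheadrightarrow\PSL_n(q)$ for each large $n$ separately. Your identification of the converse entangling as the hard step, and of omnipotence and the fibre-product machinery from the proof of Theorem~\ref{t:BW} as the relevant tools, is correct.
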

 The proof of (1) is rooted in representation theory and the model theory of  finite fields; the algorithm is not practical.
    
\section{Capturing groups via finite actions: Residual Finiteness}

We return to our main theme:  
{\em{To what extent is a finitely generated (or finitely presented) group determined by its finite
quotients (equivalently,  its profinite completion)?}} 

The information contained in the kernel of the natural map $\G\to\wh{\G}$ is lost when we restrict our
attention to $\wh{\G}$,  so it is natural to focus on groups where this kernel is trivial,  i.e.  the   residually
finite groups.

\begin{definition} A group
$\G$ is {\em residually finite} if for every $ \gamma\in\G\smallsetminus\{1\}$
there is a homomorphism  $\pi:\G\to Q$ to a finite group with $\pi(\gamma)\neq 1$. 
\end{definition}

\subsection{Which groups are residually finite?}\label{ss:rf}
There are many results in the literature establishing that groups are residually finite; the
following selection is slanted towards the theme of these notes.  I particularly want to 
emphasize the remarkable nature of items (4) and (5): they provide combinatorial
criteria for verifying that a group is linear, and they have vastly extended the available reservoir of
residually finite groups.  The wealth of examples that these criteria provide,  made available largely through the work of
Dani Wise \cite{wise:qc},  underpins many advances in the field.
 
\begin{enumerate}[leftmargin=*]
\item
If a finitely generated group $\G$ admits a faithful linear representation over any field $\Gamma
\hookrightarrow{\rm{GL}}(n  K)$,   then $\G$ is residually finite \cite{malcev}.
\item
If $\G$ is the fundamental group of a compact 3-manifold,   then $\G$ is residually finite \cite{hempel:rf}. 
\item
Mapping class groups of compact surfaces and (outer) automorphism groups of finitely
generated free groups are residually finite \cite{gross}.
\item Hyperbolic groups that act properly and cocompactly on CAT$(0)$ complexes are residually finite.
\item $C'(1/6)$ small-cancellation groups are residually finite.
\item Random finitely presented groups,  in a suitable model,  are residually finite.
\end{enumerate}

(4) is a consequence of Agol's Theorem, as discussed in Section \ref{s:cubes}.
The  $C'(1/6)$ small-cancellation condition in (5) bounds the size of overlaps between subwords of relators in group-presentations.
Groups that have finite presentations satisfying this condition are hyperbolic, and Wise \cite{wise-sc} proved
that they act on CAT$(0)$ cube complexes,  so (4) applies. This way of phrasing things, though,
obscures the fact that Wise's earlier work
established  residual finiteness in many cases,  opening up the great reservoir of examples that (4) and (5)
provide \cite{wise-invent, wise:rf, wise:qc}.   
 A popular model for a random group  is Gromov's density model \cite{gromov2}, whereby
one fixes the number of generators for a presentation and chooses the relators uniformly at random from 
cyclically-reduced words of a given
length, with the number of words determined by a density parameter; the focus is on what happens as the length gets large. At small densities,  with overwhelming 
probability these groups satisfy the  $C'(1/6)$ small-cancellation condition \cite{Oll}.
 
The following challenge is one of the guiding problems of geometric group theory.

\begin{question} Are all Gromov hyperbolic groups  residually finite?  \end{question}

Residually finite groups harbour less pathology than arbitrary groups.  For example,
it is easy to see that a finitely presented,  residually finite group $\G$ has a solvable word problem,
although there do  exist finitely generated,   residually finite groups that have unsolvable word problem \cite{meskin}.  
Note the recurrence of an important theme here:  finitely generated groups can be much
wilder than finitely presented ones. 

We mention one other property of residually finite groups because it plays a surprisingly useful 
role in the results we shall look at later.  

\begin{lemma}\label{l:hopf}
If a finitely generated group $\G$ is residually finite,   then it has the Hopf property,   i.e.   every epimorphism
$\phi:\G\to \G$ is  an isomorphism.  
\end{lemma}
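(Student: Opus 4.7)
My plan is to argue by contradiction, using the classical fact that a finitely generated group has only finitely many subgroups of each given index $n$ (since each such subgroup is the stabilizer of a point under some homomorphism $\Gamma\to S_n$, and there are only finitely many such homomorphisms once a finite generating set is fixed). Suppose $\phi\co\Gamma\to\Gamma$ is a surjection with nontrivial kernel, and pick $g\in\ker\phi\ssm\{1\}$. By residual finiteness there is a finite-index normal subgroup $N\norm\Gamma$ with $g\notin N$; set $n=[\Gamma:N]$.

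Let $\mathcal S_n$ be the finite set of subgroups of $\Gamma$ of index $n$, and define
\[
\Phi\co \mathcal S_n \to \mathcal S_n,\qquad \Phi(H)=\phi^{-1}(H).
\]
Because $\phi$ is surjective, $[\Gamma:\phi^{-1}(H)]=[\Gamma:H]=n$, so $\Phi$ does indeed land in $\mathcal S_n$. Next I would observe that $\Phi$ is injective: if $\phi^{-1}(H_1)=\phi^{-1}(H_2)$, then applying the (surjective!) map $\phi$ to both sides yields $H_1=\phi(\phi^{-1}(H_1))=\phi(\phi^{-1}(H_2))=H_2$. An injection from a finite set to itself is a bijection, so there exists $H\in\mathcal S_n$ with $N=\phi^{-1}(H)$.

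This gives the contradiction: $g\in\ker\phi\subseteq\phi^{-1}(H)=N$, whereas $N$ was chosen precisely so that $g\notin N$. Hence $\ker\phi=\{1\}$, i.e.\ $\phi$ is an isomorphism.

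The only non-formal ingredient is the finiteness of $\mathcal S_n$, which is where finite generation of $\Gamma$ enters in an essential way; without it the map $\Phi$ could in principle be an injection of an infinite set into itself with proper image, and the pigeonhole step collapses. Everything else is bookkeeping with preimages and the surjectivity of $\phi$.
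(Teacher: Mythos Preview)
Your proof is correct and rests on the same ingredient as the paper's: a finitely generated group has only finitely many homomorphisms to a fixed finite group (equivalently, finitely many subgroups of a given index). The packaging differs slightly---the paper picks $\pi:\G\to Q$ with $\pi(\gamma)\neq 1$ and argues that the compositions $\pi\circ\phi^n$ are pairwise distinct (giving infinitely many maps $\G\to Q$), whereas you run a single-step pigeonhole on the map $H\mapsto\phi^{-1}(H)$ of the finite set $\mathcal S_n$---but these are two sides of the same coin: the kernels of the paper's maps are exactly the iterates $\Phi^n(N)$ in your notation, and your observation that $N\notin\Phi(\mathcal S_n)$ is what makes those iterates distinct.
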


\begin{proof} If there were a non-trivial
element $\gamma\in\G$  in the kernel of an
epimorphism $\phi:\G\to\G$ and  $\pi: \G\to Q$ was a homomorphism to a finite group with $\pi(\gamma)\neq 1$,
then the infinitely many maps $\pi\circ\phi^n:\G\to Q$ would all be distinct,   which is impossible
because any  homomorphism $\G\to Q$ is determined by the image of a finite generating set.  
\end{proof}

\section{Profinite properties of groups}
 
 We want to understand the extent to which properties of residually finite groups are encoded in their profinite completions.

\begin{definition}
A property $\mathfrak{P}$ of abstract groups is a {\em profinite property} (or a profinite invariant) if,
for finitely generated,  residually finite groups $\G_1$ and $\G_2$,  whenever $\G_1$ has  $\mathfrak{P}$
and $\wh{\G}_1\cong\wh{\G}_2$,  then $\G_2$ has $\mathfrak{P}$.
\end{definition}

\subsection{Some basic facts  about profinite completion}
In the introduction,  we defined the profinite completion $\wh{\G}$ of an abstract group $\G$ to be
the inverse limit of the system of all finite quotients $\G/N$ of $\G$.  Equivalently,  one can define $\wh{\G}$ 
to be the closure
of the image of $\G$ in the direct product of the finite quotients $\G/N$ (which is compact in the product topology):
$$
\wh{\G} = \{ (x_N) \mid x_NM = x_M\ \forall M>N\}  \subset \prod_N \G/N.
$$
Profinite completion
$\G\mapsto\wh{\G}$ is a functor from the
category of abstract groups to the category of profinite 
groups:
a homomorphism of abstract groups $u:\G_1\to \G_2$ extends uniquely to a continuous homomorphism of
profinite groups $\wh{u}:\wh{\G}_1\to \wh{\G}_2$.   

By construction,  $\wh{\G}$ is residually finite.  $\G$ is residually finite if and only if the natural map $\iota: \G\to\wh{\G}$ is
injective.  Henceforth,  
we shall concentrate on residually finite groups,  in which setting we identify $\G$ with its image under $\iota$.
If $\G$ is finitely generated,  there is a 1-1 correspondence between the finite-index subgroups of $\G$
and the finite-index subgroups\footnote{The
simplicity of this statement relies on the Nikolov-Segal Theorem \cite{NS}.} of $\wh{\G}$: this associates to $H<\G$ its (topological) closure $\overline{H}<\G$, 
and it associates to $\mathcal{H}<\wh{\G}$ the intersection $\mathcal{H}\cap\G$. 

\begin{remark}\label{blob}
An important point to remember is that,  for finitely generated groups $\G_1$ and $\G_2$,  the existence of
an isomorphism $\wh{\G}_1\cong\wh{\G}_2$ does not imply the existence of any non-trivial homomorphism 
from $\G_1$ to $\G_2$.  On the other hand,  the correspondence between finite-index subgroups of $\G_i$ and
$\wh{\G}_i$ does induce,  via $\wh{\G}_1\cong\wh{\G}_2$,  an isomorphism between the lattices of finite-index subgroups
in $\G_1$ and $\G_2$; this isomorphism preserves the index of subgroups,  normality,  and conjugacy.   
It follows that any property that involves counting subgroups of a given index
will be a profinite invariant,  for example subgroup growth \cite{LubSeg}.
Moreover,  if $H_1<\G_1$ corresponds to $H_2<\G_2$,  then $\wh{H}_1\cong\wh{H}_2$ (an observation that can be extremely
useful).
\end{remark}

\subsection{Profinite properties: examples and counterexamples}

The following list contains some properties that are easily seen to be profinite invariants but it also points to many
properties that are  {\em not} profinite invariants.   Rather than being disappointed by this  I hope that
the reader will reflect that the difficulty of finding non-trivial profinite invariants  
makes the existence of profinite rigidity theorems all the more remarkable. 

\begin{enumerate}[leftmargin=*]
\item Being {\bf abelian} is a profinite property. {\em Proof:}
If $\G_1$ is abelian,  then so are all of its finite quotients.  If $\G_2$ is not abelian,  then for
some $a,b\in \G_2$,  the commutator $c=[a,b]$ is non-trivial.  If $\G_2$ is residually finite,  then $p(c)=[p(a), p(b)]\neq 1$
in some finite quotient $p: \G_2\to Q$.  As this quotient is not abelian,  it is not a quotient of $\G_1$,  and therefore 
$\wh{\G}_1\not\cong\wh{\G}_2$.

\item By considering which abelian $p$-groups $\G$ maps onto (varying the prime $p$),  one can show that 
the isomorphism type of the {\bf{abelianisation}} of a finitely generated group is a profinite invariant.

\item A slight variation on the argument in (1) shows that satisfying any {\bf{group law}} is a profinite property,  for example
being nilpotent of class $c$,  or solvable of derived length $d$. It is considerably harder to prove that being
{\bf{polycyclic}} is a profinite invariant, but Gabbagh and Wilson proved that this is true \cite{polyC}. 

\item The $\ell_2$-Betti numbers of groups are defined as analytic invariants, 
 but L\"{u}ck's approximation theorem \cite{luck:gafa} 
allows one to compute the {\bf{first $\ell_2$-Betti number}} of a finitely presented,  residually finite group as a limit of 
ordinary Betti numbers:  $b_1^{(2)}(\G) := \lim b_1(H_n)/[ \G\colon H_n]$,  where $(H_n)$ is any sequence of finite-index
normal subgroups  
intersecting in the identity.   It follows from the facts about subgroup
lattices in Remark \ref{blob} that the first $\ell_2$-Betti number is a profinite invariant among finitely presented,
residually finite groups (\cite[Corollary 3.3]{BCR}).  But Kammeyer and Sauer show
that the higher-dimensional $\ell_2$-Betti numbers are
not profinite invariants  \cite{KaS}.

\item Lackenby \cite{lack} proved that having a subgroup of finite index that maps onto a non-abelian free group
({\bf{largeness}}) is a profinite invariant among finitely presented groups.

\item Aka \cite{aka1} found lattices in semisimple Lie groups that have the same profinite completion even though
the real-rank of the Lie groups is different.  He also  proved that Kazhdan's {\bf{property (T)}} is not a
profinite invariant \cite{aka2}.  Work of Kassabov and Nikolov \cite{KN} shows that property $(\tau)$ is not a profinite invariant.

\item Property (T) can be phrased as a fixed point property for isometric actions on Hilbert spaces.
{\bf{Fixed-point properties}} for actions on other CAT$(0)$ spaces have also been studied. 
Cheetham-West,  Lubotzky, Reid and Spitler \cite{tam} showed that Serre's property FA (the fixed-point property
for actions on tress) is not a profinite
invariant,  and in \cite{mrb:agt} this was extended to fixed-point properties for actions on higher-dimensional CAT$(0)$
spaces.

\item Kionke and Schesler \cite{KS} proved that {\bf{amenability}} is not a profinite invariant. 

\item Being finitely presented is not a profinite invariant of finitely generated groups (see Section \ref{s:groth}),
and likewise the higher {\bf{finiteness properties}} of groups \cite{lub-FPn}.

\item Being {\bf{hyperbolic}} (in the sense of Gromov) is not a profinite invariant among finitely generated groups, 
but it is unknown whether it is preserved under profinite equivalence of finitely presented groups.

\item It is unknown whether having a finite dimensional {\bf{linear}} representation is a profinite property.

\item The theorem stated below can be used to see that the property 
of having a {\bf{subgroup}} of a given sort is almost never a profinite invariant: for example,  having a non-trivial 
finite subgroup,  or a subgroup isomorphic to $\Z^3$,  or a 2-generator subgroup that is neither abelian nor free.
This precludes the profinite invariance of various properties that are inherited by subgroups, e.g.~being orderable,   or locally indicable,  or having cohomological dimension less than $d>2$.
\end{enumerate}

\begin{theorem}[\cite{mb:FbyF}]  \label{t:FbyF}
Given a finitely generated,  recursively presented  group $\G$
that is residually finite,  one can construct a finitely generated,  residually finite
free-by-free group $M_\Gamma = F_\infty\rtimes F_4$ and an embedding
$M_\Gamma  \hookrightarrow (F_4\ast \G)\times F_4$
that induces an isomorphism of profinite completions.
\end{theorem}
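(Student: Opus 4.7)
Write $F_4 = \langle x_1,\ldots,x_4\rangle$ and fix a finite generating set $g_1,\ldots,g_n$ of $\Gamma$. Let $\pi\colon F_4\ast\Gamma\to F_4$ be the retraction killing $\Gamma$. The proposal is to choose elements $w_1,\ldots,w_n\in F_4$ (specified below via Higman-style encoding) and define
\[
M_\Gamma := \langle (x_i,x_i),\ (g_j,w_j) : 1\le i\le 4,\ 1\le j\le n\rangle \le (F_4\ast\Gamma)\times F_4.
\]
The projection onto the second coordinate is split by the diagonal $F_4\hookrightarrow M_\Gamma$, yielding a semidirect product decomposition $M_\Gamma = K\rtimes F_4$ with $K$ the kernel.

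\textbf{Free-by-free structure.} The claim is that with a careful choice of the $w_j$ (e.g.\ so that $\{w_1,\ldots,w_n\}$ freely generates a rank-$n$ free subgroup of $F_4$ interacting suitably with the $x_i$), the kernel $K$ is a free group of infinite rank. Any element of $K$ is the first coordinate of a product of generators whose second coordinates cancel in $F_4$; placed in free-product normal form in $F_4\ast\Gamma$, such an element must properly interleave $\Gamma$- and $F_4$-letters, since the free independence of the $w_j$ ensures no product of $\Gamma$-letters alone (nor of $F_4$-letters alone) can satisfy the second-coordinate condition non-trivially. By Kurosh's subgroup theorem, $K$ therefore meets no conjugate of $F_4$ or $\Gamma$ non-trivially, hence $K$ is free. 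The conjugation action of $F_4$ on $K$ has infinite orbits (for instance, $(x_1,x_1)^k(g_jx_j^{-1},1)(x_1,x_1)^{-k}$ produces distinct elements of $K$ as $k$ varies), so $K$ has infinite rank.

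\textbf{Profinite isomorphism.} I would verify Grothendieck's criterion: for every finite group $Q$, the restriction $\mathrm{Hom}((F_4\ast\Gamma)\times F_4,Q)\to\mathrm{Hom}(M_\Gamma,Q)$ is a bijection. For surjectivity, a homomorphism $\phi\colon M_\Gamma\to Q$ determines images $\alpha(x_i):=\pi_1\phi(x_i,x_i)$, $\beta(x_i):=\pi_2\phi(x_i,x_i)$, $\alpha(g_j):=\pi_1\phi(g_j,w_j)$, $\beta(w_j):=\pi_2\phi(g_j,w_j)$, and these assemble into a well-defined map $(F_4\ast\Gamma)\times F_4\to Q$ because the relators of $\Gamma$ automatically hold in $Q$ ($\phi$ being well-defined and $Q$ finite). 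Injectivity is immediate since the generators of the ambient group occur as coordinates of generators of $M_\Gamma$. Residual finiteness of $M_\Gamma$ is inherited from $(F_4\ast\Gamma)\times F_4$, which is residually finite since $\Gamma$ and $F_4$ are, and free and direct products of residually finite groups are residually finite.

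\textbf{Handling the recursive presentation; main obstacle.} The delicate point is that $\Gamma$ is only recursively, not finitely, presented: the finite data $\{w_1,\ldots,w_n\}$ must encode the infinite set of relators of $\Gamma$ in a profinitely accurate way. I would handle this by invoking Higman's embedding theorem to realize $\Gamma$ inside a finitely presented group $H$ built as a chain of HNN extensions, whose stable letters are identified with designated elements of $F_4$ and thereby determine the $w_j$'s. The main obstacle is maintaining compatibility among three competing requirements: (i) $M_\Gamma$ remains finitely generated; (ii) $K$ is genuinely free, with no Kurosh-intersection arising unexpectedly from the $\Gamma$-relators interacting with the chosen $w_j$; and (iii) Grothendieck density survives the passage from the finitely presented Higman envelope back down to $\Gamma$ itself. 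Balancing these using the $4$ stable letters available in $F_4$ — which is why the rank is $4$ and not $2$ — is the technical heart of the argument.
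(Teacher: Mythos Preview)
Your proposal has the right overall shape---$M_\Gamma$ is indeed a fibre-product-style subgroup of $(F_4\ast\Gamma)\times F_4$, and Kurosh is the right tool for freeness of the kernel---but the central mechanism is missing, and the profinite-isomorphism step is not correct as written.

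The construction in \cite{mb:FbyF}, as the paper indicates, combines the Platonov--Tavgen criterion (Proposition~\ref{p:PT}) with the universal embedding theorem of \cite{mb:baum}. The latter produces a fixed finitely presented, four-generated group $J$ with $\widehat{J}=1$ and $H_2(J,\Z)=0$ into which \emph{every} finitely generated, recursively presented group embeds (this is where Higman and Baumslag--Dyer--Heller enter). One fixes a surjection $p\colon F_4\to J$ and the embedding $\iota\colon\Gamma\hookrightarrow J$, sets $q:=p\ast\iota\colon F_4\ast\Gamma\to J$, and takes $M_\Gamma$ to be the fibre product $\{(a,b):q(a)=p(b)\}$. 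The isomorphism of profinite completions then follows from (an asymmetric form of) Proposition~\ref{p:PT}, using precisely $\widehat{J}=1$ and $H_2(J,\Z)=0$. The kernel of the projection $M_\Gamma\to F_4$ is $\ker q$, which meets no conjugate of $\Gamma$ because $\iota$ is injective; Kurosh then gives freeness. Finite generation comes from the finite presentability of $J$: the $w_j$ are words in $F_4$ with $p(w_j)=\iota(g_j)$, and one must also include the finitely many relators of $J$ as generators $(r_k,1)$.

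Your direct verification of the profinite isomorphism does not work. The map $\phi$ lands in $Q$, not in a product, so ``$\pi_1\phi$'' is undefined; and even granting some intended meaning, there is no reason the relators of $\Gamma$ should hold for an arbitrary assignment $g_j\mapsto(\text{something in }Q)$. More fundamentally, if the $w_j$ are chosen merely to be ``free'' in $F_4$, the resulting subgroup has no reason to be profinitely dense in $(F_4\ast\Gamma)\times F_4$: density forces the quotient of $F_4\ast\Gamma$ by the normal closure of the elements $g_jw_j^{-1}$ (together with further relators) to have trivial profinite completion \emph{and} trivial $H_2$. Engineering this is exactly the role of the universal acyclic group $J$, and it is also why recursive presentability of $\Gamma$ is the right hypothesis---one needs Higman to embed $\Gamma$ into the finitely presented $J$. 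Your final paragraph gestures toward Higman, but the essential requirement that the target be profinitely trivial and acyclic is absent, and without it the Platonov--Tavgen machinery cannot be invoked.
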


This theorem is proved by combining ideas described in Section \ref{s:groth} with the universal
embedding theorem in \cite{mb:baum},  which elaborates on ideas of Higman \cite{hig} and
Baumslag,  Dyer and Heller \cite{BDH}.

\section{Profinite Rigidity}

When studying profinite rigidty, one is interested in theorems whose conclusion 
is $${\wh{\La}}\cong{\wh{\G}}\implies \La\cong\G.$$

\subsection{Three Forms of Rigidity}

The following definition of what it means for a group to be profinitely rigid has become standard, although it is
still common to add ``in the absolute sense" for clarity (as we shall do several times in this article).
 
 \begin{definition}[{\bf{Absolute}} Profinite Rigidity]\label{d:absolute}
  A finitely generated,   residually finite group $\G$ is
  {\em profinitely rigid}
 if,   whenever $\La$ is finitely generated and   residually finite,  
$
\wh{\La}\cong\wh{\G}\implies \La\cong\G. 
$
\end{definition}
In the previous section we saw that when $\G$ satisfies a group law,  one only needs to consider groups $\Lambda$
satisfying the same group law.  In such cases,  (\ref{d:absolute}) reduces to
an instance of the following problem. 

\begin{definition}[{\bf{Relative}} Profinite Rigidity in a class $\mathcal{G}$] 
If a finitely  generated,   residually finite group $\G$ belongs to a class of groups  $\mathcal{G}$, then
one says that $\G$ is {\em profinitely rigid among groups in $\mathcal{G}$}
if whenever $\La\in\mathcal{G}$  is finitely generated and   residually finite,     $
\wh{\La}\cong\wh{\G}\implies \La\cong\G. 
$ 
\end{definition} 

\begin{remark}[The absolute case is much harder]\label{lemon-squeezy}
The literature contains many more theorems about relative profinite rigidity than it does about absolute profinite
rigidity,  for the obvious reason that even if one starts with a group $\G$ that has rich structure,
when one finds a group $\Lambda$ ``lying in the gutter" and knows only that it is finitely generated, residually finite and
has the same finite quotients as $\G$, how is one to know that $\Lambda$ shares any of the fine features of $\G$? For example, 
if $\G$ is a group of matrices,  why should $\Lambda$ have any interesting linear representations? If $\G$
is a 3-manifold group,  how would one begin to argue that $\Lambda$ is a 3-manifold group?

In contrast, if one is trying to argue that a group is profinitely rigid in a class $\mathcal{G}$,
then one can study special features of the groups in $\mathcal{G}$ and ask if these are visible in $\wh{\Lambda}$
when  $\Lambda\in\mathcal{G}$.  For example, 
we shall see in the Section \ref{s:3M} that many 
features of compact 3-manifolds $M$ are invariants of $\wh{\pi_1M}$.
\end{remark} 

The third form of profinite rigidity that is widely studied is {\bf{Grothendieck}} rigidity, where  one is interested in whether
it is possible to obtain isomorphisms $\wh{u}:\wh{\Lambda}\to\wh{\G}$ induced by homomorphisms 
of discrete groups $u:\Lambda\to\Gamma$. This  is the subject of Section \ref{s:groth}.

\subsection{Early work,  sobering examples, and a guiding question}\label{ss:sobering}

\begin{enumerate}[leftmargin=*]
\item We have seen that finitely generated abelian groups are profinitely rigid.
When one moves from abelian to {\bf{nilpotent}} groups,  one finds that some are profinitely rigid (e.g. free nilpotent groups) while others
are not \cite{remes}.  But in all cases, the ambiguity is limited: only finitely many finitely generated nilpotent
groups can have the same profinite completion \cite{pickel}.
Following \cite{GZ},
an appealing way to express this finite ambiguity is to use the language of {\em profinite genus} (within a class of groups). 
Thus Pickel's Theorem  is that the profinite genus of any finitely generated nilpotent group is finite.
Grunewald, Pickel and Segal \cite{GPS} proved that  {\bf{virtually polycyclic}} groups also have {\bf{finite genus}}.

\item  In 1974 Baumslag \cite{Baum74}  discovered the following striking example which
underscores how quickly one can lose profinite rigidity when one moves away
from the abelian setting.
Let {\bf{$G_1=(\Z/{25})\rtimes_\alpha\Z$ and $G_2=(\Z/{25})\rtimes_{\beta}\Z$}}, where $\alpha$ is the automorphism
$x\mapsto x^6$ and $\beta=\alpha^2$ is the automorphism $x\mapsto x^{11}$.  The reader can verify directly that
$G_1\not\cong G_2$ but   $G_1$ and $G_2$ have the same finite quotients.

\item Baumslag drew inspiration from an important example of Serre  \cite{serre1964}. In 1964 
Serre constructed a smooth projective variety $V$ defined over a number field $K$ such that for the varieties $V_{\phi_1}$
and $V_{\phi_2}$ obtained by extension of scalars along  different embeddings $\phi_1,   \phi_2 : K\hookrightarrow\mathbb{C}$,  the  fundamental groups $\G_i = \pi_1 V_{\phi_i} \cong \Z^{p-1}\rtimes  (\Z/p)$  are not isomorphic,
but $\wh{\G}_1 \cong \wh{\G}_2$ because  each $\wh{\G}_ i$ is the algebraic (\'{e}tale) fundamental group of the underlying scheme.  
Pairs of {\bf{Galois conjugate}} varieties exhibiting this behaviour can also be found among complex surfaces \cite{BCG}. 
Galois conjugation will play an important role in Section \ref{s:Galois}.

\item  The examples of Stebe \cite{stebe} are based on a different phenomenon. 
He constructed pairs of  matrices in ${\rm{SL}}(2,\Z)$ that are not conjugate in $
{\rm{GL}}(2,\Z)$ but are conjugate when one projects them to ${\rm{GL}}(2,\Z/d)$ for every integer $d>2$.  One such pair is
$$
\phi(1)=\begin{pmatrix} 188& 275\cr 121& 177\cr\end{pmatrix}\ \ \ \hbox{ and } \ \ \ \ \phi(2)=\begin{pmatrix} 188& 11\cr 3025& 177\cr\end{pmatrix}.
$$
Writing {\bf{$\G_i = \Z^2\rtimes_{\phi(i)}\Z$}},  we have  $\G_1\not\cong\G_2$ {but} $\widehat{\G}_1 \cong\widehat{\G}_2.
$
Every element of ${\rm{SL}}(2,\Z)$ can be realised as an orientation-preserving homeomorphism of the torus, so 
$\G_i$ is the fundamental group of a torus bundle over the circle.  In fact, it is a lattice in the Lie group  {\rm{{Sol}}},
which is one of the eight geometries in dimension 3 (see \cite{scott}).  Thus we have examples of closed,  geometric
3-manifolds that are not profinitely rigid. 
These types of torus-bundle examples were explored further by Funar \cite{funar}.   The corresponding punctured-torus
manifolds are more rigid \cite{BRW}.

\item Hempel \cite{hempel} produced further pairs of geometric 3-manifolds   with $\pi_1M_1\not\cong\pi_1M_2$
but $\wh{\pi_1M}_1\cong\wh{\pi_1M}_2$. These examples, which have the local geometry of $\H^2\times\R$, are higher-genus
surface bundles over the circle, where the holonomy has finite order. They are very close in spirit to Baumslag's finite-by-cyclic
examples in (2).
             
\item Following the work of 
Platonov and Tavgen \cite{PT} discussed in Section \ref{s:groth},  one knows that the direct product of two
or more non-abelian free groups is not profinitely rigid. 
\end{enumerate}

A noticeable absentee from the list of groups considered above is the free group $F_n$.  The following
question \cite[Qu.15]{probs} is a guiding challenge in the field.

\begin{question}[Remeslennikov] \label{q:rem}
Are finitely generated free groups profinitely rigid?
\end{question}

\subsection{Relative Profinite Rigidity and Hyperbolic Geometry}
There is now a large literature on relative profinite rigidity  with many positive and negative results, some of which 
are referenced in Section \ref{s:3M}  and subsections 5.2 and 6.2.  It is a very active field.  
I highlight the following result because it marked the beginning of a flowering of interest in the profinite
rigidity of low-dimensional orbifold groups, and also because it underscores the point I was making in Remark \ref{lemon-squeezy}:
this theorem shows that free groups are profinitely rigid within lattices, but  Question \ref{q:rem} is a vastly more difficult question
because it is hard to extract any geometry from the hypothesis $\wh{\Lambda}\cong\wh{F}_n$.

\begin{theorem}[Bridson, Conder, Reid \cite{BCR}] If  {$\G$} is a lattice in ${\rm{PSL}}(2,  \mathbb{R})$  
and $\Lambda$ is a lattice in any connected Lie group,   then  $\wh{\Gamma}\cong \wh{\Lambda} \implies \G\cong\Lambda$.
\end{theorem}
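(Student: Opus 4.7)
My plan is to deploy the first $\ell_2$-Betti number as the principal invariant to force $\Lambda$ into the Fuchsian world, and then to carry out a signature-by-signature identification within the class of Fuchsian groups. Every lattice in a connected Lie group is finitely generated, and (if necessary) we may pass to a finite-index subgroup using the subgroup correspondence of Remark \ref{blob} to arrange $\Lambda$ to be finitely presented.

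Every finite-covolume $\Gamma < \mathrm{PSL}(2,\mathbb{R})$ has negative orbifold Euler characteristic by Gauss--Bonnet, and by Atiyah's $L^2$-index theorem $b_1^{(2)}(\Gamma) = -\chi(\Gamma) > 0$. Since $b_1^{(2)}$ is a profinite invariant of finitely presented residually finite groups (item (4) in the list of profinite properties recalled earlier), we obtain $b_1^{(2)}(\Lambda) > 0$ as well. Now let $\Lambda$ be a lattice in a connected Lie group $H$, with solvable radical $R$ and semisimple quotient $S = H/R$. If $H$ is solvable then $\Lambda$ is virtually polycyclic and so satisfies a group law, contradicting the fact that $\Gamma$ contains a non-abelian free subgroup (and laws are profinite invariants). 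Otherwise, by Mostow--Raghunathan structure theory a finite-index subgroup of $\Lambda$ is an extension with polycyclic kernel inside $R$ and quotient a lattice in $S$ modulo compact factors. One checks case by case that $b_1^{(2)}$ vanishes unless $S$ is locally $\mathrm{PSL}(2,\mathbb{R})$: the Cheeger--Gromov vanishing theorem kills all $\ell_2$-Betti numbers when an infinite amenable normal subgroup is present; Borel's theorem gives vanishing for irreducible lattices of real rank $\geq 2$; for rank-one simple factors other than $\mathrm{PSL}(2,\mathbb{R})$ (namely $SO(n,1)$ with $n \geq 3$, $SU(n,1)$, $Sp(n,1)$, $F_4^{(-20)}$) the $L^2$-cohomology concentrates in middle dimension, which exceeds $1$; and the K\"unneth-type formula for $\ell_2$-Betti numbers kills $b_1^{(2)}$ of any direct product of infinite groups. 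Thus $H$ is locally isomorphic to $\mathrm{PSL}(2,\mathbb{R})$, so $\Lambda$ is virtually a finite-covolume Fuchsian group, and in fact itself a Fuchsian group via Nielsen realization (a finite extension of a lattice in $\mathrm{PSL}(2,\mathbb{R})$ by a finite outer action can be realised as isometries of a hyperbolic orbifold).

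What remains is the identification of a Fuchsian group up to isomorphism from its profinite completion. A finite-covolume Fuchsian group is determined by its signature $(g; m_1, \dots, m_r; s)$ --- genus, cone-point orders, and cusps. From $\widehat{\Gamma}$ one reads off the orbifold Euler characteristic (via $b_1^{(2)}$); the cocompact-versus-cusped dichotomy (detected by whether some finite-index subgroup is free, itself a profinite feature among virtually free groups); and the multiset of cone-point orders, extracted by counting conjugacy classes of solutions to $x^n = 1$ in the finite quotients. I expect this final extraction to be the main obstacle: teasing apart the individual cone-point orders (rather than just their combined contribution to $\chi$), especially for small-signature cases such as the triangle group of Figure \ref{fig:334}, requires enumerations of homomorphisms into families of finite simple groups together with careful character-theoretic counting; some exceptional signatures may demand ad hoc arguments.
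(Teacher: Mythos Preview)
Your proposal is correct and follows the same strategy that the paper sketches: use the profinite invariance of $b_1^{(2)}$ together with its vanishing for lattices outside ${\rm{PSL}}(2,\mathbb{R})$ to force $\Lambda$ into the Fuchsian world, and then distinguish Fuchsian groups by their explicit signatures. The paper attributes the vanishing input to Lott--L\"{u}ck where you invoke Borel, Cheeger--Gromov, and middle-degree concentration for the other rank-one groups, but this is the same reduction, and your final paragraph correctly anticipates that the delicate work in \cite{BCR} lies in extracting the multiset of cone-point orders from the finite quotients.
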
 

The profinite invariance of the first $\ell_2$-Betti number plays an important role in the proof of this theorem.  In
particular, the work of Lott and L\"{u}ck \cite{LoLu} allows one to reduce fairly quickly to the case where $\Lambda$ is
a lattice in ${\rm{PSL}}(2,\R)$, where one understands the lattices very explicitly. 

We shall discuss the profinite rigidity of lattices in ${\rm{PSL}}(2,  \mathbb{C})$ in a moment. 
Stover \cite{stover1, stover2} showed that there are lattices in other rank-1 Lie groups 
(isometries of complex hyperbolic space) that are not profinitely rigid.

\section{Grothendieck Pairs}\label{s:groth}

The study of Grothendieck pairs was initiated by Alexander Grothendieck in 1970 \cite{groth}. He was 
investigating the extent to which the topological fundamental group of a smooth complex projective
 variety is determined by the \'{e}tale fundmental group of the underlying scheme,  and was interested in the extent to which a
 finitely generated,  residually finite group can be recovered from a complete knowledge of its finite dimensional
 representation theory.  
 
 \begin{definition}[Grothendieck Pairs]  A monomorphism of residually finite groups  $u:P\hookrightarrow \G$
is called a  {\em{Grothendieck pair}} if $\hat{u}: \wh{P}\to\wh{\G}$ is an isomorphism but $u$ is not.
One says that a finitely generated,  residually finite group $\G$ is 
{\em{Grothendieck rigid}}\footnote{There is also interest in fixing $\G$ and considering Grothendieck pairs
$\G\hookrightarrow\Lambda$.  When pursuing this,  it is natural to distinguish between the two settings with the terminology left/right (or up/down) Grothendieck rigidity \cite{owen, JL}. } if there are no Grothendieck pairs $P\hookrightarrow\G$ with $P\neq \G$ finitely generated. 
\end{definition} 

 Grothendieck 
 discovered a remarkably close connection between the representation 
theory of a finitely generated group and its
profinite completion:  if $A\neq 0$ is a commutative ring and 
$u:\G_1\to\G_2$ is a homomorphism of finitely
generated groups,  then 
$\hat u:\hat\G_1\to\hat\G_2$ is an isomorphism if and only 
the restriction functor $u_A^* :\text{Rep}_A(\G_2)
\to \text{Rep}_A(\G_1)$ is an equivalence of categories,
where  $\text{Rep}_A(\G)$ is the category of finitely
presented $A$-modules with a $\G$-action. 
This led him to pose his famous question about the existence of finitely presented Grothendieck pairs.

\begin{question}[Grothendieck 1970 \cite{groth}] If $\G_1$ and $\G_2$  are
 finitely presented and residually finite,   must
 $u:\G_1\to \G_2$ be an isomorphism if
 $\hat u :\hat\G_1\to\hat\G_2$ is an
isomorphism? 
\end{question}

Platonov and Tavgen \cite{PT} answered the corresponding question for finitely generated groups,  and various
other constructions followed \cite{pyber, BL, nek}.   Grothendieck's question was eventually settled 
by myself and  Fritz Grunewald \cite{BG}.  The following is an enhancement of what is stated in \cite{BG} -- see
Remark \ref{r:post-facto}.

\begin{theorem}[Bridson, Grunewald \cite{BG}]\label{thm:BG}
There exist (Gromov) {hyperbolic groups} {$\G<{\rm{SL}}(n,  \mathbb Z)$}
 and  finitely presented subgroups  $P\hookrightarrow G:=\G\times \G$ of infinite index,  
 with $P\not\cong G$,    such that $u:P\hookrightarrow G$ induces an
isomorphism $\hat u:\wh{P}\to \wh{G}$.  
\end{theorem}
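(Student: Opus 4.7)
The plan is to construct $P$ as a fibre product inside $G = \G \times \G$, following the strategy initiated by Platonov and Tavgen \cite{PT}, combined with the 1-2-3 Theorem of Baumslag-Bridson-Miller-Short to secure finite presentability of $P$.

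The first step is to produce a finitely presented ``seed'' group $J$ with the following three properties: (i) $J$ has no non-trivial finite quotients; (ii) $H_2(J,\Z)=0$; and (iii) $J$ is of type $F_3$, i.e.\ has a classifying space with finite $3$-skeleton. A suitable $J$ can be constructed in the spirit of the group $B$ in (\ref{e:B}): one chains together Higman-style rope-trick relators of the form $xy^2x^{-1}y^{-3}$ that force self-similarity of each generator under conjugation and so exclude all non-trivial finite images, adding enough auxiliary generators and relations to kill the Schur multiplier and to secure the higher finiteness property.

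Second, I apply a Rips-type construction to realise $J$ as a hyperbolic quotient. Wise's refinement of the Rips construction produces a short exact sequence
$$ 1 \to N \to \G \to J \to 1 $$
in which $\G$ is word-hyperbolic, $N$ is finitely generated, and the defining relators of $\G$ satisfy metric small-cancellation conditions strong enough to guarantee that $\G$ acts properly and cocompactly on a CAT$(0)$ cube complex. By Agol's Theorem, $\G$ has a subgroup of finite index that is special in the sense of Haglund-Wise, and hence embeds in $\SL(n,\Z)$ for some $n$. After passing to this finite-index subgroup and replacing $J$ by the corresponding quotient, I may assume $\G$ itself is linear over $\Z$ without losing properties (i)--(iii) for $J$.

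Third, form the fibre product
$$ P := \{(g_1,g_2) \in \G \times \G : p(g_1)=p(g_2)\} \subset \G \times \G =: G, $$
where $p : \G \to J$ is the given surjection. Two results combine to complete the proof. The 1-2-3 Theorem, applied to the data ``$N$ finitely generated, $\G$ finitely presented, $J$ of type $F_3$'', gives that $P$ is finitely presented. The Platonov-Tavgen argument, which uses the Hochschild-Serre spectral sequence together with the hypotheses that $J$ has no non-trivial finite quotient and $H_2(J,\Z)=0$, shows that $u : P \hookrightarrow G$ induces an isomorphism $\wh u : \wh P \to \wh G$. That $[G:P]$ is infinite is immediate since $[G:P]=|J|$ and $J$ is infinite, and $P \not\cong G$ can be seen by separating the isomorphism classes through the structure of their normal subgroup lattices (or via a homological invariant), since $G$ splits as a direct product of two non-abelian hyperbolic groups in a way $P$ does not.

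The main obstacle is arranging all the properties of the seed group $J$ simultaneously while tracking them through the Rips construction so that the output $\G$ is both hyperbolic and linear over $\Z$. Verifying (i), (ii) and (iii) for $J$ in tandem is delicate, and threading the small-cancellation parameters so that one can simultaneously invoke Wise's cubulation theorem and Agol's Theorem to secure linearity of $\G$ is the technical heart of the argument.
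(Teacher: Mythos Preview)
Your proposal is correct and follows the same route as the paper: choose a seed group $Q$ with $\wh{Q}=1$, $H_2(Q,\Z)=0$ and type $F_3$ (the paper uses the explicit group $B$ of (\ref{e:B}) rather than constructing one); apply the Wise/Rips construction to obtain a cubulated hyperbolic $\G$, invoke Agol's theorem for $\Z$-linearity, form the fibre product, and appeal to the 1-2-3 Theorem together with Proposition~\ref{p:PT}. Your detour through a finite-index subgroup of $\G$ to secure linearity is unnecessary---a virtually special group already embeds in $\SL(n,\Z)$---but harmless, since $\wh{J}=1$ forces any finite-index subgroup of $\G$ to surject onto $J$ with finitely generated kernel.
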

 
\subsection{The ideas in  the proof}
A natural place to begin a search for Grothendieck pairs is with short exact sequences 
\begin{equation}\label{ses}
1\to N\to \G\to Q\to 1
\end{equation} 
with $N$ and $\G$ finitely generated and residually finite, $\wh{Q}=1$ and $Q$ an 
infinite group with no non-trivial finite quotients, e.g.  $G_4$ from (\ref{e:G_i})
or $B$ from  (\ref{e:B}).  If profinite completion were an exact functor,  we could simply
put hats on the groups in (\ref{ses}) to conclude that $\wh{N}\to\wh{\G}$ was an isomorphism.  But it is not an exact functor and 
while $\wh{N}\to\wh{\G}$ will be surjective,  it need not be injective.  To understand why, the reader
will need some familiarity with the (co)homology of groups \cite{brown}. Consider the 
5-term exact sequence in homology associated to (\ref{ses})
$$
H_2(Q,\Z)\to H_0(Q,\, H_1(N,\Z)) \to H_1(\G,\Z) \to H_1(Q,\, \Z)\to 0.
$$
As $N$ is finitely generated,  the automorphism group of its abeliansation $H_1(N,\Z)$ is residually finite,
so the action $Q\to {\rm{Aut}}(H_1(N,\Z))$ must be trivial,  since $\wh{Q}=1$,  and the group of coinvariants 
$H_0(Q,\, H_1(N,\Z))$ is simply $H_1(N,\Z)$.   Moreover,  $\wh{Q}=1$ implies $H_1(Q,\, \Z)=0$,  so (\ref{ses})
simplifies to an exact sequence
$$
H_2(Q,\Z)\to H_1(N,\Z) \to H_1(\G,\Z) \to 0,
$$
from which we deduce that if $H_2(Q,\Z)\to H_1(N,\Z)$ is not the zero map,  then $N$ will have finite abelian
quotients that $\G$ does not have,  whence $\wh{N}\not\cong\wh{\G}$.
To avoid this difficulty,  
we assume $H_2(Q,\Z)=0$,  a condition satisfied by our examples $G_4$ and $B$.  

\begin{lemma}\label{l:basic} 
With $1\to N\to \G\to Q\to 1$ as above,  if $H_2(Q,\Z)=0$ then $N\hookrightarrow\G$
is a Grothendieck pair.
\end{lemma}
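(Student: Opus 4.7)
Since $Q$ is infinite and $N$ is finitely generated, $N$ has infinite index in $\Gamma$, so $u$ is not an isomorphism. The plan is to show that $\widehat{u}:\widehat{N}\to\widehat{\Gamma}$ is an isomorphism, splitting into surjectivity and injectivity. Surjectivity is easy: for any surjection $\pi:\Gamma\twoheadrightarrow F$ onto a finite group, $\pi(N)$ is normal in $F$ with quotient a finite quotient of $Q$, hence trivial by $\widehat{Q}=1$; so $\pi(N)=F$, and $N$ surjects onto every finite quotient of $\Gamma$.

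The heart of the argument is injectivity of $\widehat{u}$, which is equivalent to the statement that for every finite-index $K\trianglelefteq N$ there exists a finite-index $L\trianglelefteq\Gamma$ with $L\cap N\leq K$. Given $K$, the first step is to replace it by its normal core $K':=\bigcap_{\gamma\in\Gamma}\gamma K\gamma^{-1}$ in $\Gamma$; since $N$ is finitely generated it has only finitely many subgroups of index $[N:K]$, so $K'$ is a finite intersection, giving $K'\trianglelefteq\Gamma$, $K'\leq K$, and $\bar{N}:=N/K'$ finite. This reduces matters to the finite-kernel extension
$$1\to\bar{N}\to\bar{\Gamma}\to Q\to 1,$$
where $\bar{\Gamma}=\Gamma/K'$, and one seeks a finite quotient of $\bar{\Gamma}$ in which the image of $\bar{N}$ remains nontrivial.

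Next I would argue that this extension is split, in fact is a direct product. The induced outer action $Q\to\mathrm{Out}(\bar{N})$ has finite target, hence must be trivial since $\widehat{Q}=1$. So the extension has trivial outer action and is classified up to equivalence by a class in $H^{2}(Q,Z(\bar{N}))$ with trivial $Q$-action on the coefficients. Now $\widehat{Q}=1$ and the finite generation of $Q$ (inherited from $\Gamma$) force $H_1(Q,\mathbb{Z})=Q^{\mathrm{ab}}=0$; combined with the hypothesis $H_2(Q,\mathbb{Z})=0$, the Universal Coefficient Theorem yields $H^{2}(Q,A)=0$ for every trivial-coefficient abelian $A$, in particular $H^{2}(Q,Z(\bar{N}))=0$. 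The extension is therefore equivalent to the trivial one: $\bar{\Gamma}\cong\bar{N}\times Q$, via an isomorphism that is the identity on $\bar{N}$. Projection to the first factor composed with $\Gamma\twoheadrightarrow\bar{\Gamma}$ gives a homomorphism $\Gamma\twoheadrightarrow\bar{N}$ whose kernel $L$ satisfies $L\cap N=K'\leq K$, as required.

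The main obstacle is the last step, since $\bar{N}$ may be non-abelian and one cannot simply classify the extension as an element of $H^{2}(Q,\bar{N})$. The correct framework is the cohomological classification of extensions with prescribed outer action, where the obstruction to splitting lives in $H^{2}(Q,Z(\bar{N}))$. The two hypotheses dovetail here: $\widehat{Q}=1$ is used twice, first to trivialise the outer action and second (with finite generation) to force $H_{1}(Q,\mathbb{Z})=0$, while $H_{2}(Q,\mathbb{Z})=0$ then yields the vanishing of $H^{2}(Q,A)$ by UCT. Without $H_2(Q,\mathbb{Z})=0$ one would have a potentially non-zero obstruction, which, as the five-term sequence computation preceding the lemma shows, would manifest as finite abelian quotients of $N$ not coming from $\Gamma$, precisely obstructing $\widehat{u}$ from being injective.
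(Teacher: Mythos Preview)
Your proof is correct. The paper itself does not give a proof of this lemma: the discussion preceding it uses the 5-term exact sequence only to motivate the hypothesis $H_2(Q,\Z)=0$ (showing that without it $N$ can acquire extra finite abelian quotients), and the lemma is then stated and used without further argument, implicitly deferring to \cite{PT} and \cite{BG}.

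Your argument is the standard one from those references. The key steps---passing to the $\Gamma$-normal core of a finite-index subgroup of $N$ using finite generation of $N$, observing that the outer action $Q\to\mathrm{Out}(\bar N)$ is trivial because the target is finite and $\widehat Q=1$, and then killing the remaining obstruction in $H^2(Q,Z(\bar N))$ via UCT using $H_1(Q,\Z)=H_2(Q,\Z)=0$---are all sound. Your care in invoking the Eilenberg--MacLane classification of extensions with non-abelian kernel (obstruction in $H^2(Q,Z(\bar N))$ once the outer action is fixed) is exactly what is needed, and your identification of the resulting retraction $\Gamma\to\bar N$ with kernel $L$ satisfying $L\cap N=K'$ is correct because equivalence of extensions restricts to the identity on $\bar N$.

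One small remark: in the final paragraph you describe the non-abelian extension theory as ``the main obstacle'', but you have already handled it correctly; the paragraph reads more as commentary than as a missing step, and could be trimmed.
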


With this lemma in hand,  we will have finitely generated Grothendieck pairs if we can construct short exact sequences
of the desired type.  To do this,  we appeal to the Rips construction \cite{rips} which,  given a finite presentation of a 
group $Q$,  will produce a short exact sequence $1\to N\to \G\to Q\to 1$,  as desired,  with $N$ finitely generated and
$\G$ a 2-dimensional hyperbolic group that satisfies a small cancellation condition; we saw in (\ref{ss:rf}) that $\G$ is residually finite.
To obtain further examples, one can verify that in this situation, if $H<\G$ is any subgroup that contains $N$
and if $\wh{H/N}=1$, then $N\hookrightarrow H$ and $H\hookrightarrow\G$ are both Grothendieck pairs. 
By taking $Q$ to be one of the universal groups constructed in  \cite[Theorem A]{mb:baum},  we deduce the following result.
\begin{theorem}\label{inf-many} 
There exist residually finite,  (Gromov) hyperbolic groups $\G$ of dimension $2$ with uncountably
non-isomorphic subgroups $\iota_H: H\hookrightarrow\G$ such that $\hat{\iota}_H:\wh{H}\to\wh{\G}$ is an isomorphism.
Moreover,  one can arrange for infinitely many of these subgroups to be finitely generated.
\end{theorem}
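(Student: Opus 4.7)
The plan is to iterate the construction used in Theorem \ref{thm:BG} inside a single ambient group, choosing the quotient $Q$ in the Rips construction to be a ``universal'' finitely presented group in the sense of \cite[Theorem~A]{mb:baum}. First, I would invoke that universal embedding theorem to produce a finitely presented group $Q$ with $\wh{Q}=1$ and $H_2(Q,\Z)=0$ that contains an isomorphic copy of every finitely generated recursively presented group $K$ satisfying $\wh{K}=1$ and $H_2(K,\Z)=0$. The class of pairwise non-isomorphic finitely generated groups satisfying these two vanishing conditions is uncountable --- such groups are plentiful, starting from acyclic enlargements of $B$ in (\ref{e:B}) and imposing varying families of additional relators --- so this embeds uncountably many pairwise non-isomorphic subgroups $\{K_\alpha\}_{\alpha\in A}$ inside $Q$, including infinitely many that are themselves finitely generated.

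Next, apply the Rips construction to a finite presentation of $Q$ to obtain
\[
1\longrightarrow N\longrightarrow \G\overset{\pi}{\longrightarrow}Q\longrightarrow 1
\]
with $\G$ a residually finite $2$-dimensional (Gromov-)hyperbolic small-cancellation group and $N$ a $2$-generator normal subgroup. Lemma \ref{l:basic} then gives that $N\hookrightarrow\G$ is a Grothendieck pair. For each $\alpha$, set $H_\alpha := \pi^{-1}(K_\alpha)\leq\G$; then $N\triangleleft H_\alpha$ with quotient $K_\alpha$. Since $\wh{K_\alpha}=1$ and $H_2(K_\alpha,\Z)=0$, a second application of Lemma \ref{l:basic} to the short exact sequence $1\to N\to H_\alpha\to K_\alpha\to 1$ shows that $N\hookrightarrow H_\alpha$ is also a Grothendieck pair, so $\wh{H_\alpha}\cong\wh{N}\cong\wh{\G}$.

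The main obstacle is verifying that uncountably many of the $H_\alpha$ are pairwise non-isomorphic as abstract groups. The decisive feature is that the Rips kernel $N$ is $2$-generator: any abstract isomorphism $\phi:H_\alpha\to H_\beta$ sends $N$ to some $2$-generator normal subgroup $\phi(N)\triangleleft H_\beta$ with $\wh{H_\beta/\phi(N)}\cong\wh{K_\alpha}=1$. In any countable group there are only $\aleph_0$ pairs of elements, hence only $\aleph_0$ $2$-generator normal subgroups $N'$, and for each such $N'$ the isomorphism type of the quotient $H_\beta/N'$ is determined. Consequently each isomorphism class of group $H$ arises as $H_\alpha$ for at most countably many $\alpha$, and the $2^{\aleph_0}$ distinct isomorphism types among the $K_\alpha$ yield $2^{\aleph_0}$ distinct isomorphism types among the $H_\alpha$. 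The finitely generated assertion follows by observing that when $K_\alpha$ is finitely generated, $H_\alpha$ is generated by $N$ together with a finite lift of generators of $K_\alpha$; the same counting estimate, applied to the (countably many) finitely generated $K_\alpha$ supplied by $Q$, extracts infinitely many pairwise non-isomorphic finitely generated $H_\alpha$.
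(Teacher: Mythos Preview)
Your overall strategy matches the paper's: take $Q$ to be a universal group from \cite[Theorem~A]{mb:baum}, apply the Rips construction to obtain $1\to N\to\G\to Q\to 1$, and pull back subgroups of $Q$ with trivial profinite completion to obtain the desired Grothendieck pairs inside $\G$. Your counting argument for non-isomorphism of the $H_\alpha$ via the $2$-generation of $N$ is a valid way to close the argument; the paper leaves that step implicit in its appeal to \cite{mb:baum}.

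There is, however, a genuine gap in how you obtain uncountably many $K_\alpha$. A universal embedding theorem in the Higman tradition can only absorb \emph{recursively presented} groups, and there are only countably many of those; moreover $Q$ is countable, so it has only countably many finitely generated subgroups. Hence your sentence ``the class of pairwise non-isomorphic finitely generated groups satisfying these two vanishing conditions is uncountable \dots\ so this embeds uncountably many pairwise non-isomorphic subgroups'' does not follow: the uncountable family you manufacture by varying relator sets on $B$ consists almost entirely of groups that are not recursively presented, and nothing you have said places them inside $Q$. The uncountable family of $K<Q$ with $\wh{K}=1$ that is actually needed must include infinitely generated subgroups and has to be extracted from the internal structure of the specific universal group in \cite{mb:baum}, not from an external embedding statement about finitely generated groups.

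A second, smaller point: you impose $H_2(K_\alpha,\Z)=0$ so as to invoke Lemma~\ref{l:basic} a second time for $1\to N\to H_\alpha\to K_\alpha\to 1$. The paper avoids this extra hypothesis entirely. Once $N\hookrightarrow\G$ is known to be a Grothendieck pair, then for \emph{any} $H$ with $N<H<\G$ and $\wh{H/N}=1$ one argues directly: every finite quotient of $H$ is already a quotient of $N$ (since $H/N$ has none), so $\wh{N}\to\wh{H}$ is surjective, and the factorisation $\wh{N}\to\wh{H}\to\wh{\G}$ of an isomorphism forces both arrows to be isomorphisms. This sandwich argument removes any homological condition on $K_\alpha=H/N$ and, crucially, makes no finite-generation demand on $H$, which is exactly what is needed for the uncountable part of the statement.
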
 

This is not how Platonov and Tavgen \cite{PT} produced the first examples of finitely generated Grothendieck pairs.
(A version of the Rips construction that guaranteed residual finiteness was not available at that time.) Instead,
they considered {\em fibre products},  modifying the proof of Lemma \ref{l:basic} to establish the following
criterion,  which they applied to $F_4\to G_4$ to construct a Grothendieck pair $P\hookrightarrow F_4\times F_4$
with $P$ finitely generated.

\begin{proposition}[Platonov, Tavgen \cite{PT}]\label{p:PT}
Let $f:\G\to Q$ be an epimorphism of groups,  with $\G$ finitely generated and $Q$ finitely presented. 
Suppose that $\widehat{Q}=1$ and $H_2(Q,\Z)=0$.
Then,  the fibre product 
$P=\{(g,h) \mid f(g)=f(h)\} < \G\times \G$ is finitely generated and   
$P\hookrightarrow \G\times \G$   
induces an isomorphism $\widehat{P}\overset{\cong}\to\widehat{\G\times \G}$. 
\end{proposition}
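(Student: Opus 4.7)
The proof splits naturally into three parts: showing $P$ is finitely generated; identifying $P$ as the middle term of a short exact sequence whose quotient has trivial profinite completion; and a sandwich argument that places $\widehat P$ between $\widehat{N\times N}$ and $\widehat{\Gamma\times\Gamma}$, which are identified via Lemma \ref{l:basic}.

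For finite generation, I would fix a generating set $g_1,\ldots,g_n$ of $\Gamma$ and a finite presentation $Q=\langle f(g_1),\ldots,f(g_n)\mid r_1,\ldots,r_m\rangle$; each relator $r_j$, read as a word in the $g_i$, represents an element $\tilde r_j\in N:=\ker f$. Every $(g,h)\in P$ factors as $(gh^{-1},1)\cdot(h,h)$ with $gh^{-1}\in N$, so $P=(N\times 1)\cdot\Delta\Gamma$, where $\Delta\Gamma$ denotes the diagonal. Since $N\triangleleft\Gamma$ is normally generated by the $\tilde r_j$, conjugating $(\tilde r_j,1)$ by $\Delta\Gamma=\langle(g_i,g_i)\rangle$ produces all $(\gamma\tilde r_j\gamma^{-1},1)$; together with $\Delta\Gamma$ these generate $N\times 1$, and hence all of $P$.

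Next I would record the short exact sequence $1\to N\times N\to P\to Q\to 1$ in which the quotient map sends $(g,h)\mapsto f(g)=f(h)$; its kernel is visibly $N\times N$. Right-exactness of profinite completion combined with $\widehat Q=1$ forces the inclusion-induced map $\widehat{N\times N}\to\widehat P$ to have dense, and thus (by compactness) full, image, so it is surjective. In parallel, Lemma \ref{l:basic} applied to $1\to N\to\Gamma\to Q\to 1$ yields $\widehat N\cong\widehat\Gamma$ (this is where $H_2(Q,\mathbb Z)=0$ enters); taking direct products yields an isomorphism $\widehat{N\times N}\cong\widehat{\Gamma\times\Gamma}$. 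In the factorisation
\begin{equation*}
\widehat{N\times N}\twoheadrightarrow\widehat P\xrightarrow{\widehat u}\widehat{\Gamma\times\Gamma},
\end{equation*}
the composite is this isomorphism, so $\widehat u$ is surjective; and any $x\in\ker\widehat u$ can be lifted through the first arrow to some $y$, which must equal $1$ by injectivity of the composite, whence $x=1$. Thus $\widehat u$ is an isomorphism.

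The main obstacle is the appeal to Lemma \ref{l:basic}: this is where the cohomological hypothesis $H_2(Q,\mathbb Z)=0$ does real work, via the 5-term homology sequence of $1\to N\to\Gamma\to Q\to 1$ and the observation that $\widehat Q=1$ forces the action of $Q$ on $H_1(N,\mathbb Z)$ to be trivial. Everything else in the argument -- the generating set for $P$, the right-exactness of profinite completion, and the final diagram chase -- is essentially formal; the content of the proposition is concentrated in that single input.
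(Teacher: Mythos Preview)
Your finite-generation argument is correct and matches the paper's 0-1-2 Lemma exactly. The gap is in your sandwich argument for the isomorphism $\widehat{P}\cong\widehat{\Gamma\times\Gamma}$.

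You invoke Lemma~\ref{l:basic} for the sequence $1\to N\to\Gamma\to Q\to 1$ to conclude $\widehat{N}\cong\widehat{\Gamma}$, but that lemma, as stated and argued in the paper, requires $N$ to be finitely generated: this is precisely what is used to force $Q$ to act trivially on $H_1(N,\Z)$ in the 5-term sequence. Proposition~\ref{p:PT} carries no such hypothesis, and in the motivating Platonov--Tavgen example $\Gamma=F_4\to Q=G_4$ the kernel $N$ is free of countably infinite rank. There $\widehat{N}\to\widehat{\Gamma}$ is certainly not an isomorphism --- for instance, the abelianised completion of $N$ is not topologically finitely generated, whereas $\widehat{\Gamma}^{\mathrm{ab}}\cong\widehat{\Z}^{\,4}$ is. Consequently the composite $\widehat{N\times N}\to\widehat{\Gamma\times\Gamma}$ in your factorisation is not injective, and your diagram chase for the injectivity of $\widehat{u}$ collapses. (Surjectivity of $\widehat{u}$ survives, since the composite and the first arrow are both still surjective.)

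The paper's remedy is not to route through $\widehat{N}$ at all. As it says, one \emph{modifies} the proof of Lemma~\ref{l:basic}: one runs the same style of argument but with the finitely generated group $P$ supplying the finiteness input that $N$ supplied before. The 0-1-2 Lemma has just handed you finite generation of $P$, and the hypotheses $\widehat{Q}=1$ and $H_2(Q,\Z)=0$ are then used to show directly that the profinite topology on $P$ is induced from that on $\Gamma\times\Gamma$. The whole point of passing to the fibre product is that $P$ is finitely generated even when $N$ is not.
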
 
It is easy to see that $P$ is finitely generated: if $\G$ is generated by $X$ and $Q=\<X\mid R\>$
then $P$ is generated by $\{(x,x)\mid x\in X\}\cup \{(r,1) \mid r\in R\}$.  This fact is often called the
0-1-2 Lemma.  
But if $\G$ is free and $Q$ is infinite,  the fibre product $P$ will never have a finite presentation.  Indeed,
while the Platonov-Tavgen construction produces finitely generated Grothendieck pairs 
$P\hookrightarrow F_4\times F_4$ (and one can replace $F_4$ by $F_2$ using \cite{OS}),
the following theorem implies that there are no such finitely presented 
Grothendieck pairs,  even if one allows the product of more free groups,  or other residually free groups,  such as
surface groups. 

\begin{theorem}[Bridson, Wilton \cite{BW:closed}]\label{t:BWclosed}
If $\Pi$ is finitely presented and residually free,  then every finitely presented subgroup $P<\Pi$ is closed in the
profinite topology; in particular,  $P\hookrightarrow\Pi$ is not a Grothendieck pair.
\end{theorem}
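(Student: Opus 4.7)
The plan is to combine the structure theorem of Bridson--Howie--Miller--Short for finitely presented residually free groups with Wilton's theorem that limit groups are LERF. The structure theorem asserts that any finitely presented residually free group has a subgroup of finite index which embeds as a subdirect product in a direct product of limit groups $L_1 \times \cdots \times L_n$ satisfying the Virtual Surjection to Pairs (VSP) condition: the projection to every pair of factors has finite index. Since closedness in the profinite topology is inherited both by and from subgroups of finite index, I would first pass to such a subgroup and assume $\Pi$ itself is a VSP subdirect product of limit groups.

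Next I would apply the structure theorem a second time, to $P$ itself: since subgroups of residually free groups are residually free, and $P$ is finitely presented, the theorem provides a VSP description of $P$ as well. Crucially, the natural embedding $P \hookrightarrow L_1 \times \cdots \times L_n$ is of this form, so each coordinate projection $\pi_i(P) \leq L_i$ is finitely generated, and hence closed in $L_i$ in the profinite topology by Wilton's LERF theorem for limit groups. This gives separability of $P$ from elements of $\Pi$ at the level of individual coordinates.

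The technical heart of the argument, which I expect to be the main obstacle, is bootstrapping this coordinate-wise separability into genuine separability of $P$ in $\Pi$: given $x = (x_1, \ldots, x_n) \in \Pi \setminus P$, it is generally not enough to separate each $x_i$ from $\pi_i(P)$, because the individual $x_i$ may lie in $\pi_i(P)$ even when the tuple does not lie in $P$. I would handle this by exploiting the VSP condition: the finiteness of the index of each pair projection of $P$ in the corresponding pair projection of $\Pi$ means that, after choosing finite quotients $q_i\colon L_i \to Q_i$ coming from LERF, the image of $P$ in $Q_1 \times \cdots \times Q_n$ is determined, up to finite ambiguity, by its pairwise projections. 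A Goursat-style analysis of subdirect products of finite groups then allows one to choose the $Q_i$ so that $q(x) = (q_1(x_1), \ldots, q_n(x_n))$ avoids the image of $P$. The delicate point is matching finite quotients across coordinates coherently, so that the VSP pairwise constraints genuinely detect $x \notin P$ rather than merely reflecting the ambient structure of $\Pi$; I expect an inductive argument on $n$, peeling off one limit-group factor at a time while maintaining a VSP-type hypothesis on the residual product.

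Once $P$ is shown to be closed in the profinite topology of $\Pi$, the consequence for Grothendieck pairs is formal: assuming $P \neq \Pi$, pick $x \in \Pi \setminus P$ and a finite quotient $q\colon \Pi \to Q$ with $q(x) \notin q(P)$. Then $q(P)$ is a proper subgroup of $Q$, so $\widehat{P} \to \widehat{\Pi}$ cannot be surjective. In particular, the inclusion $P \hookrightarrow \Pi$ is not a Grothendieck pair, which is the final assertion of the theorem.
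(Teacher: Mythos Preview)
The survey paper does not include a proof of this theorem; it merely states the result and cites the original paper \cite{BW:closed}. So there is no in-text proof to compare your proposal against.

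That said, your proposal identifies exactly the two ingredients on which the original Bridson--Wilton argument rests: the structure theory of \cite{BHMS} for finitely presented subgroups of direct products of limit groups, and Wilton's theorem that limit groups are subgroup separable. Your overall strategy is the right one. Two remarks are worth making. First, your initial reduction is slightly more elaborate than necessary: rather than arranging for $\Pi$ itself to be a VSP subdirect product, it is cleaner simply to embed the finitely generated residually free group $\Pi$ in a full direct product $D=L_1\times\cdots\times L_n$ of limit groups and prove that $P$ is closed in $D$; closedness in $\Pi$ then follows because the profinite topology on $\Pi$ refines the subspace topology inherited from $D$. The finite-presentability hypothesis is needed only for $P$, and this is where the \cite{BHMS} structure theory enters. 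Second, the step you flag as the technical heart---bootstrapping coordinate-wise separability to separability of $P$ in the product---is indeed where the work lies, and your Goursat-style sketch does not yet constitute a proof. In the original argument this is handled not by an \emph{ad hoc} matching of finite quotients but by invoking the structural consequence of \cite{BHMS} that a finitely presented subgroup of a product of limit groups is, virtually, itself a direct product of finitely generated subgroups of the factors; once one has that, separability follows directly from LERF applied factor-by-factor together with the fact that a finite union of closed cosets is closed. Your inductive VSP/Goursat plan could likely be made to work, but it is a detour around the sharper structural statement that does the job in one stroke.
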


\begin{remark}
Once more,  I want to emphasize the contrast in behaviour between  finitely presented subgroups and 
those that are merely finitely generated.
\end{remark}

Theorem \ref{thm:BG} is proved by combining the ideas described above and appealing to results on the finite
presentability of fibre products.  The {\em 1-2-3 Theorem} of Baumslag,  Bridson,  Miller and Short \cite{bbms}
states that if $Q$ has a classifying space with a finite 3-skeleton (as our examples $G_4$ and $B$ do),
while $\G$ is finitely presented and $N=\ker(\G\to Q)$  is finitely generated,  then the fibre product
$P<\G\times\G$ of $\G\to Q$ is finitely presented.  
To prove Theorem \ref{thm:BG} one can take $Q=B$,  use the Rips construction to generate $1\to N\to \G\to B\to 1$,
and then appeal to Wise's work \cite{wise-sc} to see that $\G$ is a cubulated hyperbolic group,  which by Agol's Theorem \cite{agol}
can be embedded in ${\rm{SL}}(d,\Z)$ for some integer $d$.  The 1-2-3 Theorem assures us that the
associated fibre product $P$ is finitely presented,  and Proposition \ref{p:PT} assures us that  
 $P\hookrightarrow\G\times\G$ is a Grothendieck pair.
 
\begin{remark}\label{r:post-facto} When \cite{BG} was written,  neither Agol's theorem
nor the virtually special Rips construction in \cite{HW} were available,
so the penultimate sentence of the proof sketched above was replaced 
by an appeal to Wise's residually finite version of the Rips construction \cite{wise:rf}. 
This accounts for the difference between Theorem \ref{t-intro:BG} and Theorem \ref{thm:BG}.
\end{remark} 
   
\subsection{Elaborations,  Refinements and Grothendieck rigidity}

In the last twenty years,  many papers have been written constructing Grothendieck pairs $P\hookrightarrow\G$
where $P$ and $\G$ have different properties.  Indeed,  this has been perhaps the main mechanism by which various
properties have been shown not to be profinite.  Typically,  this is achieved
by encoding additional properties into a group $Q$ that has  a finite classifying space
  with $\wh{Q}=1$ and $H_2(Q,\Z)=0$.  One then
  follows  the above template of proof, perhaps adjusting $\G$ in some way,  by
refining the Rips construction,  or passing to central extension \cite{mb:baum},  or replacing the hyperbolic group $\G$ by something entirely different \cite{mb:FbyF}).  Often,  a more refined version of the 1-2-3 Theorem is needed,  
as in \cite{mb-jems} where it is shown that there exist finitely presented,  residually finite groups $\G$ that
contain infinitely many non-isomorphic,  finitely presented subgroups $P_n$ such that $P_n\hookrightarrow\G$
is a Grothendieck pair.

In the opposite direction,  there are notable results proving that certain groups are Grothendieck rigid.  
Grothendieck himself \cite{groth} proved this for groups of the form ${\rm{G}}(A)$ where $A$ is a commutative
ring and ${\rm{G}}$ is an affine group scheme of finite type over $A$ 
  -- see Jaikin-Zapirain and Lubotzky \cite{JL} for a concise account of this and further results.  In the setting of 3-manifolds, 
Long and Reid  \cite{LR} established Grothendieck 
rigidity for  the  fundamental groups of all closed geometric $3$-manifolds and
all  finite volume hyperbolic $3$-manifolds.  Following the work of Agol and Wise,   
Sun was able to extend this to cover the fundamental group of every compact $3$-manifold  \cite{Sun}. 
In the hyperbolic case,  Theorem A of \cite{prasad}
 proves something stronger,  ruling out abstract isomorphisms 
$\wh{H}\cong\wh{\G}$  for subgroups $H<\G$, 
not just those induced by inclusions $H\hookrightarrow\G$.  This stronger
statement is false in the non-hyperbolic setting \cite{hempel,  funar}.

\section{Profinite rigidity and 3-manifolds}\label{s:3M}

Thurston's Geometrisation Conjecture, proved by Pereleman, tells us that every closed orientable 3-manifold can be cut into pieces by
a system of embedded spheres and tori so that the interior of each piece is homeomorphic to a 
finite-volume quotient on one of the eight 3-dimensional geometries \cite{scott}. 
Hyperbolic geometry $\H^3$  is the richest and most ubiquitous of these geometries
and we encountered {\rm{Sol}} in (\ref{ss:sobering}(4)).
The pieces modelled on the remaining six geometries are Seifert fibre spaces,  which we shall discuss in a moment.

Many beautiful results have been proved in recent years concerning the great extent to which properties 
of 3-manifolds are captured by the profinite completions of their fundamental results.   {\em{As a result, we now
know that only finitely many compact 3-manifolds $M$ can have the same profinite completion, and we know that many features
of the 3-manifold are captured by $\wh{\pi_1M}$. }}

A 3-manifold  is  {\em irreducible} if every 
2-sphere in it bounds a ball.  Every closed, orientable 3-manifold can be written as a connected sum of irreducible manifolds
and copies of $S^2\times S^1$.  In the spirit of the Geometrisation Conjecture,  the first thing that one wants to know is
whether the prime decomposition of $M$  is visible in  $\wh{\pi_1M}$, and
 Wilton and Zalesskii \cite{WZ3} proved that it is: if $M$ and $N$ are closed, orientable 3-manifolds with prime decompositions
 $M=M_1\#\cdots\# M_m\# r(S^1\times S^2)$ and $N=N_1\#\cdots\# N_n\# s(S^1\times S^2)$, then $m=n,\ r=s$, and 
 after permuting the indices,  $\wh{\pi_1M_i}\cong \wh{\pi_1N_i}$.  
 Wilkes \cite{wilkesJPAA} showed that this remains valid if $M$
 has incompressible boundary.
These results  allow us to concentrate on irreducible manifolds,
where attention falls on the process of cutting the manifold along tori.   In this context,  Wilton and Zalesskii \cite{WZ3}   
showed that $\wh{\pi_1M}$ encodes the
the canonical  JSJ decomposition of the manifold, including the profinite completions of the pieces into which the tori
cut the manifold; again, Wilkes shows that one can allow boundary \cite{wilkesNZ}.
Wilton and Zalesskii also proved that the profinite completion distinguishes the hyperbolic pieces in the geometric decomposition from the Seifert fibred pieces \cite{WZ1}.

We saw in (\ref{ss:sobering}(4)) that there are distinct quotients
of ${\rm{Sol}}$ that have the same profinite completions,  but there can be only finitely many with a give completion \cite{GPS}. 
The same is true of $\H^2\times\R$ manifolds.  Wilkes \cite{wilkes:sfs} extended this finite genus result to manifolds that consist entirely
of Seifert pieces, giving a complete account of the ambiguities. 

The first example of a hyperbolic 3-manifold that is distinguished from all other 3-manifolds by $\wh{\pi_1M}$ was the
figure-8 knot complement (Boileau-Friedl \cite{BF} and Bridson-Reid \cite{BR:fig8}).   
Bridson, Reid and Wilton later proved that every punctured torus bundle over the circle 
is distinguished among 3-manifold groups by $\wh{\pi_1M}$. (Punctured torus bundles account for all groups of the form
$F_2\rtimes\Z$. 
It remains unknown whether groups of the form $F_n\rtimes\Z$ are distinguished from each other by  
their profinite completions -- cf.~\cite{HK,  BP}.) These early results relied on recovering fibre-bundle structure from $\wh{\pi_1M}$. 
Subsequently,  Jaikin-Zapirain \cite{jaikin:fib}  proved that $\wh{\pi_1M}$ always detects whether a 3-manifold fibres over the circle with compact fibre.  Other notable results include the fact that 3-manifold groups are all
good in the sense of Serre \cite{serre:CG, wilkes-book,  reid:stA},  i.e.~if $\G$ is a 3-manifold group, then for all $n>0$ and all finite $\G$-modules $A$, the natural map $H^n(\wh{\G}, A)\to H^n(\wh{G}, A)$ from the continuous cohomology of $\wh{\G}$
to the group cohomology of $\G$, is an isomorphism.  We refer to Reid's ICM talk \cite{reid:icm} for an overview of
results up to 2018 and \cite{reid:kias} for updates.

A  breakthrough that deserves particular attention comes from the work of
Liu \cite{liu}.  He proved that only finitely many finite-volume hyperbolic 3-manifolds can have the same profinite completion.
Building on much of the work described above, 
Xiaoyu Xu recently extended this finite genus theorem to all compact, orientable,  3-manifolds with toral or empty boundary \cite{xu}.
\smallskip

The following discussion of Seifert fibre spaces is needed for Theorem \ref{t-intro:BRS}.

\subsection{Seifert Fibre Spaces}  
A  {\em Seifert fibre space} is a 3-manifold that is foliated by circles.  The circles of the foliation are divided into  regular fibres and  exceptional fibres: a circle is a regular fibre if it has a neighbourhood that admits a fibre-preserving homeomorphism to the solid torus $\mathbb{D}^2\times S^1$ foliated 
by the circles $\{x\}\times S^1$; in a neighbourhood of an exceptional fibre,  the foliation is modelled on the foliation
of  $\mathbb{D}^2\times S^1$ that is obtained by cutting the product fibration open along a disc $\mathbb{D}^2\times \{*\}$ and then regluing after twisting through an angle $2q\pi/p$,  where $p,q\in \Z$ are coprime integers with  $0 < q < p$. 
By collapsing each circle fibre in $M$ to a point,  
we obtain a $2$-manifold with an orbifold structure given by marking each exceptional
fibre with a cyclic group recording the order $p$ of the twisting there; this is called the base orbifold of the Seifert fibration.

We will be concerned only with Seifert fibred manifolds  where $\pi_1M$ is infinite,  in which
case $\pi_1(M)$ fits into a central extension 
$$ 1 \to \Z \to \G \to \Delta \to 1,$$
where the central $\Z$ is the fundamental group of a regular fibre and $\Delta$ is the fundamental group of 
the base orbifold.  For example,   if $M$ has 3 exceptional fibres,  with twisting data $(p_1,q_1),\, (p_2,q_2),\,
(p_3,q_3)$ and the underlying surface of the base orbifold is a sphere,  then $\Delta = \Delta(p_1,p_2,p_3)$ and
$$
\pi_1(M) = \< c_1, c_2,c_3, z\mid z~\hbox{is central},~c_1^{p_1}z^{-q_1}=c_2^{p_2}z^{-q_2}=c_3^{p_3}z^{-q_3}=1, c_1c_2c_3 = z^d\>
$$
for some $d\in\Z$. 

The examples of Hempel  (\ref{ss:sobering}(5)) are surfaces bundles over the circle with finite holonomy.
Wilkes \cite{wilkes:sfs} proved that all Seifert manifolds apart from these are rigid.

\section{Absolute Profinite Rigidity}

We seek finitely generated,   residually finite groups $\G$ that are profinitely rigid in the absolute sense.   
We observed in (\ref{ss:sobering}(3)) that if $\G$ satisfies a group law, then this immediately reduces to a 
problem within the class of groups that satisfy this law -- i.e. ~it reduces to a challenge concerning relative profinite
rigidity.   But we also  alluded (Remark \ref{lemon-squeezy})  to the difficulty of  knowing where to begin if 
there is no obvious reduction to a relative problem.  That is the fundamental challenge that we want to address 
in this section.  To avoid simple reductions, we consider only groups that are {\em{full-sized}}, meaning that 
they contain a non-abelian free group. {\em{Do there exist finitely generated, residually finite, full-sized groups that are
profinitely rigid?}}  
Might 3-manifolds and  hyperbolic geometry\footnote{Always remember,   ${\mathbb{H}}$yperbolic geometry  is good for you!}
help us?

\subsection{Profinitely Rigid Kleinian Groups}

 We begin by describing some hyperbolic 3-orbifolds of small volume.  
The reader will likely be familiar with the tiling of the hyperbolic plane by ideal triangles. In the same way, one can
tile hyperbolic 3-space $H^3$ with copies of the regular ideal tetrahedron, i.e.~the tetrahedron
with vertices at infinity where the dihedral angle between each pair of faces is $\pi/3$.  Barycentric subdivision
divides each tetrahedron in the tiling  into 24 congruent tetrahedra, each with one vertex at infinity. The reflections in 
the faces of any one of these smaller tetrahedra will generate the full isometry group of the tiling. This group of
isometries is
$$\Lambda_0=\< A,   B,   C,   D \mid A^2,   B^2,   C^2,   D^2,   (AB)^3,   (AC)^3,   (AD)^2,   (BC)^3,   (BD)^2,   (CD)^6 \>.$$ 
This  is the unique lattice of smallest covolume in ${\rm{Isom}}(\H^3)$.
It contains the Bianchi group ${\rm{PSL}}(2,  \Z[\omega])$  as a subgroup of index 4,  
where $\omega^2+\omega+1=0$.  

Among  torsion-free uniform  lattices in  ${\rm{PSL}}(2,\C)$ there is a unique one of smallest covolume, 
 namely $\G_W$, the fundamental group of the Weeks manifold \cite{weeks}, which can be obtained by Dehn surgery on the Whitehead link.

\begin{theorem}[Bridson  McReynolds,  Reid,  Spitler \cite{BMRS1}]\label{thm:bmrs}
Each of   $\G_W,  \ \Lambda_0,  \ {\rm{PSL}}(2,  \Z[\omega]), $ and $ {\rm{PGL}}(2,  \Z[\omega])$
is { {profinitely rigid}} among all finitely generated,   residually finite groups.   
\end{theorem}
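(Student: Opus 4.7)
The plan is to follow the strategy pioneered in \cite{BMRS1}: starting from an arbitrary finitely generated, residually finite $\Lambda$ with $\widehat{\Lambda}\cong\widehat{\Gamma}$, one manufactures a faithful representation of $\Lambda$ into ${\rm{PSL}}(2,\mathbb{C})$, shows its image is a lattice commensurable with $\Gamma$, and then invokes the Hopf property to conclude $\Lambda\cong\Gamma$. This addresses exactly the difficulty highlighted in Remark \ref{lemon-squeezy}: we know nothing about $\Lambda$ except the lattice of its finite quotients, so the entire challenge is to extract geometric and representation-theoretic content from that data alone.

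First I would harvest the profinite invariants of $\Gamma$ and transfer them to $\Lambda$. Using Remark \ref{blob}, the index-preserving bijection between finite-index subgroups of $\Gamma$ and $\widehat{\Gamma}$ gives $\Lambda$ the same abelianisations of every finite-index subgroup, the same first $\ell^2$-Betti number (which vanishes), the same cohomology with finite coefficients (since arithmetic Kleinian groups are good in Serre's sense, as used by $3$-manifold specialists \cite{reid:stA}), and, crucially, the same family of finite quotients of the form ${\rm{PSL}}(2,\mathbb{F}_q)$ coming from reducing $\mathbb{Z}[\omega]$ modulo primes. These constrain $\Lambda$ to have at least the correct rough shape: infinite, with abelianisations matching those of the relevant arithmetic lattice.

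Next I would construct a faithful representation $\rho:\Lambda\to{\rm{PSL}}(2,\mathbb{C})$. The approach is to pick a compatible family of mod-$\mathfrak{p}$ representations $\Lambda\to{\rm{PSL}}(2,\mathbb{F}_q)$ supplied by the transferred finite quotients, then lift them to characteristic zero using Mazur-style deformation theory of the resulting mod-$\mathfrak{p}$ representation. The obstruction to unobstructed lifting sits in $H^2(\Lambda,\operatorname{ad}\rho)$ with finite coefficients; goodness of $\Gamma$ forces this group to agree with the cohomology of $\widehat{\Gamma}$, which one can compute using that $\Gamma$ is a lattice whose adjoint Galois representation is rigid. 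Once characteristic-zero representations of every finite-index subgroup of $\Lambda$ are assembled compatibly, they glue to a representation $\rho:\Lambda\to{\rm{PSL}}(2,\mathbb{C})$ whose trace field is forced to be $\mathbb{Q}(\omega)$ (or the invariant trace field of $\Gamma_W$ in that case). Faithfulness is then argued by showing that a non-trivial kernel would produce a finite quotient of $\Lambda$ not matching any finite quotient of $\Gamma$.

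Finally, I would use the arithmeticity criterion of Margulis-Borel plus the matching invariant trace field to identify $\rho(\Lambda)$ as commensurable with $\Gamma$; Mostow rigidity conjugates $\rho(\Lambda)$ inside the commensurator of $\Gamma$, and the fact that $\widehat{\rho(\Lambda)}\cong\widehat{\Gamma}$ pins down the image as $\Gamma$ itself (the small-covolume hypothesis is important for $\Gamma_W$ and $\Lambda_0$, which are minimal in their commensurability classes). The resulting surjection $\rho:\Lambda\twoheadrightarrow\Gamma$ induces $\hat\rho:\widehat{\Lambda}\to\widehat{\Gamma}$, and the Hopf property (Lemma \ref{l:hopf}), applied to the endomorphism of $\widehat{\Gamma}\cong\widehat{\Lambda}$, upgrades $\rho$ to an isomorphism. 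The hard step — and the one absorbing essentially all the technical weight of the argument — is the representation lift in the middle paragraph: producing a faithful characteristic-zero representation of an entirely abstract group $\Lambda$ requires Galois-cohomological rigidity inputs specific to each of the four arithmetic lattices in the statement, and it is here that the theory of good groups, deformation theory of two-dimensional representations, and the arithmetic structure of $\mathbb{Z}[\omega]$ must be combined with care.
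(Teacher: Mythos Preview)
Your overall architecture --- manufacture a Zariski-dense representation $\rho:\Lambda\to{\rm PSL}(2,\mathbb{C})$ with image inside $\Gamma$, argue that the image is all of $\Gamma$, then invoke the profinite Hopf property --- matches the paper. But the mechanism you propose for the first step is not the one used, and as stated it has a gap.

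The paper does \emph{not} build representations of $\Lambda$ by Mazur-style deformation from mod-$\mathfrak{p}$ quotients. The argument runs the other way: one starts with the Zariski-dense representations of $\Gamma$ itself, uses integrality of traces to see that each transported $p$-adic representation $\Gamma\to{\rm SL}(2,\overline{\mathbb{Q}_p})$ is bounded and hence extends continuously to $\widehat{\Gamma}\cong\widehat{\Lambda}$, and then \emph{restricts} to $\Lambda$. The organising concept --- which you omit entirely --- is \emph{Galois rigidity} (Definition~\ref{def:galois-rigid}): each of these lattices has exactly $[K_\Gamma:\mathbb{Q}]$ Zariski-dense characters, all Galois conjugate to the inclusion; one proves that $\Lambda$ inherits the same Galois rigidity with matching trace field and quaternion algebra, and it is this arithmetic matching (not a Margulis--Borel criterion) that forces $\rho(\Lambda)\subset\Gamma$. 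Your deformation proposal also has a concrete problem: the lifting obstruction lives in $H^2(\Lambda,{\rm ad}\,\rho)$, but goodness of $\Gamma$ only identifies $H^*(\widehat{\Gamma},A)\cong H^*(\Gamma,A)$; there is no reason the unknown group $\Lambda$ should itself be good, so you cannot transfer the vanishing you need to $H^2(\Lambda,-)$. Finally, faithfulness of $\rho$ is not argued separately in the paper as you suggest --- it falls out only at the very end, from the Hopf property applied to $\widehat{\rho}$.
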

The proofs in \cite{BMRS1} cover a number of related lattices, but only finitely many. The list of lattices in ${\rm{PSL}}(2,\C)$
that are known to be profinitely rigid (in the absolute sense) was extended in \cite{prasad} and \cite{CW} but remains finite
for the moment.

The following are some of the key elements of the proof of Theorem \ref{thm:bmrs} for the given examples $\G$.
\begin{itemize}[leftmargin=*]
\item Arithmetic: The trace field $K_\G$ of $\G$ (defined below) has small degree and $\G$ is commensurable with the group
of units of a maximal order in the associated quaternion algebra.  
\item Character varieties: $\G$ has very few Zariski-dense representations into ${\rm{PSL}}(2,\C)$, up to conjugacy,  and
these are all accounted for by the arithmetic of $K_\G$ ({\em{``Galois rigidity"}}).
\item Mostow rigidity and volume calculations are useful,
 as are various elements of 3-manifold topology, including the existence of
compact cores,  some knot theory, and the existence of finite covers that fibre.
The consequences of cubulation,  following Agol \cite{agol} and Wise \cite{wise:qc},  play a crucial role,
not least in ensuring  cohomological goodness in the sense of Serre \cite{serre:CG}.
\item Intricate arguments concerning  the subgroups of small finite index in each example were needed  in \cite{BMRS1} but 
these were superceded by the results in \cite{prasad}.
\end{itemize}

\subsection{Trace fields and Galois rigidity} \label{s:Galois}

A remarkable feature of hyperbolic 3-manifolds is that their fundamental groups
have  {\em{arithmetic structure}} intrinsically associated to them -- see Maclachlan and Reid \cite{MR}.
Much of the proof of  Theorem \ref{thm:bmrs} revolves around this arithmetic structure and the way it controls  representations
into ${\rm{PSL}}(2,\C)$ in our examples.  A key notion in this regard is {\em Galois rigidity}.   I will explain very briefly what this is and how it is used.

Let $\phi\colon \mathrm{SL}(2,\mathbb{C}) \to \mathrm{PSL}(2,\mathbb{C})$ be the quotient homomorphism, and if $H$ is a finitely generated subgroup of $\PSL(2,\mathbb{C})$,  set $H_1 = \phi^{-1}(H)$. It will be convenient to say that $H$ is 
{\em Zariski-dense} in $\PSL(2,\C)$ when what we actually mean is that $H_1$ is a Zariski-dense subgroup of $\SL(2,\C)$. The \textit{trace-field} of $H$ is defined to be the field 
\[ K_H=\mathbb{Q}(\mathrm{tr}(\gamma)~\colon~ \gamma \in H_1). \] 
If $K_H$ is a number field with ring of integers $R_{K_H}$, we say that $H$ has {\em integral traces} if $\tr(\gamma)\in R_{K_H}$ for all $\gamma \in H_1$.  

An important object associated to $H$ is the subalgebra of ${\rm{Mat}}(2,\C)$ generated by $H_1$; this is 
a {\em quaternion algebra} over $K_H$.

Suppose now that $\La$ is an abstract finitely generated group and  $\rho\colon \La\rightarrow \PSL(2,\C)$ a Zariski-dense representation with $K=K_{\rho(\La)}$ a number field of degree $n_K$. If $K=\Q(\theta)$ for some algebraic number $\theta$, then the Galois conjugates of $\theta$, say $\theta=\theta_1,\dots,\theta_{n_K}$, provide embeddings $\sigma_i\colon K\to\C$ defined by $\theta\mapsto\theta_i$.  These in turn can be used to build $n_K$ Zariski-dense non-conjugate representations $\rho_{\sigma_i}\colon \La \to \PSL(2,\C)$ with the property that $\tr(\rho_{\sigma_i}(\gamma))=\sigma_i(\tr\rho(\gamma))$ for all $\gamma\in \La$. 
One refers to these as {\em Galois conjugate representations}. 
The existence of these Galois conjugates shows that $|\mathrm{X}_{\mathrm{zar}}(\G,\mathbb{C})|\geq  n_{K_{\rho(\La)}}$, where 
$\mathrm{X}_{\mathrm{zar}}(\La,\C)$ denotes the set of Zariski-dense representations $\G\to\PSL(2,\C)$ up to conjugacy.

\begin{definition}\label{def:galois-rigid}
Let $\La$ be a finitely generated group and $\rho\colon \La\to \PSL(2,\C)$ a Zariski-dense representation whose trace field $K_{\rho(\La)}$ is a number field. If
 $|\mathrm{X}_{\mathrm{zar}}(\La,\mathbb{C})|= n_{K_{\rho(\G)}}$, we say that $\G$ is {\em Galois rigid}  (with associated field $K_{\rho(\La)}$).
\end{definition}  
 
\subsection{An outline of the proof of Theorem  \ref{thm:bmrs}}

An explicit understanding of the   character variety of each of our examples enables us to prove
that each is Galois rigid. For example, we prove that the only Zariski dense representations of ${\rm{PSL}}(2,\Z[\omega])$,
whose trace field is $\Q(\sqrt{-3})$, are the inclusion map and its complex conjugate. In each case, one also has integrality of traces. The integrality of traces is used to ensure the boundedness of the non-archimedean representations  
$\G\to {\rm{SL}}(2,\-{\Q_p})$ obtained by  transporting Zariski dense representations 
$\G\to {\rm{SL}}(2,\C)$  via (non-continuous) field isomorphisms $\C\cong\-{\Q_p}$, for all primes $p$. This boundedness enables one to extend the representations to $\wh{\G}$,  then restrict to any finitely generated $\Lambda$ with
$\wh{\Lambda}\cong\wh{\G}$ to obtain bijections between the sets of bounded, Zariski-dense  characters of $\G$ 
and $\Lambda$ at every finite place (Lemma 4.6 of \cite{BMRS1}). 

With this control established, one can argue
that the abstract group $\Lambda$ is Galois rigid with a Zariski-dense representation $\rho\colon \La\to \PSL(2,\C)$
whose arithmetic data match those  of $\G$ -- see \cite[Theorem 4.8]{BMRS1} for a precise statement. In good situations
(which include the low-degree number fields of our examples),  this matching of data forces the number fields
$K_{\rho(\Lambda)}$ and $K_\G$ to be equal,  and likewise the quaternion algebras of $\G$ and $\rho(\Lambda)$.
 This in turn allows one to
conclude that (up to Galois conjugation) the image of $\rho:\La\to  \PSL(2,\C)$ is contained in $\G$ or a small
extension of it, and in all of the examples one can force the image to lie in $\G$. 

At this point of the argument in 
\cite{BMRS1}, a host of results about subgroups of 3-manifolds in general, and the finite-index subgroups of our
examples in particular, were used to argue that if the image of $\rho$ was not equal to $\G$ then $\La$ would have
a finite quotient that $\G$ did not have. These arguments can now be replaced by \cite[Theorem A]{prasad}: {\em{
if $\Pi<{\rm{PSL}}(2,\C)$ is a lattice and $H<\Pi$ is a finitely generated, proper subgroup, then 
$\wh{H}\not\cong\wh{\Pi}$; if $H$ has finite index in $\Pi$, then there is a finite group onto which $H$ maps but $\Pi$ does not. }} 

At this stage, we have proved that there is a surjection $\rho:\La\twoheadrightarrow\G$, hence $\wh{\rho}:\wh{\La}
\twoheadrightarrow\wh{\G}$, and the Hopf property for finitely generated profinite groups (a variant of
Lemma \ref{l:hopf}) enables us to conclude that $\wh{\rho}$ is injective and hence so is $\rho$.  \qed

\subsection{Profinitely rigid Fuchsian groups} 
Given integers $(p,q,r)$ with $1/p + 1/q + 1/r < 1$, one can tile  the hyperbolic plane with triangles whose interior
angles are $2\pi/p,\,  2\pi/q,\,   2\pi/r$.  The group  generated by reflections is the sides of a fixed triangle acts
transitively on the set  of triangles in the tiling; we denote this group $\Delta^{\pm}(p,q,r)$. The index-2
subgroup consisting of orientation-preserving isometries is $\Delta(p,q,r)$.  We restrict our attention to 
the following triples $(p,q,r)$: 
\begin{equation}\label{list-top}
(3,3,4), \ \ (3,3,5),\ \ (3,3,6),\ \ (2,5,5), \ \ (4,4,4).   
\end{equation} 
\begin{equation}\label{list-bottom}
\ \ (2,3,8), \ \ (2,3,10),\ \ (2,3,12),\ \ 2,4,5), \ \ (2,4,8).
\end{equation}

\begin{theorem} [Bridson,  McReynolds,  Reid,  Spitler \cite{BMRS2}] 
Both $\Delta(p,q,r)$ and  $\Delta^{\pm}(p,q,r)$ are profinitely rigid if $(p,q,r)$ belongs to one of these lists
\end{theorem}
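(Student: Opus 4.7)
The plan is to adapt the template of Theorem \ref{thm:bmrs} from the Kleinian setting to the Fuchsian setting, exploiting the fact that each $\Delta(p,q,r)$ in the list is a cocompact arithmetic lattice in $\PSL(2,\R)$ whose trace field is an explicit real number field of small degree. Concretely, since $\Delta(p,q,r)$ is generated by elliptic elements of orders $p,q,r$ whose product is trivial, the traces of these generators in any Zariski-dense representation into $\PSL(2,\C)$ must be $\pm 2\cos(\pi k/p)$, $\pm 2\cos(\pi\ell/q)$, $\pm 2\cos(\pi m/r)$ for primitive choices of roots of unity; the defining relation and rigidity of triangles in $\PSL(2,\C)$ then pin down the remaining trace up to finitely many Galois conjugate possibilities. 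This gives Galois rigidity in the sense of Definition \ref{def:galois-rigid}, with associated field $K_{\Delta(p,q,r)}$ a known cyclotomic subfield. One also verifies integrality of traces in each case, which is automatic from the arithmeticity of the triples on the list.

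Given $\Lambda$ finitely generated, residually finite with $\wh{\Lambda}\cong\wh{\Delta(p,q,r)}=:\wh{\G}$, the first task is to build Zariski-dense representations of $\Lambda$ matching those of $\G$. Using that Fuchsian groups (being virtually surface groups) are cohomologically good and using integrality of traces to obtain boundedness of representations at every finite place, one can transport the Zariski-dense representations of $\G$ through $\wh{\G}\cong\wh{\Lambda}$ exactly as in \cite[Lemma 4.6 and Theorem 4.8]{BMRS1}, obtaining a Zariski-dense $\rho\colon \Lambda\to\PSL(2,\C)$ with the same trace field and quaternion algebra as $\G$, and Galois rigid. Since the trace fields in the five cases of (\ref{list-top}) are totally real of degree at most $2$, the quaternion algebras ramify in a way that forces $\rho(\Lambda)\subset\PSL(2,\R)$ after a Galois adjustment, and the arithmetic data then place $\rho(\Lambda)$ inside the maximal lattice commensurable with $\G$. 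A Fuchsian analogue of \cite[Theorem A]{prasad} -- ruling out proper finitely generated subgroups $H<\Pi$ of a $\PSL(2,\R)$-lattice with $\wh{H}\cong\wh{\Pi}$ -- then forces $\rho(\Lambda)=\G$, giving a surjection $\rho\colon\Lambda\twoheadrightarrow\G$; the induced $\wh{\rho}\colon\wh{\Lambda}\twoheadrightarrow\wh{\G}$ must be the given isomorphism, so by Hopficity of finitely generated profinite groups (as in Lemma \ref{l:hopf}) $\rho$ is an isomorphism. The two lists are paired by commensurability: each triple in (\ref{list-top}) is contained as an index-$2$ subgroup of the extended group of some triple in (\ref{list-bottom}) and vice versa, so establishing rigidity for one member of a commensurable pair (together with control of the subgroup lattice provided by Remark \ref{blob}) propagates rigidity to the other, and likewise the orientation-preserving result implies the extended-group result $\Delta^{\pm}(p,q,r)$ after treating the $\Z/2$ extension by a parallel argument with $\PGL(2,\R)$ in place of $\PSL(2,\R)$.

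The principal obstacle I expect is the step of showing that $\rho(\Lambda)$ is not merely contained in $\G$ (up to a small extension) but equal to it. In the Kleinian case this is now handled cleanly by \cite[Theorem A]{prasad}, whose proof uses $\ell^2$-Betti numbers and the cubulation technology behind Agol's theorem; in the Fuchsian setting one needs a corresponding statement for lattices in $\PSL(2,\R)$, which is plausible (surface and orbifold subgroups of Fuchsian lattices are well understood, goodness holds, and the first $\ell^2$-Betti number detects Euler characteristic) but requires a dedicated argument for non-finite-index subgroups, together with a careful finite-quotient analysis for finite-index subgroups to distinguish $\rho(\Lambda)$ from its proper overgroups in the commensurator. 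A second, more technical obstacle is the restricted list of triples: only those $(p,q,r)$ for which the trace field has small degree and the relevant character variety contains no unexpected Zariski-dense components are within reach of the current method, which is why the theorem is phrased for the ten specific triples in (\ref{list-top})--(\ref{list-bottom}) rather than for all hyperbolic triangle groups.
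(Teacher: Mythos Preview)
Your proposal is essentially correct and follows the same template as the paper: the authors simply record that each of these triangle groups is Galois rigid (as a subgroup of $\PSL(2,\C)$), that in every case the trace field is a real quadratic number field whose prime ramification is tame enough for the \cite{BMRS1} machinery to run, and that therefore the argument sketched for Theorem~\ref{thm:bmrs} applies verbatim.

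One caution: your final paragraph suggests that rigidity for one member of a commensurable pair ``propagates'' to the other via the subgroup-lattice correspondence of Remark~\ref{blob}. This is not valid as stated---absolute profinite rigidity is not a commensurability invariant (the examples in \S\ref{ss:sobering}(2)--(4) already exhibit commensurable non-rigid groups), and Remark~\ref{blob} only matches finite-index subgroups of $\Lambda$ and $\G$ up to profinite completion, not up to isomorphism. The paper does not attempt any such shortcut: it asserts that the full Galois-rigidity argument applies directly to each of the twenty groups $\Delta(p,q,r)$ and $\Delta^{\pm}(p,q,r)$ on the two lists. So you should drop the propagation step and instead observe that the same direct analysis (computing the trace field, verifying Galois rigidity, and running the \cite{BMRS1} template) works uniformly for every entry.
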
 
A key fact about these groups is that they are Galois rigid (as subgroups of ${\rm{PSL}}(2,\C)$) and in each case
the trace field is a real quadratic number field where the ramification of primes is simple enough to ensure that
the argument from \cite{BMRS1} sketched above applies.    

\bigskip

{ {\em We have discovered some lattices in ${\rm{PSL}}(2,\R)$
and  ${\rm{PSL}}(2,\C)$  that are profinitely rigid in the absolute sense, but conjecturally there should be many more.}}
 
 \section{Seifert Fibre Spaces and the importance of finiteness properties}

The following theorem provides the first examples of finitely presented groups that are profinitely rigid among
finitely presented groups but not among finitely generated groups.   Like our
earlier examples,  the groups $\G$ in this theorem are 3-manifold groups with particular arithmetic properties. 
Here, $S^2(p,q,r)$ denotes the quotient $\mathbb{H}^2/\Delta(p,q,r)$ of the hyperbolic plane by the 
 triangle group $\D(p,q,r)$.
 
\begin{theorem}[Bridson,  Reid,  Spitler \cite{BRS}] \label{t:duke} 
There exist finitely presented, residually finite groups $\G$ with the following properties:
\begin{enumerate}[leftmargin=*]
\item $\G\times\G$ is profinitely rigid among all finitely presented, residually finite groups. 
\item There exist infinitely many non-isomorphic finitely generated groups $\Lambda$ such that  $\wh{\Lambda}\cong \wh{\G}\times\wh{\G}$.
\item If $\Lambda$ is as in (2), then there is an embedding
$\Lambda\hookrightarrow\G\times\G$ that induces the isomorphism $\wh{\Lambda}\cong \wh{\G\times\G}$
(in other words,  $\Lambda\hookrightarrow\G\times\G$ is a Grothendieck pair). 
\end{enumerate}  
If $M$ is any Seifert fibred space 
with base  orbifold $S^2(3,3,4)$ or $S^2(3,3,6)$ or $S^2(2,5,5)$,  then $\G=\pi_1M$ has these properties. 
\end{theorem}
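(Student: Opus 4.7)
The plan is to combine three ingredients assembled earlier in the paper: the absolute profinite rigidity of the triangle groups $\Delta = \Delta(p,q,r)$ for $(p,q,r)\in\{(3,3,4),(3,3,6),(2,5,5)\}$ from the previous theorem of Bridson--McReynolds--Reid--Spitler, the Platonov--Tavgen fibre product construction (Proposition~\ref{p:PT}), and the Wilton--Zalesskii and Wilkes results which show that $\wh{\pi_1 M}$ encodes the Seifert data of $M$. Write $\G = \pi_1 M$ as the central extension
\[
1 \to \Z \to \G \to \Delta \to 1
\]
with $\Z$ the regular fibre subgroup; goodness of 3-manifold groups gives the parallel central extension
\[
1 \to \wh{\Z} \to \wh{\G} \to \wh{\Delta} \to 1.
\]

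For (1), suppose $H$ is finitely presented and residually finite with $\wh{H}\cong\wh{\G}\times\wh{\G}$. First I would identify the centre: $\Delta$ is a cocompact Fuchsian group, hence centre-free, and using its absolute profinite rigidity one shows $Z(\wh{\G}) = \wh{\Z}$, so $Z(\wh{H})=\wh{\Z}^{2}$. The two projections $\wh{H}\to \wh{H}/Z(\wh{H})\cong\wh{\Delta}\times\wh{\Delta}\to\wh{\Delta}$, together with the absolute profinite rigidity of $\Delta$ and the Hopf property for finitely generated profinite groups, promote to a surjection $H \twoheadrightarrow \Delta\times\Delta$ on the discrete level. Its kernel $N$ is finitely generated (since $H$ is finitely presented and $\Delta\times\Delta$ is finitely presented), residually finite, and has profinite completion $\wh{\Z}^{2}$, so $N\cong\Z^{2}$ and is central in $H$. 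Thus $H$ is a central extension of $\Delta\times\Delta$ by $\Z^{2}$, classified by a class $c_H\in H^{2}(\Delta\times\Delta;\Z^{2})$; comparing $c_H$ with the class of $\G\times\G$ via Wilkes' profinite classification of Seifert fibrations on each factor forces $H\cong\G\times\G$.

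For (3), and hence (2), I would apply Proposition~\ref{p:PT} to a family of epimorphisms $f_n\colon \G\twoheadrightarrow Q_n$ built as follows: starting from the surjection $\G\twoheadrightarrow\Delta$, produce infinitely many pairwise non-isomorphic finitely presented quotients $Q_n$ of $\Delta(p,q,r)$ that are \emph{superperfect}, meaning $\wh{Q_n}=1$ and $H_{2}(Q_n,\Z)=0$. Such quotients can be manufactured by adapting Higman-type or small-cancellation constructions, tangling relations into a finite-index surface subgroup of $\Delta$ so as to annihilate all finite quotients while keeping the Schur multiplier trivial. Proposition~\ref{p:PT} then says that the fibre product
\[
\Lambda_n = \{(g,h)\in\G\times\G : f_n(g)=f_n(h)\}
\]
is finitely generated (by the 0-1-2 Lemma) with $\wh{\Lambda_n}\cong\wh{\G\times\G}$, so $\Lambda_n\hookrightarrow\G\times\G$ is a Grothendieck pair, establishing (3). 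Distinct $Q_n$ force the $\Lambda_n$ to be pairwise non-isomorphic, since $Q_n$ is recovered as the quotient $(\G\times\G)/\Lambda_n\cdot(\G\times\{1\})$, which delivers (2).

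The principal obstacle is the final step of (1): showing that the extension class $c_H$ matches that of $\G\times\G$. The class lives in $H^{2}(\Delta\times\Delta;\Z^{2})$, while the profinite input only controls its image in the continuous cohomology $H^{2}_c(\wh{\Delta}^{2};\wh{\Z}^{2})$; the task is to rule out inequivalent central extensions that become equivalent after profinite completion. Goodness of $\Delta$ in the sense of Serre makes this comparison possible in principle, but ruling out exotic ambiguities for the specific base orbifolds $S^{2}(3,3,4)$, $S^{2}(3,3,6)$, $S^{2}(2,5,5)$ is exactly what Wilkes' Seifert profinite rigidity theorem is there to provide. A secondary obstacle is the combinatorial construction of the superperfect quotients $Q_n$; this is the substantive input that allows the Platonov--Tavgen machinery to produce infinitely many non-isomorphic Grothendieck pairs rather than just one, and it is where the most delicate work lies.
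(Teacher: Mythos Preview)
Your proposal misreads statement (3) and, because of that, misses the architecture of the proof. Part (3) is a \emph{universal} claim: \emph{every} finitely generated, residually finite $\Lambda$ with $\wh{\Lambda}\cong\wh{\G\times\G}$ embeds in $\G\times\G$ inducing the isomorphism (this is Theorem~\ref{t3} in the paper). You treat (3) as a tautology about the particular fibre products you build, but in the paper (3) is proved first, via an extension of the Galois-rigidity machinery of \cite{BMRS1,BMRS2} to central extensions, direct products and fibre products, and then (1) follows from (3) together with Theorem~\ref{t:not-fp}: there are no proper \emph{finitely presented} Grothendieck pairs inside $\pi_1M\times\pi_1M$ (an extension of the Bridson--Wilton result Theorem~\ref{t:BWclosed}). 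Your direct route to (1) tries to bypass this and has two genuine errors. First, absolute profinite rigidity of $\Delta$ does not let you ``promote'' a surjection $\wh{H}\twoheadrightarrow\wh{\Delta}$ to a discrete surjection $H\twoheadrightarrow\Delta$; that implication is exactly the hard step the Galois-rigidity argument is there to supply. Second, the claim that the kernel of a map between finitely presented groups is finitely generated is false (consider $F_2\twoheadrightarrow\Z$), so your identification of $N$ with $\Z^2$ collapses.

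For (2), the paper does not use Platonov--Tavgen applied to superperfect quotients of $\G$ (or of $\Delta$). Instead, constraints on the Seifert invariants are used to realise $\G$ as a finite-index subgroup of $[G,G]$ for a group $G$ that maps onto a non-elementary hyperbolic group and has $H_1(G,\Z)$ finite and $H_2(G,\Z)=0$; a homological argument adapted from Bass--Lubotzky \cite{BL} then produces the infinitely many Grothendieck pairs (Theorem~\ref{t:not-GR}). Your proposed construction of infinitely many superperfect quotients $Q_n$ of the triangle group is speculative and unsupported; in any case it is not what is done in \cite{BRS}.
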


\subsection{An outline of the Proof}
We consider Seifert fibre spaces $M$
whose base orbifold is $S^2(p,q,r) = \H^2/\D(p,q,r)$, where $(p,q,r)$ is one of triples from   list (\ref{list-top}) or
(\ref{list-bottom}).  We know from the previous section that $\Delta=\Delta(p,q,r)$ is profinitely rigidity.  There is a central extension
$$
1\to\Z\to \pi_1M \to \Delta\to 1.
$$
\begin{theorem}\label{SFS_rigid} 
Let $\Delta(p,q,r)$  be a triangle group from  list (\ref{list-top}) or (\ref{list-bottom}), and let $M$ be a Seifert fibred space with base $S^2(p,q,r)$.  
Then $\pi_1 M$ is profinitely rigid (in the absolute sense).
\end{theorem}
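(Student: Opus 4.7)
The plan is to exploit the central extension
\[1\to\Z\to\G:=\pi_1M\to\Delta\to 1\]
together with the absolute profinite rigidity of $\Delta=\Delta(p,q,r)$ established in the preceding theorem. Suppose $\Lambda$ is a finitely generated, residually finite group with $\wh{\Lambda}\cong\wh{\G}$ via an isomorphism $\phi$. The strategy proceeds in three stages: recognise the fibre subgroup inside $\Lambda$, identify the quotient with $\Delta$, and match the central extension class.

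First I would argue that $Z(\wh{\G})=\wh{\Z}$. The inclusion $\wh{\Z}\subseteq Z(\wh{\G})$ is immediate since $\Z$ is central in $\G$ and $\wh{\Z}$ is the closure of its image in $\wh{\G}$. For the reverse inclusion, one shows $Z(\wh{\Delta})=1$, which should follow from the abundance of non-central finite quotients of $\Delta$ revealed by the arithmetic analysis underpinning the profinite rigidity of $\Delta$. Transporting via $\phi$ then yields $Z(\wh{\Lambda})=\wh{\Z}$. Setting $Z:=\Lambda\cap\wh{\Z}$, this is a central subgroup of $\Lambda$; every homomorphism from $\Lambda$ to a finite group that kills $Z$ extends uniquely to $\wh{\Lambda}$ and factors through $\wh{\Lambda}/\wh{\Z}\cong\wh{\Delta}$, so $\wh{\Lambda/Z}\cong\wh{\Delta}$. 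The absolute profinite rigidity of $\Delta$ then gives $\Lambda/Z\cong\Delta$. A Reidemeister--Schreier argument using the finite presentability of $\Delta$ and finite generation of $\Lambda$ shows that $Z$ is finitely generated, and the five lemma applied to the morphism of central extensions yields $\wh{Z}\cong\wh{\Z}$, whence $Z\cong\Z$.

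At this point we have two central extensions of $\Delta$ by $\Z$, one from $\Lambda$ and one from $\G$, whose profinite completions agree up to an outer automorphism of $\wh{\Delta}$ induced by $\phi$. Goodness of $\Delta$ in the sense of Serre -- which holds because $\Delta$ is virtually a surface group -- gives natural isomorphisms $H^2(\Delta,\Z/n)\cong H^2_{\mathrm{cts}}(\wh{\Delta},\Z/n)$ for every $n$, so the mod-$n$ reductions of the two extension classes in $H^2(\Delta,\Z)$ agree up to the action of the profinitely realised outer automorphism. Combining the Bockstein sequence for $0\to\Z\to\Z\to\Z/n\to 0$ with the fact that $H^2(\Delta,\Z)$ is finitely generated (and essentially pinned down by the orbifold Euler class) then pins the two classes down in $H^2(\Delta,\Z)$ up to the action of $\mathrm{Out}(\Delta)\times\mathrm{Aut}(\Z)$. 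Using the explicit structure of Seifert fibrations over $S^2(p,q,r)$, one verifies that the remaining ambiguity is realised by an abstract isomorphism between the two extensions, yielding $\Lambda\cong\G$.

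The main obstacle will be this final step: controlling the action of the outer automorphisms profinitely detected by $\phi$ on $H^2_{\mathrm{cts}}(\wh{\Delta},\wh{\Z})$ precisely enough to conclude that every such ambiguity descends to an abstract automorphism of $\Delta$ lifting to an isomorphism of the two central extensions. The torsion contributed by the cone points makes the cohomology of $\Delta$ less transparent than in the closed-surface case and $H^3(\Delta,\Z)$ is non-trivial, so some care is required to transfer profinite matching to matching over $\Z$. A secondary technical point is verifying that $Z(\wh{\Delta})$ is genuinely trivial, for which one must extract enough information from the specific finite quotients of $\Delta(p,q,r)$ appearing in the arithmetic analysis of the previous theorem to rule out hidden central elements in the completion.
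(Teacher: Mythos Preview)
Your overall strategy — recognise the centre, recognise the quotient, then match the extension — is sensible, but it diverges from the paper's route at both ends, and the divergence at the final step is where a real difficulty lies.

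For the first two stages, the paper does not work via $Z(\wh{\G})$ at all.  Instead it \emph{extends the Galois-rigidity machinery} of the preceding sections to the central extension $\G$: the Zariski-dense ${\rm PSL}(2,\C)$–representations of $\G$ all factor through $\Delta$, so the Galois rigidity of $\Delta$ already controls those of $\G$ and, profinitely, those of $\Lambda$.  This yields a surjection $\Lambda\twoheadrightarrow\Delta$ directly, without needing to verify $Z(\wh{\Delta})=1$.  (Your route to $\Lambda/Z\cong\Delta$ is plausible, but note that the ``five lemma'' step for $\wh{Z}\cong\wh{\Z}$ is not literally a five-lemma argument: profinite completion is not exact, and you need the goodness of $\Delta$ — which you only invoke later — to obtain the exact sequence $1\to\wh{Z}\to\wh{\Lambda}\to\wh{\Delta}\to 1$ in the first place.)

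The substantive difference is the final step.  Once one knows that $\Lambda$ is a central extension of $\Delta$ by $\Z$, it is the fundamental group of \emph{some} Seifert fibre space with base $S^2(p,q,r)$.  At this point the paper invokes Wilkes' theorem on the profinite rigidity of Seifert fibre spaces \emph{within the class of $3$--manifold groups}: over the bases $S^2(p,q,r)$ in question there are no Hempel-type ambiguities, so $\wh{\Lambda}\cong\wh{\G}$ forces $\Lambda\cong\G$.  This completely sidesteps the ${\rm Out}(\wh{\Delta})$ problem that you correctly identify as the main obstacle to your cohomological matching.  The paper explicitly warns (citing Piwek) that arbitrary central extensions of a profinitely rigid $\Delta$ need \emph{not} be profinitely rigid, so the hypothesis that the centre is exactly $\Z$ must enter in an essential way; Wilkes' $3$--manifold theorem is precisely where it does.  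Your direct attack via goodness and $H^2$ would have to reproduce that work by hand, and your own caveats about ${\rm Out}(\wh{\Delta})$ versus ${\rm Out}(\Delta)$ and the torsion in $H^*(\Delta,\Z)$ show you have not yet closed that gap.
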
   
In order to prove this theorem, one has to extend the arguments about profinite rigidity in the previous sections
to cover central extensions (this is not hard). There are subtleties, however, and it is is not the case that  an arbitrary
central extension of $\Delta$ will be profinitely rigid \cite{piwek}. Special arguments apply in the case where the
centre is cyclic. Wilkes' work on the
profinite rigidity of Seifert fibred spaces within the class of 3-manifold groups  \cite{wilkes:sfs}  is used here.

The key step of reducing abstract profinite isomorphism to the study of Grothendieck pairs also requires
an extension of the scope of our arguments about Galois rigidity, this time to cover direct products 
and fibre products as well 
as central extensions.

\begin{theorem} \label{t3}
Let $\Delta(p,q,r)$ be a triangle group from list  (\ref{list-top}) or  (\ref{list-bottom}), let $M$ be a 
Seifert fibred space with  base orbifold $S^2(p,q,r)$ and let $\G=\pi_1M$.
Then, for  every finitely generated, residually
 finite group $\La$ with $\wh{\La}\cong\wh{\G\times\G}$, there is
 an embedding $\La\hookrightarrow\G\times\G$ that induces the isomorphism.
 \end{theorem}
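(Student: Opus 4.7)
The plan is to extend the Galois-rigidity approach underpinning Theorem~\ref{SFS_rigid} from $\G$ to the product $\G\times\G$. Using the hypothesised isomorphism $\wh{\La}\cong\wh{\G\times\G}$, I would transport the two canonical representations of $\G\times\G$ into $\PSL(2,\C)$ coming from the projections to $\G$, construct from the resulting representations of $\La$ a pair of surjections $\phi_1,\phi_2\colon\La\twoheadrightarrow\G$, and then verify that $\Phi=(\phi_1,\phi_2)$ yields an embedding $\La\hookrightarrow\G\times\G$ realising the prescribed profinite isomorphism.

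The first step is to describe the Zariski-dense representations of $\G\times\G$ in $\PSL(2,\C)$. The two factors of $\G\times\G$ commute element-wise, while in $\PSL(2,\C)$ the centraliser of any non-abelian subgroup is trivial; hence a Zariski-dense $\rho\colon\G\times\G\to\PSL(2,\C)$ must kill one factor, factoring as $\G\times\G\to\G\to\PSL(2,\C)$. The Galois-rigidity argument used in the proof of Theorem~\ref{SFS_rigid} then forces the resulting representation of $\G$ to descend through $\Delta$ and to be a Galois conjugate of the canonical inclusion $\Delta\hookrightarrow\PSL(2,\R)\subset\PSL(2,\C)$. Consequently $\G\times\G$ has exactly $2n_K$ Zariski-dense characters, where $K$ is the (real quadratic) trace field of $\Delta$, partitioned into two Galois orbits corresponding to the two factors.

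I would then transport these representations to $\La$ along the lines of \cite[\S4]{BMRS1}. Integrality of traces for $\Delta\subset\PSL(2,\R)$ allows each of the $2n_K$ basic representations to be extended to a bounded continuous representation $\wh{\G\times\G}\to\PSL(2,\overline{\Q_p})$ at every finite place. Pulling back along $\wh{\La}\cong\wh{\G\times\G}$ and restricting to $\La\subset\wh{\La}$ produces two distinguished Zariski-dense representations $\sigma_1,\sigma_2\colon\La\to\PSL(2,\C)$ whose trace fields, quaternion algebras and integrality data coincide with those from the two factors of $\G\times\G$. Applying the refined statement of \cite[Theorem 4.8]{BMRS1} together with the non-surjectivity result of \cite[Theorem A]{prasad} places the image of each $\sigma_i$ inside $\Delta$ and identifies it as a surjection, producing $\bar\phi_i\colon\La\twoheadrightarrow\Delta$.

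The main obstacle is to lift each $\bar\phi_i$ through the central extension $1\to\Z\to\G\to\Delta\to 1$ to a surjection $\phi_i\colon\La\twoheadrightarrow\G$ whose induced map on profinite completions coincides with the $i$-th projection of the given isomorphism $\wh{\La}\cong\wh{\G}\times\wh{\G}$. The obstruction to such a lift is the pull-back along $\bar\phi_i$ of the Seifert Euler class $e\in H^2(\Delta;\Z)$. I would argue its vanishing by first identifying the central $\wh\Z\times\wh\Z\le\wh{\G\times\G}$ inside $\wh{\La}$ (for instance as the centraliser of any lift of a suitable torsion-free finite-index subgroup of $\Delta\times\Delta$), using residual finiteness of $\La$ to select candidate generators of the two central $\Z$-factors inside $\La$, and then invoking the cohomological goodness of the 3-manifold group $\G$ to transfer the matching of $H^2(\G\times\G;\Z)$ with $H^2(\La;\Z)$ forced by the profinite isomorphism; Wilkes' treatment of Seifert fibred manifolds~\cite{wilkes:sfs} provides the template for the Euler-class bookkeeping. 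Once the $\phi_i$ are in hand, set $\Phi=(\phi_1,\phi_2)\colon\La\to\G\times\G$. By construction $\wh\Phi$ agrees projection-by-projection with the given isomorphism and hence equals it; since $\La$ is residually finite and $\G\times\G$ embeds in $\wh{\G\times\G}$, the factorisation $\La\hookrightarrow\wh{\La}\xrightarrow{\wh\Phi}\wh{\G\times\G}$ through $\Phi$ forces $\Phi$ to be injective, delivering the required embedding.
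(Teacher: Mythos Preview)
Your overall strategy---extend Galois rigidity to the product and then lift through the central $\Z$---matches the paper's one-line sketch, but two of your steps do not go through as written.

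First, invoking \cite[Theorem~A]{prasad} to force $\bar\phi_i\colon\La\to\Delta$ to be onto is not legitimate: that theorem concerns lattices in $\PSL(2,\C)$, and $\Delta$ is Fuchsian. More fundamentally, the logic of the single-group case breaks down here. There one argued that a proper image $H\lneq\G$ would have a finite quotient that $\G$ (and hence $\La$, since $\wh\La\cong\wh\G$) does not; but now $\wh\La\cong\wh{\G\times\G}$, and a proper $H_i\lneq\Delta$ can easily have all of its finite quotients realised by $\G\times\G$. A different argument, adapted to the product, is required.

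Second---and this is the main gap---the assertion that ``by construction $\wh\Phi$ agrees projection-by-projection with the given isomorphism'' is not justified. Your $\sigma_i$ are obtained by passing to $\PSL(2,\overline{\Q_p})$, applying a non-canonical field isomorphism with $\C$, and then conjugating the image into $\Delta$; nothing in that route pins $\wh{\phi_i}$ down as the $i$-th coordinate of the given isomorphism $\wh\La\to\wh{\G\times\G}$. Without that identification, surjectivity of each $\phi_i$ does not imply surjectivity of $\wh\Phi$ (the diagonal $\G\hookrightarrow\G\times\G$ is the cautionary example), and the injectivity argument for $\Phi$ collapses. This is precisely where the paper's reference to \emph{fibre products} enters: once both coordinate maps are onto, the image of $\Phi$ is a subdirect product, hence a fibre product of $\G$ with itself over a common quotient, and one must extend the Galois-rigidity analysis to such fibre products in order to conclude that $\wh\Phi$ is an isomorphism.
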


 With this reduction to Grothendieck pairs in hand,   
 the following   straightforward extension of Theorem \ref{t:BWclosed} establishes the desired profinite rigidity 
 among finitely presented groups. 
\begin{theorem}\label{t:not-fp}
For every Fuchsian group $F$
and every Seifert fibred space  $M$ with base orbifold $\H^2/F$, there are no Grothendieck pairs $\La\hookrightarrow
\pi_1M\times\pi_1M$ with $\La\neq \pi_1M\times\pi_1M$ finitely presented.
\end{theorem}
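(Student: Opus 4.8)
The plan is to reduce this to Theorem \ref{t:BWclosed} (the Bridson–Wilton closedness theorem for finitely presented, residually free groups) by an extension argument that handles the central $\Z$. Write $\G = \pi_1 M$, which fits into a central extension $1\to\Z\to\G\to F\to 1$ where $F$ is the Fuchsian group. Then $\G\times\G$ is a central extension $1\to\Z^2\to\G\times\G\to F\times F\to 1$, and $F\times F$ is residually free (indeed a subgroup of a product of surface groups after passing to a torsion-free finite-index subgroup, or handled directly via the fact that Fuchsian groups are virtually surface groups). Suppose $\La\hookrightarrow\G\times\G$ is a finitely presented subgroup of infinite index that induces an isomorphism of profinite completions. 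First I would pass to a torsion-free finite-index subgroup $F_0<F$, so $S_0 := S\times S$ is a surface group, and lift everything along the corresponding finite-index subgroups; finite presentability, the Grothendieck-pair property, and the index issues are all preserved under this move.

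Second, I would analyse the image $\bar\La$ of $\La$ in $F\times F$ under the quotient $q:\G\times\G\to F\times F$, together with $\La_0 := \La\cap\Z^2$. The key structural claim is that $\La_0$ has finite index in $\Z^2$: indeed, since $\wh\La\cong\wh{\G\times\G}$, the centre behaves correctly under the profinite isomorphism (the centre of a surface-by-cyclic-like group, or more precisely the maximal abelian normal subgroup structure, is a profinite invariant in this setting, and $\wh{\Z^2}$ must map isomorphically onto the closure of $\La_0$). Combined with the fact that $\La$ is finitely presented, standard arguments about extensions — if $1\to A\to \La\to B\to 1$ with $A$ finitely generated abelian and $\La$ finitely presented, then $B$ is finitely presented — show that $\bar\La = q(\La)$ is finitely presented. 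Since $\bar\La < F\times F$ and $F\times F$ is (virtually) residually free, Theorem \ref{t:BWclosed} applies: $\bar\La$ is separable in $F\times F$, so $\bar\La\hookrightarrow F\times F$ is not a Grothendieck pair. Hence either $\bar\La = F\times F$ (after the finite-index reduction, $\bar\La$ has finite index) or $\wh{\bar\La}\ne\wh{F\times F}$.

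Third, I would feed this back: the profinite isomorphism $\wh\La\cong\wh{\G\times\G}$ induces $\wh{\bar\La}\cong\wh{F\times F}$ (quotient by the respective central closures, using that $\wh{(\cdot)}$ is right exact and the central $\wh{\Z^2}$ maps isomorphically), so the previous step forces $\bar\La$ to have finite index in $F\times F$ — in fact, combined with the finite-index reduction, equal to $F_0\times F_0$ say. Now $\La$ sits inside $\G_0\times\G_0$ (the preimage of $F_0\times F_0$) as a subgroup with $\La_0 = \La\cap\Z^2$ of finite index in $\Z^2$ and $q(\La)$ all of $F_0\times F_0$; a short diagram chase shows $\La$ has finite index in $\G_0\times\G_0$, hence finite index in $\G\times\G$. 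But a finite-index subgroup that induces a profinite isomorphism must be the whole group (the index equals the index of the closure, which is $1$), so $\La = \G\times\G$, contradicting $\La\ne\G\times\G$.

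The main obstacle will be the second step — controlling the centre and transferring finite presentability from $\La$ down to $\bar\La$. One must be careful that $\La_0 = \La\cap\Z^2$ is genuinely of finite index and not, say, infinite index with $\La$ "wrapping diagonally"; this is where the profinite isomorphism is essential, and the cleanest route is probably to identify $\overline{\Z^2}\cong\wh{\Z^2}$ as (the closure of) a canonical characteristic subgroup — e.g. the centre, since for these Seifert groups $\G$ has centre exactly $\Z$ and $\G\times\G$ has centre exactly $\Z^2$ — and note that any isomorphism $\wh\La\cong\wh{\G\times\G}$ carries the closure of the centre of $\La$ onto the closure of the centre of $\G\times\G$, so $\wh{\La_0}\cong\wh{\Z^2}$, forcing $\La_0$ to be a finite-index subgroup of a free abelian group of the same rank. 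Once that is pinned down, the descent of finite presentability to $\bar\La$ is routine, and the rest is bookkeeping with finite-index subgroups and the right-exactness of profinite completion.
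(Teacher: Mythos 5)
Your overall strategy is the right one and matches the paper's: push the Grothendieck pair down the central extension $1\to\Z^2\to\G\times\G\to F\times F\to 1$, pass to a torsion-free finite-index subgroup of $F$ so that the quotient is a product of surface (or free) groups and hence residually free, and then invoke Theorem~\ref{t:BWclosed}. However, your ``key structural claim'' that $\La_0=\La\cap\Z^2$ has finite index in $\Z^2$, established via profinite invariance of the centre, is both unnecessary and not justified as stated. It is unnecessary because the only thing you need from $\La_0$ in order to descend finite presentability to $\bar\La=\La/\La_0$ is that $\La_0$ be finitely generated, and that is automatic: $\La_0$ is a subgroup of $\Z^2$. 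It is not justified because ``the closure of the centre of $\La$'' and ``the centre of $\wh\La$'' need not coincide for an arbitrary finitely generated residually finite group, and even granting that $Z(\wh{\G\times\G})=\overline{\Z^2}$ (which is true for these Seifert groups, but is one of the genuinely delicate points, not a freebie), you would still need to identify $Z(\La)$ with $\La_0$ and argue that $\overline{\La_0}=\overline{\Z^2}$, which is close to circular. The clean route avoids all of this: since $\wh{u}$ is surjective, $\La$ is dense in $\G\times\G$ in the profinite topology, so $\bar\La$ is dense in $F\times F$; as $\bar\La$ is finitely presented and, after the finite-index reduction, $F\times F$ is finitely presented and residually free, Theorem~\ref{t:BWclosed} gives that $\bar\La$ is closed, and closed plus dense forces $\bar\La=F\times F$. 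Then $\G\times\G=\La\cdot\Z^2$ with $\Z^2$ central, so $\La\norm\G\times\G$ with $(\G\times\G)/\La\cong\Z^2/\La_0$ abelian and finitely generated; density of $\La$ forces this quotient to have no non-trivial finite images, hence $\Z^2/\La_0=0$, hence $\La\supseteq\Z^2$ and $\La=q^{-1}(F\times F)=\G\times\G$, a contradiction. Your third paragraph reaches this conclusion but via a right-exactness argument that again implicitly requires $\overline{\La_0}=\overline{\Z^2}$; drop the centre argument and work with density and separability directly, and the proof becomes the ``straightforward extension'' the paper advertises.
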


To complete the proof of Theorem \ref{t:duke} we need finitely generated Grothendieck pairs.

\begin{theorem}\label{t:not-GR}
If $\Delta(p,q,r)$ is a triangle group from  list (\ref{list-top}), 
then there are infinitely many
Seifert fibred spaces with  base orbifold $S^2(p,q,r)$ whose fundamental group $\G$ has the 
property that there are infinitely many non-isomorphic, finitely generated groups $\La$
and inclusions $\La\hookrightarrow \G\times \G$ inducing isomorphisms $\wh{\La}\cong\wh{\G\times \G}$. 
 \end{theorem}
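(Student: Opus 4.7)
The plan is to construct the desired Grothendieck pairs via the fibre product machinery of Platonov--Tavgen (Proposition \ref{p:PT}), applied to a carefully chosen family of surjections out of the Seifert fibred groups $\G$.

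First, I would produce the infinite family of Seifert manifolds. Using the central-extension presentation of $\pi_1 M$ recalled in Section \ref{s:3M}, varying the twisting integer $d$ (which records the rational Euler number) yields infinitely many non-isomorphic closed Seifert fibred spaces $M_d$ with base orbifold $S^2(p,q,r)$; write $\G_d := \pi_1 M_d$. Each $\G_d$ sits in a central extension $1 \to \Z \to \G_d \to \Delta(p,q,r) \to 1$, so any surjection $\Delta(p,q,r) \twoheadrightarrow Q$ pulls back to a surjection $f_Q : \G_d \twoheadrightarrow Q$.

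Second, for each fixed $\G_d$ I would manufacture infinitely many targets $Q_n$, each finitely presented with $\widehat{Q_n}=1$ and $H_2(Q_n,\Z)=0$. The restriction to list (\ref{list-top}) enters here: each $\Delta(p,q,r)$ in that list is \emph{large}, containing a finite-index subgroup that retracts onto a non-abelian free group of substantial rank. Large groups admit rich families of epimorphisms, and by imitating the universal construction of \cite{mb:baum} that powers Theorem \ref{inf-many} --- building $Q_n$ as suitable HNN extensions or central extensions of Higman-type groups such as $B$ from (\ref{e:B}), and arranging that the homological invariants of $\ker f_{Q_n}$ vary with $n$ --- one obtains the required supply of targets. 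Applying Proposition \ref{p:PT} to each $f_{Q_n}$ then gives a finitely generated fibre product
\[
\La_n \;:=\; \{(g,h) \in \G_d \times \G_d : f_{Q_n}(g) = f_{Q_n}(h)\},
\]
together with an inclusion $\La_n \hookrightarrow \G_d \times \G_d$ inducing an isomorphism $\wh{\La_n}\cong \wh{\G_d\times\G_d}$. Since $\La_n \cong \ker(f_{Q_n}) \rtimes \G_d$ (with $\G_d$ acting by diagonal conjugation), distinct kernels yield non-isomorphic $\La_n$; the five-term exact sequence in homology, combined with $\widehat{Q_n}=1$ and $H_2(Q_n,\Z)=0$, shows that $H_1(\La_n,\Z)$ is governed by $H_1(\ker f_{Q_n},\Z)$, which we arrange to take infinitely many distinct values.

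The main obstacle will be the simultaneous control of $\widehat{Q_n}=1$, $H_2(Q_n,\Z)=0$ and compatibility with the torsion generators of $\Delta(p,q,r)$: one must ensure that the torsion elements of orders $p,q,r$ have images of matching order in $Q_n$, whereas the Higman/Baumslag-style constructions are naturally torsion-free. One therefore has to either factor through a torsion-free finite-index subgroup of $\Delta(p,q,r)$ (exploiting largeness) or engineer $Q_n$ to accommodate the relevant torsion. It is precisely the combination of largeness and accommodating torsion structure enjoyed by the triples in list (\ref{list-top}) that makes this possible, which is the evident reason those triples are singled out in the statement.
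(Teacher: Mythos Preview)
Your approach has a genuine gap, and your diagnosis of why list (\ref{list-top}) is singled out is incorrect.

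The direct Platonov--Tavgen route requires an epimorphism $\G\twoheadrightarrow Q$ (equivalently $\Delta(p,q,r)\twoheadrightarrow Q$) with $\widehat{Q}=1$. But $\Delta(p,q,r)$ is generated by torsion elements of orders $p,q,r$, so any homomorphism to a torsion-free group such as $G_4$ or $B$ is trivial. Your two proposed escapes both fail. Passing to a torsion-free finite-index subgroup $H<\G$ produces a fibre product in $H\times H$, giving $\widehat{\La}\cong\widehat{H\times H}\not\cong\widehat{\G\times\G}$; this is not a Grothendieck pair in $\G\times\G$. As for ``engineering $Q_n$ to accommodate the relevant torsion'', you give no construction, and producing infinitely many finitely presented quotients of a fixed triangle group with $\widehat{Q_n}=1$ and $H_2(Q_n,\Z)=0$ is a serious problem in its own right, not a routine adaptation of \cite{mb:baum}. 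Your explanation that list (\ref{list-top}) is distinguished by largeness is also wrong: \emph{every} hyperbolic triangle group is large (it contains a closed surface group of finite index), so largeness cannot separate (\ref{list-top}) from (\ref{list-bottom}).

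The paper's route is quite different. Rather than seeking $\G\to Q$, one arranges the Seifert invariants so that $\G=\pi_1M$ sits with finite index inside $[G,G]$ for an auxiliary group $G$ that maps onto a non-elementary hyperbolic group and has $H_1(G,\Z)$ finite and $H_2(G,\Z)=0$. One then runs a homological argument adapted from Bass--Lubotzky \cite{BL} to produce the Grothendieck pairs inside $\G\times\G$. It is this commutator-subgroup embedding condition---not largeness or torsion compatibility---that selects the triples in (\ref{list-top}) (note that each is commensurable, via an index-2 inclusion, with the corresponding triple in (\ref{list-bottom})), and this is what your proposal is missing.
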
 
 Note that at this stage,  not all of the groups from Theorem \ref{SFS_rigid} are used:
we need constraints on the Seifert invariants that ensure  
$\G=\pi_1M$ has finite index in $[G,G]$ where $G$ is a group that maps onto a non-elementary hyperbolic group and has $H_1(G,\Z)$ finite and $H_2(G,\Z)=0$.  
These conditions facilitate a homological argument, adapted from an idea of
Bass and Lubotzky \cite{BL}, that allows us to modify the template for constructing Grothendieck pairs that was
described in Section \ref{s:groth}.

\section{Open Questions} 

The most celebrated problem concerning profinite rigidity is  Remeslenikov's Question (\ref{q:rem}), where the answer
is widely believed to be positive: see \cite{mb:catriona} for a discussion of related results and recent progress,
most notably \cite{Jaikin-Zapirain:parafree}.
\begin{conjecture}
Finitely generated free groups are profinitely rigid.
\end{conjecture}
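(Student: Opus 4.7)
The plan is to show that if $\La$ is a finitely generated, residually finite group with $\wh{\La}\cong\wh{F_n}$, then $\La\cong F_n$. First I would harvest the profinite invariants: the abelianisation of $\La$ is $\Z^n$; by the finite-index subgroup correspondence of Remark \ref{blob} combined with L\"uck's approximation theorem, one expects $b_1^{(2)}(\La)=n-1$; every finite group arises as a quotient of $\La$ (since $F_n$ already does); and the lower central series quotients $\La/\gamma_k\La$ coincide with those of $F_n$, so $\La$ is \emph{parafree of rank $n$} in the sense of G.~Baumslag. Moreover, the subgroup lattice correspondence combined with the Nielsen-Schreier formula forces every finite-index subgroup of $\La$ to have abelianisation of the same rank as the corresponding subgroup of $F_n$, a strong homological constraint on the entire lattice of finite-index subgroups.

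The strategic target is to prove that $\La$ has cohomological dimension at most $1$: by the Stallings-Swan theorem this forces $\La$ to be free, and matching abelianisations then fixes the rank at $n$. The profinite completion $\wh{F_n}$ has profinite cohomological dimension $1$, so if one could prove that $\La$ is cohomologically \emph{good} in the sense of Serre -- i.e.~the comparison map $H^\ast(\wh{\La},A)\to H^\ast(\La,A)$ is an isomorphism for every finite $\La$-module $A$ -- one could transfer the cohomological dimension from the profinite world to the discrete world. In parallel, I would exploit the very restrictive equality $b_1^{(2)}(\La)=n-1$ together with Jaikin-Zapirain's $\ell^2$-methods for parafree groups \cite{Jaikin-Zapirain:parafree} and Kielak's algebraic fibering machinery, aiming to force sufficiently many splittings of $\La$ to conclude freeness directly, thereby bypassing any appeal to goodness.

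The hard part, and the reason this problem has resisted attack for half a century, is precisely that neither cohomological goodness, nor $\mathrm{cd}(\La)\leq 1$, nor even the vanishing of higher $\ell^2$-Betti numbers is known to be a profinite invariant; the Kammeyer-Sauer examples show that the analogous statement for higher $\ell^2$-Betti numbers is in fact false. Consequently one cannot lift structural features of $F_n$ wholesale from $\wh{F_n}$ to $\La$. The critical technical step on which I would concentrate is essentially what has come to be called the \emph{parafree conjecture}: that a finitely generated parafree group of rank $n$ sharing all finite quotients with $F_n$ must be free. This is where I expect any attack to stand or fall, since the final rank count and the assembly of profinite invariants are comparatively routine, whereas extracting genuine freeness from purely profinite data seems to require either a new source of geometric or algebraic structure on $\La$ or a substantial strengthening of the existing $\ell^2$-theoretic machinery.
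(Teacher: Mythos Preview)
The statement you are addressing is a \emph{Conjecture} in the paper, not a theorem: it appears in the final section under ``Open Questions'' as the formalisation of Remeslennikov's Question~\ref{q:rem}, and the paper offers no proof whatsoever. There is therefore nothing in the paper to compare your proposal against. Your write-up is an honest outline of the known strategy and its obstructions rather than a proof, and you say as much in your final paragraph; so in that sense there is no error, but you should be aware that you are not being asked to reproduce an argument that exists.

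Two technical remarks on the outline itself. First, the profinite invariance of $b_1^{(2)}$ stated in the paper (item (4) of subsection 5.2) is for \emph{finitely presented} residually finite groups, because L\"uck's approximation theorem needs this hypothesis; since you only assume $\La$ is finitely generated, you cannot invoke $b_1^{(2)}(\La)=n-1$ without further argument. Second, your identification of the crux is accurate and matches what the paper gestures at via the reference to \cite{Jaikin-Zapirain:parafree}: the passage from ``parafree with the same profinite completion as $F_n$'' to ``free'' is exactly the missing step, and neither goodness nor $\mathrm{cd}\le 1$ is known to transfer across a profinite isomorphism. Your proposal is a fair survey of the landscape, but it is not a proof, and the paper does not pretend that one exists.
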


The consensus on whether the following conjecture is true is less uniform: it is widely believed
that lattices in ${{\rm{PSL}}}(2,  \R)$ are profinitely rigid,  but many would favour a more cautious conjecture
in the Kleinian case, simply asking that a finitely generated, residually finite group $\La$
with the same finite quotients as a  lattice in ${{\rm{PSL}}}(2,  \mathbb{C})$  should itself be a lattice.  
If we knew that $\La$ was a lattice in ${{\rm{PSL}}}(2,  \mathbb{C})$,
Liu's finite genus theorem \cite{liu} would tell us that there are only finitely many possibilities for $\La$; 
it is unclear if this finite ambiguity can be removed but recent results of \cite{liu} and \cite{W:if} constrain it.

\begin{conjecture}
All lattices in ${{\rm{PSL}}}(2,  \R)$ and ${{\rm{PSL}}}(2,  \mathbb{C})$ are profinitely rigid (in the absolute sense).
\end{conjecture}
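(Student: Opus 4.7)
The plan is to split the conjecture into two pieces: a recognition step (a) asserting that any finitely generated, residually finite $\La$ with $\wh{\La}\cong\wh{\G}$ must itself act on $\H^2$ or $\H^3$ as a lattice, and a relative rigidity step (b) within the class of lattices. Step (b) is close to being in hand. For Fuchsian $\G$, the Bridson-Conder-Reid theorem already gives profinite rigidity within the class of lattices in connected Lie groups. For Kleinian $\G$, Liu's finite-genus theorem shows that only finitely many lattices in ${\rm{PSL}}(2,\C)$ can share $\wh{\G}$, and the results of \cite{W:if} together with the subgroup rigidity of \cite{prasad} progressively shrink this ambiguity. So the serious obstacle is the recognition step (a), which is an absolute-rigidity problem of the hardest kind.

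For step (a) in the Fuchsian case the plan is to exploit $\ell_2$-invariants and good subgroup structure. Since the first $\ell_2$-Betti number is a profinite invariant of finitely presented, residually finite groups, one reads off the Euler characteristic of $\G$ from $\wh{\La}$. Combined with the profinite invariance of the lattice of finite-index subgroups (Remark~\ref{blob}), the cohomological goodness of $\G$ in the sense of Serre, and a profinite cohomological-dimension calculation, one would aim to force $\La$ to be virtually a surface group or a virtually free group acting discretely on $\H^2$, at which point Bridson-Conder-Reid closes the argument. Remeslennikov's Question is subsumed in this conjecture as the case of non-cocompact lattices, so any serious attempt must also treat $\G=F_n$; recent progress of Jaikin-Zapirain in the parafree setting suggests that the $\ell_2$-route is viable, but the final step of recognising an abstract residually finite group as surface-by-finite or free-by-finite from profinite data alone remains open.

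For step (a) in the Kleinian case the plan is to extend the Galois-rigidity architecture of Theorem~\ref{thm:bmrs} to arbitrary trace fields. One would try to produce, at every finite place, bounded Zariski-dense representations of $\wh{\La}$ by transporting Zariski-dense representations of $\G$ via non-continuous field isomorphisms $\C\cong\overline{\Q_p}$, as in \cite[Lemma~4.6]{BMRS1}. Restricting these back to $\La\hookrightarrow\wh{\La}$ would produce a Zariski-dense representation $\rho\colon\La\to{\rm{PSL}}(2,\C)$ whose number-theoretic invariants (trace field and quaternion algebra) match those of $\G$. A generalisation of \cite[Theorem~A]{prasad} covering all Kleinian lattices would then force $\rho(\La)$ to equal $\G$, and the Hopf property for topologically finitely generated profinite groups (a variant of Lemma~\ref{l:hopf}) upgrades the resulting surjection $\La\twoheadrightarrow\G$ to an isomorphism.

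The main obstacle is the arithmetic input in the Kleinian case. The proof of Theorem~\ref{thm:bmrs} depends essentially on the trace field being of very small degree (e.g.~$\Q(\sqrt{-3})$) so that its full Galois orbit and the associated quaternion algebra are completely controlled; for a lattice whose trace field has arbitrary degree and discriminant, pinning down the image and the arithmetic data of $\rho$ uniformly seems to require a canonical way to see the trace field inside $\wh{\La}$ itself, which no current tool provides. The difficulty is only sharper for non-arithmetic lattices in either ${\rm{PSL}}(2,\R)$ or ${\rm{PSL}}(2,\C)$, where trace-field methods give no purchase and the Zariski-dense character variety is typically positive-dimensional, so that Galois rigidity in the sense of Definition~\ref{def:galois-rigid} fails outright. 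Closing this gap appears to demand a genuinely new profinite invariant — perhaps drawn from Jaikin-Zapirain's $\ell_2$-technology, from a strengthening of Liu's finite-volume-rigidity circle of ideas, or from the continuous cohomology of $\wh{\La}$ with pro-$\ell$ coefficients — that reads off the geometric structure of a candidate lattice directly from its profinite completion.
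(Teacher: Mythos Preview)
The statement you are addressing is a \emph{conjecture}, not a theorem: the paper offers no proof and explicitly presents it as open, with only the surrounding commentary indicating that the Fuchsian case is widely believed while ``many would favour a more cautious conjecture in the Kleinian case''. There is therefore nothing in the paper to compare your proposal against as a proof.

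What you have written is not a proof either, and to your credit you do not pretend otherwise: it is a research programme with clearly flagged gaps (``remains open'', ``appears to demand a genuinely new profinite invariant''). Your decomposition into a recognition step (a) and a relative step (b) is exactly the framing the paper itself adopts in the paragraph preceding the conjecture, and your assessment of step (b) --- Bridson--Conder--Reid for Fuchsian groups, Liu's finite-genus theorem plus \cite{W:if} and \cite{prasad} in the Kleinian case --- matches the paper's. You are also right that Remeslennikov's question is subsumed in the Fuchsian case, which the paper notes separately. Your discussion of the obstacles to extending the Galois-rigidity machinery of \cite{BMRS1} beyond small trace fields, and its outright failure for non-arithmetic lattices with positive-dimensional character varieties, is accurate and goes somewhat beyond what the paper says explicitly.

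In short: there is no genuine gap to name because you have not claimed a proof, and the paper has not given one. Your outline is a fair summary of where the problem stands and why it is hard; just be sure the reader understands from the outset that this is a discussion of an open conjecture, not a proof attempt.
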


With Theorem \ref{t:duke} in mind, one might weaken the above conjectures and ask only for rigidity 
among finitely presented groups.  We saw in Section \ref{s:groth} that the difference between finite generation and finite
presentation definitely matters is the context of direct products of free groups.   

\begin{conjecture} Let $F$ and $F'$ be finitely generated free groups. If a finitely presented residually free group
$\G$ has the same finite quotients as $F\times F'$, then $\G\cong F\times F'$.
\end{conjecture}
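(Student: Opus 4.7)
The strategy is to use the structure theorem for finitely presented residually free groups of Bridson, Howie, Miller and Short to embed (a finite-index subgroup of) $\G$ as a subdirect product in a product of limit groups, to use profinite invariants to identify those limit groups with $F$ and $F'$, and then to close the argument via Theorem \ref{t:BWclosed}. The unifying idea is that any finitely presented subgroup of $F\times F'$ (itself finitely presented and residually free) is closed in the profinite topology, so an inclusion into $F\times F'$ that induces a profinite isomorphism must be onto.

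By the Bridson--Howie--Miller--Short structure theorem, after passing to a finite-index subgroup $\G_0\le\G$ one obtains an embedding $\G_0\hookrightarrow L_1\times\cdots\times L_k$ into a direct product of limit groups, with each projection $\G_0\onto L_i$ surjective. I would next use the standard profinite invariants --- the rank of the abelianisation, the first $\ell_2$-Betti number, subgroup growth, the lattice of finite-index subgroups, and the behaviour of centralisers --- all of which are determined by $\wh{\G}_0$ as a finite-index subgroup of $\wh{F}\times\wh{F'}$, to show that $k=2$ and that each $L_i$ is free. A non-free limit group factor would contain either a non-cyclic free abelian subgroup or a closed hyperbolic surface subgroup; the corresponding profinite signatures (enlarged centralisers, surface classes in $H^2$ with finite coefficients detected via Serre's cohomological goodness of limit groups, excess subgroup growth) would be incompatible with the ambient $\wh{F}\times\wh{F'}$. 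Matching ranks of abelianisations then identifies $\{L_1,L_2\}$ with $\{F,F'\}$. Now $\G_0\le F\times F'$ is a finitely presented subdirect product surjecting onto each factor; the Baumslag--Roseblade--Mihailova theorem forces $\G_0$ to have finite index in $F\times F'$. A bookkeeping argument using Theorem \ref{t:BWclosed} applied both to $\G_0\le\G$ and to $\G_0\le F\times F'$, combined with $\wh{\G}\cong\wh{F\times F'}$, collapses all the indices to $1$ and yields $\G\cong F\times F'$.

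The main obstacle is the identification of the limit-group factors as free. This amounts to a constrained form of profinite rigidity for free groups inside the class of limit groups: its unconditional version (without the ambient product structure on the profinite completion) is essentially Remeslennikov's question and remains out of reach. The leverage available here that is not available in the unconditional setting is that each $\wh{L}_i$ must sit as a quotient of a finite-index subgroup of $\wh{F}\times\wh{F'}$, which is significantly more restrictive than a bare profinite isomorphism $\wh{L}\cong\wh{F}$; this extra structure, combined with Wilton's results on profinite invariants of limit groups, should rule out abelian and surface JSJ vertices in the $L_i$. The final index-bookkeeping step should be routine once the structural identification is in hand, since residual freeness of both $\G$ and $F\times F'$ forces closures to track indices exactly via Theorem \ref{t:BWclosed}.
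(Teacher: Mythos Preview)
This statement appears in the paper as an open \emph{conjecture} (in the final section on Open Questions); the paper offers no proof, so there is nothing to compare your proposal against. What you have written is a strategy for attacking an open problem, and you are candid about this in your final paragraph.

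The gap you flag is real and is the heart of the matter: proving that the limit-group factors $L_i$ are free, knowing only that each $\wh{L}_i$ arises as a quotient of an open subgroup of $\wh{F}\times\wh{F'}$, is not something current technology delivers. Your heuristic that surface or higher-rank abelian pieces in the $L_i$ would leave detectable profinite traces is plausible, but no such detection theorem is available; distinguishing surface groups from free groups profinitely is itself entangled with Question~\ref{q:rem}. There is also a secondary gap in your endgame: the isomorphism $\wh{\G}\cong\wh{F\times F'}$ is abstract and need not be induced by your embedding $\G_0\hookrightarrow L_1\times L_2$, so even once you know that $\G_0$ sits with finite index in both $\G$ and $F\times F'$, Theorem~\ref{t:BWclosed} does not by itself collapse those indices or produce an isomorphism $\G\cong F\times F'$; a further argument, comparing groups commensurable with $F\times F'$ that share its profinite completion, would still be required.
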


For groups of higher-dimensional origin, the most compelling question in the field is the following. 
\begin{question} Is ${\rm{SL}}(n,  \Z)$ profinitely rigid when $n\ge 3$?
\end{question}

Using results from Spitler's thesis \cite{spitler} that were inspired by ideas of Lubotzky (and are close in spirit
to our discussion of Galois rigidity), one can reduce this question to the challenge of deciding whether ${\rm{SL}}(n,  \Z)$ 
is Grothendieck rigid. But since the subgroups of ${\rm{SL}}(n,  \Z)$  are so varied and mysterious,  this remains a very difficult challenge.


\begin{thebibliography}{999} 


\bibitem{adian} S.I. ~Adian, {\em The unsolvability of certain algorithmic problems in the
theory of groups}, Trudy Moskov. Mat. Obsc.~{\bf{6}}(1957), 231--298.
 


\bibitem{agol} I.~Agol, {\em The virtual Haken conjecture}, with an
appendix by I. Agol, D. Groves, and J. Manning, Documenta Math. {\bf 18} 
(2013), 1045--1087.
   
   
\bibitem{aka1} M.~Aka, {\em Arithmetic groups with isomorphic finite quotients},
J. Algebra {\bf 352} (2012), 322--340.


\bibitem{aka2} M.~Aka, {\em Profinite completions and Kazhdan's Property T,}  Groups Geom.~Dyn.
{\bf{6}} (2012),  221--229. 


\bibitem{AFW} M.~Aschenbrenner, S.~Friedl and H.~Wilton, {\em 3-Manifold Groups},  Lecture Notes in Math., European Mathematical Society,  Z\"urich, 2015.

\bibitem{BL} H.~Bass and A.~Lubotzky,
{\em Nonarithmetic superrigid groups: counterexamples to Platonov’s
conjecture},  Annals of Math.{\bf{151}} (2000), 1151--1173.

\bibitem{BCG} I. Bauer, F. Catanese and F.~Grunewald,
{\em Faithful actions of the absolute Galois group on connected components of moduli spaces}, 
Invent.  Math.  {\bf{199}} (2015),  859--888.

\bibitem{Baum74} G.~Baumslag, {\em Residually finite groups with the same
finite images}, Compositio Math. {\bf 29} (1974), 249--252.

\bibitem{bbms}  
G.~Baumslag, M.R.~Bridson, C.F.~Miller III, H.~Short, \emph{Fibre products, 
non-positive curvature, and decision problems},
Comm.~Math.~Helv.  {\bf 75} (2000), 457--477.



\bibitem{BDH} G.~Baumslag,  E.~Dyer and  A.~Heller, 
{\em The topology of discrete groups},
J. Pure Appl. Algebra {\bf 16} (1980),   1--47.  

\bibitem{BaRo} G.~Baumslag and J.~Roseblade, {\em Subgroups of direct products of free groups}, J.~London Math.~Soc. {\bf 30} (1984), 44--52.

\bibitem{BestBrady}  M.~Bestvina and N.~Brady, 
{\em Morse theory and finiteness properties of groups.},
 Invent.~Math.~{\bf{129}} (1997),  445--470.

\bibitem{BNS} R.~Bieri,  W.D.~Neumann and R.~Strebel, 
{\em{A geometric invariant for discrete groups}}, Invent. Math. {\bf{90}}, (1987), 451--477.


\bibitem{BF} M. Boileau and S. Friedl, {\em The profinite completion of
3-manifold groups, fiberedness and the Thurston norm}, 
in What's Next?: The Mathematical Legacy of William P. Thurston, Annals of Math. Study {\bf 205}, 21--44, Princeton 
Univ.~Press, NJ, 2020.


\bibitem{boone} W.W. Boone, {\em Certain simple unsolvable problems in group theory}, I,
II, III, IV, V, VI, Nederl. Akad.Wetensch Proc. {\bf 57} (1954) 231--237, 492--497; {\bf 58} (1955) 252--256, 571--577; {\bf 60} (1957) 22-27, 227--232.


\bibitem{boris}
V. Borisov, {\em Simple examples of groups with unsolvable word problem}, 
Math. Zametki {\bf 6} (1969) 521--532; English transl., Math. Notes {\bf 6} (1969)  768--775.


\bibitem{mb-jems} M.R.~Bridson, 
{\em The strong profinite genus of a finitely presented group can be infinite},
 J. Eur. Math. Soc.  {\bf 18} (2016), no. 9, 1909--1918.

\bibitem{mrb:agt} M.R.~Bridson, 
{\em Profinite isomorphisms and fixed-point properties}, 
Alg.~Geom.~Topol.~{\bf{24}} (2024), 4103--4114.

\bibitem{mb:baum} M.R.~Bridson,  
{\em The homology of groups,  profinite completions,  and echoes of Gilbert
Baumslag},  in ``Elementary Theory of Groups and Group Rings,  and Related
Topics",  pp.~11--28,  De Gruyter,  Berlin,  2020. 

 
\bibitem{mb:raag} M.R.~Bridson, {\em On the recognition of right-angled Artin groups},
Glasgow Math. J.  {\bf{62}} (2020),  473--475.
 
 \bibitem{mb:h2} M.R.~Bridson, {\em The Schur multiplier, profinite completions and decidability,} Bull.~London
Math.~Soc.~{\bf{42}} (2010),  412--416.

\bibitem{mb:catriona} M.R.~Bridson, {\em Profinite rigidity and free groups}, in ``Mathematics Going Forward",
J-M.~Morel, B.~Teissier (eds.),  Lect. Notes Math, vol 2313,  pp.~233--240, Springer Verlag,  Basel. 


\bibitem{mb:FbyF} M.R.~Bridson, {\em Profinite completions of free-by-free groups contain everything},
Quart.~J.~Math., {\bf{75}} (2024),  139--142.

\bibitem{BCR} M. R.~Bridson, M.D.E.~Conder and A. W.~Reid, {\em Determining 
Fuchsian groups by their finite quotients},  Israel J. Math.  {\bf{214}} (2016),  1--41.

\bibitem{BELS}
M.R.~Bridson,  D.M.~Evans,  M.W.~Liebeck and D.~Segal,
{\em  Algorithms determining finite simple images of finitely presented groups},
Invent.  Math.  {\bf{218}} (2019),  623--648.


\bibitem{BG} M.R.~Bridson and F.~Grunewald, {\em Grothendieck's problems concerning profinite completions and representations of groups},
Annals of Math.~{\bf 160}  (2004), 359--373.



\bibitem{BH}
 M.R.~Bridson and A.~Haefliger,
``Metric Spaces of Non-Positive Curvature", Grund. Math. Wiss.
{\bf 319}, Springer-Verlag, Heidelberg-Berlin, 1999.
 

\bibitem{BHMS}
M.R. Bridson, J. Howie, C.F. Miller~III and H.~Short, 
{\em Subgroups of direct products of limit groups},  Annals of Math. {\bf 170} (2009),  1447--1467.


\bibitem{BMRS1} M.R.~Bridson, D.B.~McReynolds, A.W.~Reid and R.~Spitler, \emph{Absolute profinite rigidity and hyperbolic geometry}, Annals of Math. {\bf 192} (2020), 679--719.
 

\bibitem{BMRS2} M.R.~Bridson, D.B.~McReynolds, A.W.~Reid and R.~Spitler, \emph{On the profinite rigidity of triangle groups}, Bull. London Math. Soc. {\bf 53} (2021), 1849--1862.  

\bibitem{BP} M.R.~Bridson and P.~Piwek,
{\em Profinite rigidity for free-by-cyclic groups with centre},
{\tt{arXiv:2409.20513}}.


\bibitem{BR:fig8} M.R.~Bridson and A.W.~Reid, {\em Profinite rigidity, fibering and the figure-eight knot}, in What's Next?: The Mathematical Legacy of William P. Thurston, Annals of Math. Study {\bf 205}, 45--64, Princeton Univ.~Press, NJ, 2020.


\bibitem{prasad} M.~R.~Bridson, and A.~W.~Reid, {\em Profinite rigidity, Kleinian groups, and the cofinite Hopf property}, Michigan Math. J. {\bf 72} (2022), special issue in honor of Gopal Prasad, 25--49.

\bibitem{BRS} M.R.~Bridson,  A.~W.~Reid and R.~Spitler, 
{\em Absolute profinite rigidity, direct products, and finite presentability},
Duke Math.~J., to appear, {\tt{arXiv:2312.06058}}.


\bibitem{BRW}
M.R.~Bridson,  A.W.~Reid and H.~Wilton,  {\em Profinite rigidity and surface bundles over the circle},
Bull.  London Math.  Soc.  {\bf 49} (2017),  831--841.


  
  
\bibitem{BW}  M.R.~Bridson and H.~Wilton, 
{\em The triviality problem for profinite completions},
Invent. Math. {\bf 202} (2015), 839--874. 
  
  
\bibitem{BW:closed} M.R.~Bridson and H.~Wilton, 
{\em Subgroup separability in residually free groups},
Math. Z.  {\bf{260}} (2008),  25--30.

\bibitem{BW:ggd} M.R.~Bridson and H.~Wilton, 
{\em On the difficulty of presenting finitely presentable groups},
Groups Geom. Dyn. {\bf 5} (2011), no. 2, 301--325.



\bibitem{BW:iso} M.R.~Bridson and H.~Wilton, 
{\em The isomorphism problem for profinite completions of finitely presented,  residually finite groups},
Groups Geom. Dyn. {\bf 8} (2014),  733--745.



\bibitem{brown}  K.S.~Brown, ``Cohomology of groups",  Graduate Texts in Mathematics 87, Springer-Verlag, Berlin-Heidelberg-New York (1982).

\bibitem{owen} O.~Cotton-Barratt, 
{\em Detecting ends of residually finite groups in profinite completions.},
Math.~Proc.~Camb.~Phil.~Soc.  {\bf{155 }}(2013),  379--389.

\bibitem{CW} T. Cheetham-West, {\em Absolute profinite rigidity of some closed fibred hyperbolic 3-manifolds}, Math. Res.~Lett.~{\bf 31} (2024), 615--638.


\bibitem{tam} T.~Cheetham-West, A.~Lubotzky, A.W.~Reid, R.~Spitler,
{\em Property FA is not a profinite property}, Groups Geom. Dyn.~(online first),  DOI:10.4171/GGD/802.

\bibitem{dehn}
M.~Dehn.
\newblock {\em \"{U}ber unendliche diskontinuierliche {G}ruppen},
\newblock  Math. Ann.,  {\bf{71}} (1911),  116--144.
 

\bibitem{davis} M.W.~Davis,  ``Infinite Group Actions on Polyhedra",  
Ergeb.  der Math., vol 77, Springer Verlag, Berlin, 2024.

\bibitem{same-quots}  J.D.~Dixon, E.W.~Formanek, J.C.~Poland and L.~Ribes,  
{\em Profinite completions and isomorphic finite quotients}, 
J.~Pure Appl.~Algebra {\bf 23} (1982), 227--231.
 
 
\bibitem{fruchterMorales}
 J.~Fruchter and I.~Morales,
 {\em Virtual homology of limit groups and profinite rigidity of direct
products,} Israel J.~Math.  (to appear),  {\tt{arXiv:2209.14925}}.

\bibitem{funar} L.~Funar, {\em Torus bundles not distinguished by TQFT invariants},
Geom.~Topol.~{\bf 17} (2013), 2289--2344.
 
 
 \bibitem{weeks} D.~Gabai, G. R.~Meyerhoff and P.~Milley, 
  {\em Minimum volume cusped hyperbolic three-manifolds, }
 J.~Amer.~Math.~Soc. {\bf{22}} (2009), 1157--1215.
  

\bibitem{gromov} M. Gromov, Hyperbolic groups, in {\em Essays in Group Theory} (S.M. Gersten, ed.),
MSRI vol. 8, pp. 75--263. Springer-Verlag, New York, 1987.

\bibitem{gromov2}
M. Gromov, 
{\em Asymptotic invariants of infinite groups,} Geometric Group Theory,  Vol. 2
 (Sussex, 1991),  LMS Lect. Notes.,  vol. 182.  CUP, Cambridge 1993.
 
 \bibitem{gross}
 E.K.~Grossman,
{\em On the residual finiteness of certain mapping class groups}, 
J.~London Math.~Soc. {\bf{2}} (1974/75),  160--164.

\bibitem{groth} A. Grothendieck,
{\em Repr\'{e}sentations lin\'{e}aires et compactification profinie des groupes discrets},
Manuscripta Math.  {\bf{2}} (1970),  375--396.
 
\bibitem{grun}
F. Grunewald, {\em On some groups which cannot be finitely presented}, Jour. 
London Math. Soc.~{\bf 17} (1978), 427--436.


\bibitem{GJZ} F.J.~Grunewald, A.~Jaikin-Zapirain, and P.A.~Zalesskii, {\em Cohomological goodness and the profinite completion of Bianchi groups},  Duke Math. J. {\bf 144} (2008), 53--72.

\bibitem{GPS} 
F.J.~Grunewald, P.F.~Pickel and D.~Segal, {\em Polycyclic groups with isomorphic finite quotients}, Annals of Math. {\bf 111} (1980), 155--195.  

\bibitem{GZ} F.~Grunewald and P.A.~Zalesskii, {\em Genus for groups}, J. Algebra {\bf 326} (2011), 130--168.


 

\bibitem{HW} F. Haglund and D. Wise, {\em Special cube complexes}, 
Geom. Funct. Anal.  {\bf 17}  (2008),  1551--1620. 


 
\bibitem{hempel:rf} J. Hempel, {\em Residual finiteness for 3-manifolds},  in
Combinatorial Group Theory and Topology (Alta, Utah, 1984 ),  S.M.~Gersten and J.R.~Stallings
(eds.),   pp.379--396,
Ann. of Math. Stud., no. 111, Princeton Univ.~Press,   NJ, 1987.

 
\bibitem{hempel} J. Hempel, {\em Some 3-manifold groups with the same 
finite quotients},  {\tt{arXiv:1409.3509}}.



\bibitem{hig}
G. Higman, {\em Subgroups of finitely presented groups},
Proc.  Royal Soc.  Series A {\bf 262} (1961),  455--475.

 
\bibitem{higman}
G. Higman, {\em A finitely generated infinite simple group},
J. London Math Soc. {\bf 26} (1951), 61--64.

 
 

\bibitem{HK}
S.~Hughes and M.~Kudlinska, 
{\em On profinite rigidity amongst free-by-cyclic groups I: the generic case},
{\tt{arXiv:2303.16834}}.

\bibitem{kahler-gang}
S.~Hughes, C.~Llosa Isenrich, P.~Py, M.~Stover, S.~Vidussi,
{\em Profinite rigidity of K\"{a}hler groups: Riemann surfaces and subdirect products},
{\tt{arXiv:2501.13761}}.

\bibitem{Jaikin-Zapirain:parafree} A.~Jaikin-Zapirain,  {\em The finite and solvable genus of finitely generated free and surface
groups},  Res. Math. Sci. {\bf{10}} (2023),  art.~44. 
 

\bibitem{jaikin:fib} A.~Jaikin-Zapirain,  {\em Recognition of being fibered for compact 
3–manifolds},  
Geom.~Topol. {\bf{24}} (2020),  409--420.
 
\bibitem{JL}
A.~Jaikin-Zapirain and A.~Lubotzky, 
{\em Some remarks on Grothendieck pairs},
Groups, Geom.~Dyn. ~(to appear),  {\tt{arXiv:2401.02229}}.



\bibitem{KaS}
H.~Kammeyer and R.~Sauer, 
{\em S-arithmetic spinor groups with the same finite quotients
and distinct $\ell^2$-cohomology},  Groups Geom. Dyn.  {\bf{14}} (2020),  857--869.

\bibitem{KN} M.~Kassabov and N.~Nikolov,  {\em{Profinite properties and property Tau}},
Oberwolfach Report No. 28/2008,  p.~1550.

\bibitem{kielak} D.~Kielak, 
{\em Residually finite rationally solvable groups and virtual fibring,}
J. Amer. Math. Soc.  {\bf{33}} (2020),  451--486.

\bibitem{KS} S.~Kionke and E.~Schesler,
{\em Amenability and profinite completions of finitely generated groups}, Groups Geom.~Dyn.  {\bf{17}} (2023), 
1235--1258.

\bibitem{lack} M.~Lackenby,
{\em Detecting large groups.},  J. Alg.  {\bf{324}} (2010),  2636--2657.

\bibitem{liu} Y.~Liu,  {\em Finite volume hyperbolic $3$-manifolds are almost determined by their finite quotients}, 
Invent.~Math. {\bf231} (2023), 741--804.

\bibitem{liu-vol} Y.~Liu,  {\em Finite quotients,  arithmetic Invariants, and hyperbolic volume},
Peking Math J.~{\bf{8}} (2025), 143--189.

\bibitem{LR} D.D.~Long and A.W.~Reid, {\em Grothendieck's problem for 3-manifold groups}, Groups Geom.~Dyn. {\bf 5} (2011),
479--499.

\bibitem{LoLu} J. ~Lott and  W. ~L\"uck, {\em $L^2$-topological invariants of 
3-manifolds}, Invent. Math. {\bf 120} (1995), 15--60. 


\bibitem{lub-FPn} A.~Lubotzky,
{\em Finiteness properties and profinite completions},  
Bull. Lond. Math. Soc. {\bf{46}} (2014), 103--110. 
 
 
\bibitem{LubSeg} A.~Lubotzky and D.~Segal,
``Subgroup Growth",  Prog. Math., vol 212, Birkh\"{a}user Verlag,  Basel,  2003. 

 

\bibitem{luck:gafa} W.~L\"{u}ck, {\em  Approximating $L^2$-invariants by their finite-dimensional analogues},
Geom. Funct. Anal.    {\bf{4}} (1994),  455--481.


\bibitem{MR} C.~Maclachlan and A.~W.~Reid,  ``The Arithmetic of Hyperbolic
3-Manifolds", Graduate Texts in Math. , vol.~219, Springer-Verlag, Berlin, 2003.



\bibitem{malcev} A. Malcev,
{\em On isomorphic matrix representations of infinite groups}, 
Rec~Math.[Mat.~Sbornik] N.S.,  Ser.{\bf{8}}.50 (1940), 405--422.

\bibitem{meskin} S.~Meskin,  
{\em A Finitely generated residually finite group with an unsolvable word problem},
Proc.   Amer.   Math.   Soc.   {\bf 43} (1974),   8--10.  

 

\bibitem{mihailova}
K.A. Mihailova, {\em The occurrence problem for direct products of groups},
Dokl. Akad. Nauk SSSR {\bf 119} (1958),  1103--1105.

\bibitem{miller} C.F.~Miller~III, 
``On group-theoretic decision problems and their classification",
Ann. Math. Studies,
No. 68, Princeton University Press, NJ, 1971.


\bibitem{nek}
V.~Nekrashevych,
{\em An uncountable family of 3-generated groups with isomorphic profinite completions,} 
Internat.~J.~Algebra Comput.  {\bf{24}} (2014), 33--46.

\bibitem{NS} N.~Nikolov and D.~Segal, {\em On finitely generated profinite 
groups.~I. Strong completeness and uniform bounds},  Annals of Math.~{\bf 165}
(2007), 171--238. 

\bibitem{probs} G.A.~Noskov, V.N.~Remeslennikov and V.A.~Romankov,
{\em{Infinite Groups}} (Russian).  Algebra, Geometry, Topology  vol 17 (1979), Akad. Nauk SSSR,
Vsesoyuz. Inst. Nauchn. i Tekn. Informatsii,  Moscow,  pp. 65--157. 


\bibitem{novikov} P.S. Novikov, {\em 
On the algorithmic unsolvability of the word problem in
group theory}, Trudy Mat. Inst. Steklov {\bf 44} (1955) 1--143.



\bibitem{Oll}
Y. ~Ollivier, {\em Some small cancellation properties of random groups}, 
Intl. J. Alg. Comput.  {\bf{17}} (2007),  37--51.

\bibitem{OS} 
A.Yu. Ol’shanskii and  M.V. Sapir,
{\em A 2-generated,  2-related group with no non-trivial finite quotients},
Alg.~Discr.~Math.  {\bf 2} (2007),  111--114.


\bibitem{pickel}  P.F. ~Pickel, {\em Finitely generated nilpotent groups with isomorphic quotients}, 
Trans.~Amer.~Math.~Soc. {\bf 160} (1971), 327--341.


 \bibitem{piwek} P.~Piwek, {\em Profinite rigidity properties of central extensions of $2$-orbifold groups}, 
 Alg.~Geom.~Topol. (to appear), {\tt{arXiv.2304.01105}}.
   
\bibitem{PT} V.P.~Platonov and O.I.~Tavgen, {\em Grothendieck's problem on profinite completions and representations of groups}, K-Theory {\bf 4} (1990), 89--101.

\bibitem{pyber}
L.~Pyber, 
{\em Groups of intermediate subgroup growth and a problem of Grothendieck,} Duke
Math.~J.  {\bf{121}} (2004), 169--188.

\bibitem{PW} P. Przytycki and D. T. Wise,  {\em Separability of embedded surfaces in 3–manifolds}, 
Compos.~Math. , {\bf{50}} (2014),  1623--1630.

\bibitem{PW1} P.~Przytycki and D.T.~Wise, {\em Mixed 3-manifolds are virtually special}, 
J.~Amer.~Math.~Soc.,  {\bf{31}} (2018),  319--347.


\bibitem{rabin} M.O. ~Rabin, {\em Recursive unsolvability of group theoretic problems},  Annals of Math {\bf{67}} (1958), 172--194.



\bibitem{reid:stA} A.W.~Reid, \emph{Profinite properties of discrete groups}, in Groups St Andrews 2013, LMS Lecture Note Ser. {\bf 422},  pp.~73--104,  Cambridge Univ. Press (2015). 


\bibitem{reid:icm} A.W.~Reid, \emph{Profinite rigidity},  Proc. ICM, Rio de Janeiro $2018$, 
Vol 1,  1193--1216, World Sci. Publ., Hackensack, NJ, 2018.


\bibitem{reid:kias} A.W.~Reid, \emph{Profinite rigidity, Grothendieck pairs and
low-dimensional topology},  KIAS  Lecture Note Series, to appear.

\bibitem{remes} V.N.~Remeslennikov,
{\em Conjugacy of subgroups in nilpotent groups,} Algebra i Logika Sem.
{\bf 6} (1967),  61--76. (Russian)

\bibitem{remy} B.~R\'{e}my  
{\em G\'{e}om\'{e}trie des groupes et compl\'{e}tion profinie,  d'apr\`{e}s Martin Bridson,   Alan Reid {et al.}},
S\'{e}minaire Bourbaki, expos\'{e}e 1221,  Ast\'{e}rique, vol.~454.  Soc.~Math.~France, Paris 2024, pp.~429--465. 


\bibitem{RZ} L.~Ribes and P.A.~Zalesskii,  ``Profinite Groups", 
Ergeb.~der Math.~{\bf 40}, Springer-Verlag, Berlin, 2000.
 
 
\bibitem{rips}
E. Rips,  {\em Subgroups of small cancellation groups},
  Bull. London Math Soc. {\bf 14} (1982), 45--47. 
  
\bibitem{sageev} 
M.~Sageev, 
{\em CAT$(0)$ cube complexes and groups},
in ``Geometric group theory", 7--54, IAS/Park City Math. Ser., 21, Amer. Math. Soc., Providence, RI, 2014.

  
\bibitem{polyC} G.~Sabbagh and J.S.~Wilson, 
{\em Polycyclic groups, finite images, and elementary equivalence, }
Archiv der Math.  {\bf{57}} (1991),  221--227.
  
\bibitem{schwer} P.~Schwer,
``CAT(0) Cube Complexes:
An Introduction",    Lect. Notes Math.,  vol.~2324,  Springer Verlag, Berlin, 2023.

  
\bibitem{scott} G.P.~Scott,  {\em The geometries of $3$-manifolds}, Bull. London Math. Soc. {\bf 15} (1983), 401--487.

\bibitem{serre-trees}  J-P.~Serre,  ``Trees", Springer Verlag, Berlin, 1980.


\bibitem{serre:CG} J-P. Serre,  ``Cohomologie Galoisienne", Fifth edition. Lect. Notes 
Math., vol.~5, Springer-Verlag, Berlin, 1994.


\bibitem{serre1964} J-P. Serre,  {\em Exemples de vari{\'e}t{\'e}s projectives conjugu{\'e}es non hom{\'e}omorphes}, 
C.R. Acad. Sci., {\bf{258}} (1964),  4194--4196.
 
 
\bibitem{serre1974} J-P. Serre,  {\em Problems},  in Proc. ~Conf. ~Canberra
(1973),  Lecture Notes in Math., vol 72,  pp. ~734--735, Springer Verlag, Berlin,  1974.
 
\bibitem{slobo}
A.M.~Slobodsko{\u\i}.
\newblock Undecidability of the universal theory of finite groups.
\newblock {\em Algebra i Logika}, 20.2 (1981),   207--230.



\bibitem{spitler} R.~Spitler, {\em Profinite Completions and Representations of Finitely Generated Groups}, PhD thesis, Purdue University, 2019.
 
\bibitem{stall} J.R.~Stallings,
\newblock{\em A finitely presented group whose 3-dimensional integral homology is not finitely generated},
\newblock Amer. J. Math. {\bf 85} (1963), 541--543.


\bibitem{stall-graphs} J.R.~Stallings,  {\em Topology of finite graphs}, 
Invent. Math. {\bf{71}} (1983),  551--565.



\bibitem{stebe}
P. ~Stebe,
{\em Conjugacy separability of groups of integer matrices},
Proc. Amer. Math. Soc. {\bf{32}} (1972), 1--7.

\bibitem{stover1} 
M.~Stover,
\emph{Lattices in ${\rm{PU}}(n,1)$ that are not profinitely rigid},
Proc. Amer. Math. Soc. {\bf{147}} (2019), 5055--5062.


\bibitem{stover2} 
M.~Stover, {\em Algebraic fundamental groups of fake projective planes},
J.~Eur.~Math.~Soc. ~(Online first, 2024),  DOI 10.4171/JEMS/1536.

\bibitem{Sun} H.~Sun, {\em All finitely generated $3$-manifold groups are Grothendieck rigid},  
Groups Geom.~Dyn.  {\bf{17}} (2023),  {385--402}.

\bibitem{wilkes:sfs} G. Wilkes, {\em Profinite rigidity for Seifert fibre spaces}, Geom. Dedicata {\bf 188} (2017), 141--163.
 
\bibitem{wilkes-jsj} G. Wilkes, {\em Profinite rigidity of graph manifolds and JSJ decompositions of 3-manifolds}, 
J.~Alg.  {\bf{508}}(2018), 538--587.

\bibitem{wilkesJPAA} G. Wilkes, {\em Relative cohomology theory of profinite groups},
J.~Pure.~Appl.~Alg.  {\bf{223}} (2019),   1617--1688.


\bibitem{wilkesNZ} G. Wilkes, {\em Profinite completions, cohomology and JSJ decompositions of compact 3-manifolds},  New Zealand J.~Math.  {\bf{48}} (2018),  101--113.



\bibitem{wilkes-book} G. Wilkes,  ``Profinite groups and residual finiteness",  EMS Press, Berlin, 2024.
 

\bibitem{W:if} H. Wilton, {\em The congruence subgroup property for mapping class groups and the residual finiteness of hyperbolic groups},  with an appendix by A.~Sisto, {\tt{arXiv:2410.00556}}. 

\bibitem{WZ1} H. Wilton and P. A. Zalesskii, {\em Distinguishing
geometries using finite quotients},  
Geom. ~Topol. {\bf{21}} (2017), 345--384.
 

\bibitem{WZ3} H. Wilton and P. A. Zalesskii, {\em Profinite detection of 3-manifold decompositions}, 
Compos.~Math.  {\bf{155}} (2019),  246--259.

\bibitem{wise:qc}
D.~T.~Wise,  ``The structure of groups with a quasiconvex hierarchy",  
Ann. Math. Studies,
No. 209, Princeton University Press,  NJ,  2021.  
 
\bibitem{wise-invent}
D.T.~Wise,
{\em The residual finiteness of negatively curved polygons of finite groups},
Invent. Math.  {\bf{149}} (2002),  579--617.
 
\bibitem{wise:omni}
D.T.~Wise,
{\em Subgroup separability of graphs of free groups with cyclic edge
  groups},  Quart.~J.~Math {\bf{51}} (2000),  107--129.


\bibitem{wise:rf}
D. Wise, \emph{A residually finite version of Rips's
construction},
Bull. London Math. Soc. {\bf 35} (2003), 23-29.

\bibitem{wise-sc} D.~T.~Wise,
\newblock {\em Cubulating small cancellation groups},
\newblock GAFA, {\bf 14} (2004), 150--214. 
 

\bibitem{xu}
Xiaoyu Xu,
{\em  Profinite almost rigidity in 3-manifolds},  {\tt{aXiv:2410.16002}}.




\end{thebibliography}
\end{document}